\newtheoremstyle{mio}%
{}{} 
{\itshape}{} 
{\bfseries}{.}{ } 
{#1 #2\thmnote{~\mdseries(#3)}} 
\theoremstyle{mio}
\newtheorem{teor}{Theorem}[section]
\newtheorem{cor}[teor]{Corollary}
\newtheorem{prop}[teor]{Proposition}
\newtheorem{lemma}[teor]{Lemma}
\newtheorem{defin}[teor]{Definition}
\newtheoremstyle{definition2}%
{}{} 
{}{} 
{\bfseries}{.}{ } 
{#1 #2\thmnote{\mdseries~ #3}} 
\theoremstyle{definition}
\newtheorem{ex}[teor]{Example}
\newtheorem{oss}[teor]{Remark}
\title{Radical factorization in higher dimension}
\author{Dario Spirito}
\date{\today}
\address{Dipartimento di Scienze Matematiche, Informatiche e Fisiche, Universit\`a degli Studi di Udine, Udine, Italy}
\email{dario.spirito@uniud.it}
\subjclass[2020]{13F05; 13F20; 13C20}
\keywords{Pr\"ufer domains; invertible ideals; radical factorization; SP-domains; free groups; divisorial ideals; integer-valued polynomials; ideal functions.}
\newcommand{\marginparr}[1]{{\color{blue}{{\large$\bullet$}}}\marginpar{\footnotesize{\texttt{#1}}}}
\newcommand{\inscrit}{\mathrm{Crit}}
\newcommand{\Jac}{\mathrm{Jac}}
\newcommand{\V}{\mathcal{V}}
\newcommand{\D}{\mathcal{D}}
\newcommand{\mm}{\mathfrak{m}}
\newcommand{\Int}{\mathrm{Int}}
\newcommand{\Inv}{\mathrm{Inv}}
\newcommand{\Div}{\mathrm{Div}}
\newcommand{\cl}{\mathrm{cl}}
\newcommand{\insfunct}{\mathcal{F}}
\newcommand{\inscont}{\mathcal{C}}
\newcommand{\critx}[1]{\phantom{}^{#1}\inscrit}
\newcommand{\bcrit}{\critx{\omega}}
\newcommand{\inverse}{\mathrm{inv}}
\newcommand{\supp}{\mathrm{supp}}
\newcommand{\Min}{\mathrm{Min}}
\newcommand{\insbound}{\insfunct_b}
\newcommand{\funcontcomp}{\mathcal{C}_c}
\begin{document}

\begin{abstract}
We generalize the theory of radical factorization from almost Dedekind domain to strongly discrete Pr\"ufer domains; we show that, for a fixed subset $X$ of maximal ideals, the finitely generated ideals with $\V(I)\subseteq X$ have radical factorization if and only if $X$ contains no critical maximal ideals with respect to $X$. We use these notions to prove that in the group $\Inv(D)$ of the invertible ideals of a strongly discrete Pr\"ufer domains is often free: in particular, we show it when the spectrum of $D$ is Noetherian or when $D$ is a ring of integer-valued polynomials on a subset over a Dedekind domain.
\end{abstract}

\maketitle

\section{Introduction}
Let $D$ be an integral domain. An ideal $I$ of $D$ is said to have \emph{radical factorization} if $I$ can be written as a finite product of radical ideals, and $D$ is said to be an \emph{SP-domain} if every ideal has radical factorization. Radical factorization is a generalization of prime factorization and, consequently, SP-domains are a generalization of Dedekind domain; nevertheless, SP-domains are rather close to Dedekind domains, since every SP-domain is an \emph{almost Dedekind domain}, i.e., it is locally a Dedekind domain or, equivalently, it is locally a discrete valuation ring (DVR).

An interesting feature of SP-domains is that their invertible ideals can be represented by continuous functions from the maximal space (endowed with the inverse topology, see below) to $\insZ$ (cfr. \cite[Theorem 5.1]{HK-Olb-Re} and \cite[Corollary 4.2]{SP-scattered}); more generally, using SP-domains as a starting point, is it possible to prove that, if $D$ is an almost Dedekind domain, the group $\Inv(D)$ of the invertible ideals of $D$ is isomorphic to a direct sum of group of continuous functions, and in particular it is free (\cite[Theorem 5.9]{SP-scattered} and \cite[Proposition 5.3]{bounded-almded}). This result suggested the conjecture, advanced in \cite{inv-free}, that $\Inv(D)$ is free for all strongly discrete Pr\"ufer domains $D$; the aforementioned result about almost Dedekind domains represents the one-dimensional case of the conjecture. The aim of this paper is to extend the theory of radical factorization and the methods used in the almost Dedekind domain case to domains of higher dimensions.

Since an SP-domain is an almost Dedekind domain, it has dimension $1$. However, if $V$ is a valuation domain with principal maximal ideal $\mm$, then every ideal $I$ such that $\dim(V/I)=0$ (i.e., every ideal having $\mm$ as minimal prime) is in the form $\mm^n$, and thus in particular has radical factorization; that is, $V$ has the SP-property if we restrict to ideals of coheight $0$. The main idea of this paper is to generalize this idea by focusing on finitely generated ideals whose associated closed set $\V(I)$ is contained in a subset $X$ of the maximal space $\Max(D)$. We show that, in this setting, it is possible to generalize the notions used in the theory of almost Dedekind domains to study radical factorization: in particular, it makes sense to define the notion of critical ideals (relative to $X$), and the domain $D$ has the SP-property relative to $X$ (i.e., every ideal $I$ with $\V(I)\subseteq X$ has radical factorization) if and only if $X$ contains no critical ideals.

More generally, we focus on the group $\Inv_X(D)$ of the fractional ideals whose support is contained in $X$ (the support is a generalization of the closed set $\V(I)$ associated to an ideal; see Definition \ref{def:support}): these groups can be seen as a far-reaching generalization of the group of invertible unitary ideals used in the study of the ring of integer-valued polynomials and its generalizations (see \cite[Chapter VIII]{intD} and \cite{locpic}). To study $\Inv_X(D)$ and to obtain from groups of this kind information about the full group $\Inv(D)$ of the invertible ideals of $D$, we use two transfinite constructions.

To study $\Inv_X(D)$, we use the same method used in \cite{SP-scattered} and \cite{bounded-almded} in the context of almost Dedekind domains to set up a chain of subsets of $X$, associated to a chain of overrings of $D$, such that at every step of the chain we isolate the ideals with radical factorization. In particular, we obtain that, if $X\subseteq\Max(D)$ is open with respect to the inverse topology, $\Inv_X(D)$ can be written as a direct sum of a family of groups of continuous functions from subsets of $X$ to $\insZ$, and thus it is free (Theorem \ref{teor:Invd}); if $X$ has no critical ideals, $\Inv_X(D)$ is isomorphic to the group of continuous functions with compact support from $X$ to $\insZ$ (Corollary \ref{cor:invX-sp}).

To pass from $\Inv_X(D)$ to $\Inv(D)$, we show that their quotient is isomorphic to the group $\Inv(\Omega(X))$ of invertible ideals of the Nagata transform $\Omega(X)$ of $X$, which is an overring of $D$; we then introduce the concept of a ring whose spectrum is \emph{compactly layered} in order to apply the same reasoning to $\Omega(X)$ and to a family of overrings obtained inductively. This class is quite general and includes, beyond one-dimensional domains, all Pr\"ufer domains with Noetherian spectrum as well as the ring $\Int(D)$ of integer-valued polynomials when $D$ is a Dedekind domain with finite residue fields. We show that, under this hypothesis, $\Inv(D)$ can be expressed as a direct sum of groups $\Inv_{X_\alpha}(\Omega(Y_\alpha))$, where the $X_\alpha$s and the $Y_\alpha$s are subsets of $\Spec(D)$: it follows that, for strongly discrete Pr\"ufer domains with compactly layered spectrum, the group $\Inv(D)$ is free.

We also study divisorial ideals: in particular, we show that the completion of $\Inv_X(D)$ as an $\ell$-group is the group $\Div_X(D)$ of the $v$-invertible divisorial ideals $I$ such that the support of $I$ and $(D:I)$ is contained in $X$, extending the same result for $\Inv(D)$ and $\Div(D)$ in the case of one-dimensional (or, more generally, completely integrally closed) Pr\"ufer domains \cite[Proposition 3.1]{HK-Olb-Re}. In particular, when $X$ has no critical ideals, we show that $\Div_X(D)$ and $\Div_X(D)/\Inv_X(D)$ are free, as it happens for SP-domains (Proposition \ref{prop:Div-SP}).

\section{Preliminaries}
Throughout the paper, all rings are commutative and unitary, and they will almost always be integral domains. If $D$ is an integral domain with quotient field $K$, an \emph{overring} of $D$ is a ring $T$ such that $D\subseteq T\subseteq K$. We denote by $\Over(D)$ the set of overrings of $D$.

\subsection{Fractional and invertible ideals}
Let $D$ be an integral domain with quotient field $K$. A \emph{fractional ideal} of $D$ is a $D$-submodule $I$ of $K$ such that $dI\subseteq D$ for some $d\in D$, $d\neq 0$; a fractional ideal contained in $D$ is just an ideal of $D$ (and we sometimes call them \emph{integral} ideals for emphasis).

A fractional ideal $I$ is an \emph{invertible ideal} if there is a fractional ideal $J$ such that $IJ=D$; in this case, $J$ is equal to the conductor
\begin{equation*}
I^{-1}:=(D:I):=\{x\in K\mid xI\subseteq D\}.
\end{equation*}
Every invertible ideal is finitely generated. The set of invertible ideals is a group under product, denoted by $\Inv(D)$.

If $I$ is a fractional ideal, the $v$-closure of $I$ is $I^v:=(D:(D:I))$; equivalently, $I^v$ is equal to the intersection of all principal fractional ideals containing $I$. An ideal $I$ is \emph{divisorial} if $I=I^v$. An ideal $I$ is \emph{$v$-invertible} if $(IJ)^v=D$ for some fractional ideal $J$, or equivalently if $(I:I)=D$ \cite[Proposition 34.2]{gilmer}. The set $\Div(D)$ of $v$-invertible divisorial ideals is a group under the ``$v$-product'' $I\times_vJ:=(IJ)^v$ (this follows, for example, from the general case of star operations: see \cite{jaffard_systeme,griffin_vmultiplication_1967,zafrullah_tinvt,halterkoch_libro}).

\subsection{Valuation and Pr\"ufer domains}
Let $V$ be an integral domain with quotient field $K$. Then, $V$ is said to be a \emph{valuation domain} if, for every $x\in K$, at least one of $x$ and $x^{-1}$ is in $V$; to each valuation domain can be associated a surjective map $v:K\setminus\{0\}\longrightarrow\Gamma_v$, where $\Gamma_v$ is a totally ordered group, such that $v(x+y)\geq\min\{v(x),v(y)\}$ and $v(xy)=v(x)+v(y)$. If $\Gamma_v\simeq\insZ$, then $V$ is said to be a \emph{discrete valuation domain} or a \emph{discrete valuation ring} (DVR).

An integral domain $D$ is a \emph{Pr\"ufer domain} if it is locally a valuation domain, i.e., if $D_P$ is a valuation domain for every $P\in\Spec(D)$. Among the many characterizations of Pr\"ufer domains (see e.g. \cite[Chapter IV]{gilmer} or \cite[Theorem 1.1.1]{fontana_libro}), we have that $D$ is a Pr\"ufer domain if and only if every finitely generated fractional ideal is invertible.

A Pr\"ufer domain $D$ is \emph{strongly discrete} if no nonzero prime ideal of $D$ is idempotent, i.e., if $P\neq P^2$ for all $P\in\Spec(D)$, $P\neq(0)$. Equivalently, $D$ is strongly discrete if and only if $PD_P$ is principal (as an ideal of $D_P$) for all $P\in\Spec(D)$ \cite[Section 5.3]{fontana_libro}.

\subsection{$\ell$-groups}
An $\ell$-group is an ordered group whose order structure is a lattice. We refer to \cite{darnel-lgroups} for results about $\ell$-groups.

A subgroup $H$ of an $\ell$-group $G$ is \emph{convex} if when $a<b<c$ and $a,c\in H$, then also $b\in H$. An $\ell$-group $G$ is \emph{archimedean} if, whenever $a,b\in G$ satisfy $na\leq b$ for all $n\inN$, then $a\leq 0$.

An $\ell$-group is \emph{complete} if any subset that is bounded above has a supremum (equivalently, if any subset that is bounded below has an infimum). Every $\ell$-group $G$ has a completion, i.e., there is a complete $\ell$-group $\widehat{G}$ such that $G$ is a dense subgroup of $\widehat{G}$; the completion $\widehat{G}$ is the minimal complete $\ell$-group containing $G$, in the sense that every map from $G$ to a complete $\ell$-group $H$ factors through $\widehat{G}$.

If $D$ is a Pr\"ufer domain, the sum and the intersection of two invertible ideals is invertible, and thus the group $\Inv(D)$ is an $\ell$-group when the order is the reverse containment. If $D$ is one-dimensional, or more generally if $D$ is completely integrally closed, $\Inv(D)$ is archimedean, and $\Div(D)$ is the completion of $\Inv(D)$ \cite[Proposition 3.1]{HK-Olb-Re}.

\subsection{Groups of function}
An abelian group $G$ is \emph{free} if it has a basis, i.e., if there is an $E\subset G$ such that every element of $G$ can be written uniquely as a sum of elements of $E$; equivalently, $G$ is free if it is isomorphic to a direct sum of cyclic infinite groups. Every subgroup of a free group is free.

Let $X$ be a set. We denote by $\insfunct(X,\insZ)$ the set of functions from $X$ to $\insZ$ and by $\insfunct_b(X,\insZ)$ the set of bounded functions from $X$ to $\insZ$. Both these sets are groups under the componentwise sum of functions, and $\insfunct_b(X,\insZ)$ is free.

A subgroup $G\subseteq\insfunct_b(X,\insZ)$ is a \emph{Specker group} if, for every $f\in G$, the characteristic function $\chi_{f^{-1}(n)}$ belongs to $G$ for every $n\in\insZ$ \cite[\textsection 1]{nobeling}. If $G$ is a Specker group, then $G$ is a direct summand of $\insfunct_b(X,\insZ)$, i.e., there is a subgroup $H$ such that $\insfunct_b(X,\insZ)=G\oplus H$; in particular, the quotient $\insfunct_b(X,\insZ)/G$ is free \cite[Satz 2]{nobeling}.

If $X$ is a topological space, we denote by $\inscont(X,\insZ)$ the group of continuous functions $X\longrightarrow\insZ$, where $\insZ$ is endowed with the discrete topology; we denote by $\inscont_c(X,\insZ)$ the subgroup of continuous function with compact support, where the \emph{support} $\supp(f)$ of $f$ is the closure of its cozero set $f^{-1}(\insZ\setminus\{0\})$. Every continuous function of compact support is bounded, and thus $\inscont_c(X,\insZ)$ is free, being a subgroup of $\insfunct_b(X,\insZ)$.

The groups $\inscont(X,\insZ)$ and $\inscont_c(X,\insZ)$ are $\ell$-groups, where $f\leq g$ if $f(x)\leq g(x)$ for every $x\in X$. When $X$ is an Hausdorff regular space, their completions are, respectively, the groups $\inscont(E_X,\insZ)$ and $\inscont_c(E_X,\insZ)$, where $E_X$ is the \emph{Gleason cover} of $X$ (see \cite{strauss-extremallydisconnected} for the definition and construction of the Gleason cover and \cite[Lemma 5.2 and Theorem 5.3]{HK-Olb-Re} and \cite[Lemma 6.4]{SP-scattered} for the group-theoretic completion).

\subsection{Topologies on the spectrum}
Let $R$ be a commutative ring. The Zariski topology on the spectrum $\Spec(R)$ of $R$ is the topology whose closed sets are the sets $\V(I):=\{P\in\Spec(R)\mid I\subseteq P\}$, as $I$ ranges among the ideals of $R$.

Given $X\subseteq\Spec(R)$, we denote by $X^\uparrow$ the closure under specialization of $X$, i.e., $X^\uparrow:=\{P\in\Spec(R)\mid Q\subseteq P$ for some $Q\in X\}$; likewise, the closure under generization of $X$ is $X^\downarrow:=\{P\in\Spec(R)\mid Q\supseteq P$ for some $Q\in X\}$.

The \emph{inverse topology} on $\Spec(R)$ is the topology for which a subbasis is the family of the sets $\V(I)$, as $I$ ranges among the finitely generated ideals of $R$. The closed sets of $\Spec(R)$ with respect to the inverse topology are exactly the subsets $X$ that are closed by generization (i.e., $X=X^\downarrow$) and compact with respect to the Zariski topology. We use $\cl_\inverse(X)$ to denote the closure of $X\subseteq\Spec(R)$ with respect to the inverse topology.

The \emph{constructible topology} on $\Spec(R)$ is the topology generated by the Zariski and the inverse topology (i.e., the coarsest topology that is finer both of the Zariski and the inverse topology). 

The spectrum of $R$ is compact with respect to all three topologies; under the constructible topology, it is also Hausdorff.

The inverse and the constructible topology agree on the maximal space $\Max(R)$ \cite[Corollary 4.4.9(i)]{spectralspaces-libro}; moreover, if $X=\V(I)\subseteq\Max(R)$ for some ideal $I$, then these two topology agree on $X$ also with the Zariski topology, since the three topologies agree for spectra of dimension $0$ (see \cite[Chapter 1]{spectralspaces-libro}).

\section{Invertible ideals with given support}
In this section, we introduce and study the basic properties of the group $\Inv_X(D)$ for some $X\subseteq\Spec(D)$, which will be the main tool of our paper. To do so, we need to associate to each subset of the spectrum an overring.

\begin{defin}
Let $D$ be a domain and let $X\subseteq\Spec(D)$. The \emph{Nagata transform} of $D$ with respect to $X$ is
\begin{equation*}
\Omega(X):=\bigcap\{D_P\mid P\in\Spec(D)\setminus X^\uparrow\}.
\end{equation*}
\end{defin}

\begin{oss}
The terminology ``Nagata transform'' is chosen to generalize the notion of Nagata transform $\Omega(I)$ of an ideal $I$. Indeed, if $X=\V(I)$ for some ideal $I$, then $\Omega(X)=\Omega(I)$ \cite[Theorem 3.2.2]{fontana_libro}.
\end{oss}

\begin{prop}\label{prop:splitting}
Let $D$ be a Pr\"ufer domain and let $X=X^\uparrow\subseteq\Spec(D)$. Then, the following are equivalent:
\begin{enumerate}[(i)]
\item\label{prop:splitting:compact} $\Spec(D)\setminus X$ is compact;
\item\label{prop:splitting:inverse} $X$ is open, with respect to the inverse topology;
\item\label{prop:splitting:explode} $P\Omega(X)=\Omega(X)$ for every $P\in X$.
\end{enumerate}
\end{prop}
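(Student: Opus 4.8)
The plan is to derive all three equivalences from an analysis of the inverse-topology closure of $Y:=\Spec(D)\setminus X$, which under the standing hypothesis $X=X^\uparrow$ is closed under generization. The equivalence (i)$\Leftrightarrow$(ii) will be immediate from the description of the inverse-closed sets recalled above: $X$ is open in the inverse topology precisely when $Y$ is closed under generization and Zariski-compact, and the first of these two conditions is exactly $X=X^\uparrow$; so, given the hypothesis, (ii) is just a restatement of the compactness of $Y$, i.e.\ of (i). The real content is the equivalence with (iii), and I would obtain it from the following pointwise assertion, proved for arbitrary $P\in\Spec(D)$: one has $P\Omega(X)=\Omega(X)$ if and only if $P\notin\cl_\inverse(Y)$.

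To prove the pointwise assertion, recall first that, since a finite sum of finitely generated ideals is again finitely generated, the sets $\V(I)$ with $I$ finitely generated form a basis (not merely a subbasis) of inverse-open sets. For the ``if'' direction, assume $P\notin\cl_\inverse(Y)$; then there is a finitely generated ideal $I$ with $I\subseteq P$ and $\V(I)\cap Y=\emptyset$, i.e.\ $I\not\subseteq Q$ for every $Q\in\Spec(D)\setminus X$. Since $D$ is a Pr\"ufer domain, $I$ is invertible and $I^{-1}=(D:I)$; and for each such $Q$ the equality $ID_Q=D_Q$ gives, after localization, $(D:I)D_Q=(D_Q:ID_Q)=D_Q$, hence $(D:I)\subseteq D_Q$. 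Therefore $I^{-1}\subseteq\bigcap\{D_Q\mid Q\in\Spec(D)\setminus X\}=\Omega(X)$, and, writing $1\in II^{-1}=D$ as a finite sum $1=\sum_j a_jy_j$ with $a_j\in I\subseteq P$ and $y_j\in I^{-1}\subseteq\Omega(X)$, we get $1\in P\Omega(X)$, that is, $P\Omega(X)=\Omega(X)$. For the ``only if'' direction I argue by contraposition. Suppose $P\in\cl_\inverse(Y)$ but, for contradiction, $P\Omega(X)=\Omega(X)$; then $1=\sum_{i=1}^n p_i\omega_i$ with $p_i\in P$ and $\omega_i\in\Omega(X)$. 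The finitely generated ideal $I:=(p_1,\dots,p_n)$ is contained in $P$, so $\V(I)$ is an inverse-open neighborhood of $P$ and hence meets $Y$: there is $Q\in\Spec(D)\setminus X$ with $I\subseteq Q$. But then $p_i\in QD_Q$ for all $i$, while $\omega_i\in\Omega(X)\subseteq D_Q$ (because $Q\notin X=X^\uparrow$, so $D_Q$ is one of the rings intersected in $\Omega(X)$), whence $1=\sum_i p_i\omega_i\in QD_Q$, contradicting that $QD_Q$ is the proper maximal ideal of $D_Q$.

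Granting the pointwise assertion, condition (iii) says exactly that $P\notin\cl_\inverse(Y)$ for all $P\in X$, i.e.\ $X\cap\cl_\inverse(Y)=\emptyset$; since $Y\subseteq\cl_\inverse(Y)$ always, this amounts to $\cl_\inverse(Y)=Y$, i.e.\ to $Y$ being closed in the inverse topology, which as observed above is equivalent to (i) and to (ii). This completes the argument. I expect the pointwise assertion to be the crux: the only place the Pr\"ufer hypothesis is genuinely used is the ``if'' direction, through the invertibility of the finitely generated ideal $I$ (which simultaneously produces an element of $\Omega(X)$ as $I^{-1}$ and makes $II^{-1}=D$), while the main conceptual step is simply to read the multiplicative condition $P\Omega(X)=\Omega(X)$ as the statement that $P$ can be separated from $\Spec(D)\setminus X$ by a finitely generated ideal.
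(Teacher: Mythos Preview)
Your proof is correct and conceptually identical to the paper's: both establish (i)$\Leftrightarrow$(ii) by the standard description of inverse-closed sets, and both reduce (ii)$\Leftrightarrow$(iii) to the pointwise fact that a prime $P$ survives in $\Omega(X)$ precisely when $P\in\cl_\inverse(Y)$. The only difference is in how that pointwise fact is obtained. The paper invokes an external result (\cite[Corollary 4.10]{fifolo_transactions}, together with the observation that Pr\"ufer domains are vacant and their Zariski space consists of localizations), whereas you prove it from scratch by exploiting the invertibility of finitely generated ideals: a basic inverse-open neighborhood $\V(I)$ of $P$ disjoint from $Y$ yields $I^{-1}\subseteq\Omega(X)$ and hence $1\in II^{-1}\subseteq P\Omega(X)$, and conversely an equation $1=\sum p_i\omega_i$ cannot hold if some $Q\in Y$ contains all the $p_i$. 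Your argument is thus more elementary and self-contained, and it isolates exactly where the Pr\"ufer hypothesis enters (the ``if'' direction, via invertibility of $I$); the paper's version is shorter but relies on machinery developed elsewhere.
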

\begin{proof}
Let $Y:=\Spec(D)\setminus X$.

\ref{prop:splitting:compact} $\iff$ \ref{prop:splitting:inverse} Since $X=X^\uparrow$, we have $Y=Y^\downarrow$ and thus $Y$ is compact if and only if it is closed, with respect to the inverse topology. The equivalence follows. 

\ref{prop:splitting:explode} $\iff$ \ref{prop:splitting:inverse} The prime ideals of $D$ that survive in $\Omega(X)$ are exactly the ones in the closure of $Y$, with respect to the inverse topology (use \cite[Corollary 4.10]{fifolo_transactions}, using the fact that a Pr\"ufer domain is vacant \cite[Remark 2.2]{vacantdomains} and the Zariski space of a Pr\"ufer domain consists of the localizations of $D$). Hence, $X$ is open (i.e., $Y$ is closed) if and only if every $P\in X$ extends to the whole $\Omega(X)$.
\end{proof}

\begin{defin}
Let $D$ be a Pr\"ufer domain. We say that a subset $X\subseteq\Spec(D)$ is a \emph{splitting set} if $X=X^\uparrow$ and if it satisfies one (hence all) conditions of Proposition \ref{prop:splitting}.

We also say that a splitting set is \emph{discrete} if no $P\in X$ is idempotent, i.e., if $P\neq P^2$ for every $P\in X$.
\end{defin}

\begin{defin}\label{def:support}
Let $D$ be an integral domain, and let $I$ be a fractional ideal of $D$. The \emph{support} of $I$ is
\begin{equation*}
\supp(I):=\{P\in\Spec(D)\mid ID_P\neq D_P\}.
\end{equation*}
\end{defin}

Note that, for a proper ideal $I$ of $D$, the support of $I$ is just the closed set $\V(I)$ associated to $I$.

\begin{lemma}\label{lemma:supp-prodotto}
Let $D$ be a Pr\"ufer domain and let $I,J$ be fractional ideals of $D$. Then, $\supp(IJ)\subseteq\supp(I)\cup\supp(J)$.
\end{lemma}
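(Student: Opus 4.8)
The plan is to work locally. Since $D$ is a Pr\"ufer domain, every fractional ideal localizes nicely: for each prime $P$, the localization $D_P$ is a valuation domain, and the ideals of a valuation domain are totally ordered by inclusion. The key point is that the support behaves well under localization, namely $\supp(IJ) = \{P \mid (IJ)D_P \neq D_P\}$ and $(IJ)D_P = (ID_P)(JD_P)$, so it suffices to show that if $(ID_P)(JD_P) \neq D_P$ then either $ID_P \neq D_P$ or $JD_P \neq D_P$.

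First I would record the elementary fact that for fractional ideals of any domain, $(IJ)D_P = (ID_P)(JD_P)$, since localization commutes with finite products of modules. Then I would take a prime $P \notin \supp(I) \cup \supp(J)$, so that $ID_P = D_P$ and $JD_P = D_P$; multiplying gives $(IJ)D_P = D_P D_P = D_P$, hence $P \notin \supp(IJ)$. This is the contrapositive of the desired inclusion, and it actually works for an arbitrary integral domain, not just a Pr\"ufer domain — the Pr\"ufer hypothesis is not strictly needed for this particular lemma, though it is the ambient setting.

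The argument is essentially a one-line localization computation, so there is no real obstacle here; the only thing to be a little careful about is the convention for products of fractional ideals (the product $IJ$ is the $D$-submodule of $K$ generated by all $xy$ with $x \in I$, $y \in J$) and confirming that this commutes with passing to $D_P$, which is standard. One could alternatively note that equality holds for invertible (hence finitely generated) ideals, but since we only need the containment the direct computation is cleanest.

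I would write the proof as: for any prime $P$, one has $(IJ)D_P = (ID_P)(JD_P)$; hence if $P\notin\supp(I)\cup\supp(J)$ then $ID_P=D_P$ and $JD_P=D_P$, so $(IJ)D_P=D_P$ and $P\notin\supp(IJ)$, which gives $\supp(IJ)\subseteq\supp(I)\cup\supp(J)$.
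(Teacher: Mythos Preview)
Your proposal is correct and is essentially the same localization argument as in the paper: one uses $(IJ)D_P=(ID_P)(JD_P)$ and concludes that if both factors equal $D_P$ then so does the product. The paper phrases it directly (if $P\in\supp(IJ)$ then one of the factors is $\neq D_P$) while you phrase it contrapositively, but the content is identical; your remark that the Pr\"ufer hypothesis is not actually needed here is also correct.
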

\begin{proof}
If $P\in\supp(IJ)$, then $D_P\neq(IJ)D_P=(ID_P)(JD_P)$. Thus, at least one of $ID_P$ and $JD_P$ is different from $D_P$, i.e., $P\in\supp(I)\cup\supp(J)$.
\end{proof}

\begin{lemma}\label{lemma:JL-1}
Let $D$ be a Pr\"ufer domain, and let $I$ be a finitely generated fractional ideal of $D$. Then, there are ideals $J,L\subseteq D$ such that $\supp(J),\supp(L)\subseteq\supp(I)$ and $I=JL^{-1}$.
\end{lemma}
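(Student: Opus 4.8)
The plan is to reduce to the case of an integral finitely generated ideal and then extract a suitable denominator. First I would use the definition of fractional ideal: since $I$ is fractional, there is a nonzero $d\in D$ with $dI\subseteq D$, so $J_0:=dI$ is an integral finitely generated ideal of $D$. We have $I=J_0(dD)^{-1}$, but the principal ideal $dD$ need not have support inside $\supp(I)$, so this naive choice does not work directly; the point of the lemma is exactly to replace $dD$ by something with controlled support.

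The key idea is to choose the denominator locally and glue. For each $P\in\supp(I)$, since $D$ is Pr\"ufer, $D_P$ is a valuation domain, and the finitely generated fractional ideal $ID_P$ is principal, say $ID_P=x_PD_P$ with $x_P\in K^\times$; write $x_P=a_P/b_P$ with $a_P,b_P\in D$. Outside $\supp(I)$ we have $ID_P=D_P$, so $I$ behaves like the unit ideal there. The natural candidate is to take $L$ to be an integral ideal generated (after clearing denominators) by elements whose valuations at the primes of $\supp(I)$ match the ``denominator part'' of $I$, and then set $J:=IL$. Concretely, I would look for an ideal $L\subseteq D$ with $\supp(L)\subseteq\supp(I)$ such that $IL$ is again integral with $\supp(IL)\subseteq\supp(I)$; since $L$ is finitely generated (being an ideal with support contained in the closed set $\V(I')$ for an appropriate finitely generated $I'$ — here one uses that $\supp(I)$ is Zariski-closed, being $\V(dI)\cap\V((D:dI))$ or more simply that a finitely generated ideal has closed support), $L$ is invertible, and then $I=J L^{-1}$ with $J=IL$ integral and, by Lemma \ref{lemma:supp-prodotto}, $\supp(J)=\supp(IL)\subseteq\supp(I)\cup\supp(L)\subseteq\supp(I)$.

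To produce $L$, I would proceed as follows. From $dI\subseteq D$ we get $dD\subseteq (D:I)$, so $(D:I)$ is a nonzero integral ideal; it is finitely generated because $D$ is Pr\"ufer and $(D:I)$ is invertible (as $I$ is invertible, being finitely generated over a Pr\"ufer domain), and its support satisfies $\supp((D:I))\subseteq\supp(I)$ since at each $P\notin\supp(I)$ we have $ID_P=D_P$ hence $(D:I)D_P=D_P$. Thus $L:=(D:I)$ does the job: $J:=I L=I(D:I)=D\cap(\text{something})$ — more precisely $I(D:I)$ is an integral ideal (it is contained in $D$ by definition of $(D:I)$), it is finitely generated, and $\supp(J)\subseteq\supp(I)$ by Lemma \ref{lemma:supp-prodotto}. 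Finally $I=JL^{-1}$ because $L=(D:I)$ is invertible with inverse $I^v=I$ (invertible ideals are divisorial), so $JL^{-1}=I(D:I)((D:I))^{-1}=I$.

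The main obstacle is the bookkeeping around finite generation and support: one must be sure that $(D:I)$ is finitely generated (which follows from invertibility over a Pr\"ufer domain) and that both $\supp(J)$ and $\supp(L)$ land inside $\supp(I)$. The inclusion $\supp(L)\subseteq\supp(I)$ is the delicate one, but it is immediate from the local picture — at a prime $P\notin\supp(I)$ the ideal $I$ is locally trivial, hence so is its inverse — while $\supp(J)\subseteq\supp(I)$ is then a direct application of Lemma \ref{lemma:supp-prodotto}. Once these are in place the identity $I=JL^{-1}$ is just the invertibility of $(D:I)$ together with $(I^{-1})^{-1}=I$.
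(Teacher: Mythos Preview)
There is a genuine gap: your choice $L:=(D:I)$ is \emph{not} in general an integral ideal. You write ``from $dI\subseteq D$ we get $dD\subseteq(D:I)$, so $(D:I)$ is a nonzero integral ideal'', but $dD\subseteq(D:I)$ only shows that $(D:I)$ contains a nonzero element of $D$, not that it is contained in $D$. In fact, whenever $I$ is a proper integral ideal of $D$ (say $I=M$ a maximal ideal), one has $(D:I)=I^{-1}\supsetneq D$, so $L\not\subseteq D$. Worse, since $I$ is invertible your $J=I(D:I)$ is just $I\cdot I^{-1}=D$, so the decomposition you obtain is the trivial one $I=D\cdot(I^{-1})^{-1}$, which satisfies the support conditions but not the requirement that $L$ be an ideal of $D$.

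The point you are missing is that one must separate the ``numerator'' and ``denominator'' parts of $I$: the primes where $ID_P\subsetneq D_P$ and those where $ID_P\supsetneq D_P$. The paper does this via the identity $I=(I\cap D)(I+D)$ (valid in any Pr\"ufer domain), taking $J:=I\cap D$ and $L:=(I+D)^{-1}$; since $D\subseteq I+D$ one has $L\subseteq D$, and the support computations are then immediate from the valuation picture. Your approach would work if you replaced $(D:I)$ by $(D:I)\cap D$, but then $J=I\bigl((D:I)\cap D\bigr)$ need not be integral, and you are back to needing the $I\cap D$, $I+D$ decomposition anyway.
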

\begin{proof}
Since $D$ is a Pr\"ufer domain, we have $I=ID=(I\cap D)(I+D)$ \cite[Theorem 25.2(c)]{gilmer}. We have 
\begin{equation*}
\begin{aligned}
\supp(I\cap D)&=\{P\in\Spec(D)\mid (I\cap D)D_P\neq D_P\}=\\
&=\{P\in\Spec(D)\mid ID_P\cap D_P\neq D_P\}=\\
&=\{P\in\Spec(D)\mid ID_P\subsetneq D_P\}\subseteq\supp(I)
\end{aligned}
\end{equation*}
since $D_P$ is a valuation ring. Likewise, $\supp(I+D)=\{P\in\Spec(D)\mid ID_P\supsetneq D_P\}\subseteq\supp(I)$. Hence, both $I\cap D$ and $I+D$ have support inside $\supp(I)$; moreover, $D\subseteq(I+D)$, and thus $L:=(I+D)^{-1}\subseteq D$. Therefore, setting $J:=I\cap D$ we have $I=JL^{-1}$.
\end{proof}

\begin{defin}
Let $D$ be a Pr\"ufer domain and let $X$ be a splitting set of $D$. Then, the set of \emph{$X$-invertible ideals} of $D$ is
\begin{equation*}
\Inv_X(D):=\{I\in\Inv(D)\mid \supp(I)\subseteq X\}.
\end{equation*}
\end{defin}

\begin{prop}\label{prop:kerext}
Let $D$ be a Pr\"ufer domain and let $X$ be a splitting set of $D$. Then, $\Inv_X(D)$ is a convex subgroup of $\Inv(D)$ and
\begin{equation*}
\frac{\Inv(D)}{\Inv_X(D)}\simeq\Inv(\Omega(X)).
\end{equation*}
If also $X\subseteq\Max(D)$, then $\Inv_X(D)$ is archimedean.
\end{prop}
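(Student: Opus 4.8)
The plan is to treat the three assertions separately, the archimedean one being the only one that needs a genuinely new idea.

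First, that $\Inv_X(D)$ is a subgroup: closure under products is Lemma~\ref{lemma:supp-prodotto}, and closure under inverses follows from $\supp(I^{-1})=\supp(I)$, which holds because $I^{-1}D_P=(D_P:ID_P)$ for the finitely generated (hence invertible) ideal $I$, and in the valuation ring $D_P$ the ideal $ID_P$ is the unit ideal exactly when $(D_P:ID_P)$ is. For convexity I would argue pointwise: if $A\supseteq B\supseteq C$ in $\Inv(D)$ with $A,C\in\Inv_X(D)$, then for each $P\notin X$ we get $D_P=AD_P\supseteq BD_P\supseteq CD_P=D_P$, so $BD_P=D_P$, i.e.\ $\supp(B)\subseteq X$. (The same localization shows closure under sums and intersections, so $\Inv_X(D)$ is a sublattice; in any case a convex subgroup of an $\ell$-group is automatically an $\ell$-subgroup.)

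Next, for the quotient I would apply the first isomorphism theorem to the extension map $\varphi\colon\Inv(D)\to\Inv(\Omega(X))$, $I\mapsto I\Omega(X)$. This is well defined and a homomorphism because $\Omega(X)$, being an overring of a Pr\"ufer domain, is Pr\"ufer, so $I\Omega(X)$ is finitely generated and hence invertible; it is surjective because an invertible ideal of $\Omega(X)$ is generated by finitely many $a_1,\dots,a_n\in K$ and so is the image of the invertible $D$-ideal $(a_1,\dots,a_n)D$. The point is the kernel. Since $X=X^\uparrow$, $\Omega(X)=\bigcap_{P\notin X}D_P$; using invertibility, any $x\in\bigcap_{P\notin X}ID_P$ satisfies $xI^{-1}\subseteq(ID_P)I^{-1}=D_P$ for every $P\notin X$, hence $xI^{-1}\subseteq\Omega(X)$ and $x\in xI^{-1}I\subseteq I\Omega(X)$; together with the obvious reverse inclusion this gives $I\Omega(X)=\bigcap_{P\notin X}ID_P$. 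As $\Omega(X)\subseteq D_P$ for every $P\notin X$, we conclude that $I\Omega(X)=\Omega(X)$ if and only if $ID_P=D_P$ for all $P\notin X$, i.e.\ if and only if $\supp(I)\subseteq X$; thus $\ker\varphi=\Inv_X(D)$ and $\Inv(D)/\Inv_X(D)\simeq\Inv(\Omega(X))$.

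Finally, suppose $X\subseteq\Max(D)$, let $A,B\in\Inv_X(D)$ satisfy $A^n\supseteq B$ for all $n\geq1$, and assume toward a contradiction that $D\not\subseteq A$, equivalently $A\cap D\subsetneq D$. Choose $P\in\V(A\cap D)$; by the computation in the proof of Lemma~\ref{lemma:JL-1}, $\V(A\cap D)=\supp(A\cap D)\subseteq\supp(A)\subseteq X\subseteq\Max(D)$, so $P$ is maximal. Write $v=v_P$ and, for a finitely generated fractional ideal $C$, set $v(C)=\min\{v(c):0\neq c\in C\}$, so that $CD_P=\{x\in K:v(x)\geq v(C)\}$, $v$ is additive on products, order-reversing on inclusions, and $v(A^n)=nv(A)$. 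From $A\cap D\subseteq P$ we get $AD_P=(A\cap D)D_P\subseteq PD_P$, so $\alpha:=v(A)>0$. Here the hypothesis $X\subseteq\Max(D)$ enters: $\supp(B\cap D)\subseteq\supp(B)\subseteq\Max(D)$, so the only prime of $D_P$ that can contain the ideal $(B\cap D)D_P$ is $PD_P$; hence $\sqrt{(B\cap D)D_P}$ is $D_P$ if $(B\cap D)D_P=D_P$ and is $PD_P$ otherwise, and in either case a generator $t$ of the principal ideal $AD_P=tD_P$ — which has $v(t)=\alpha>0$ — lies in $\sqrt{(B\cap D)D_P}$. Therefore $t^{n_0}\in(B\cap D)D_P$ for some $n_0\geq1$, that is, $n_0\alpha\geq v(B\cap D)\geq v(B)$. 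On the other hand $A^{n_0+1}\supseteq B$ gives $(n_0+1)\alpha=v(A^{n_0+1})\leq v(B)$, so $(n_0+1)\alpha\leq v(B)\leq n_0\alpha$ and $\alpha\leq0$, a contradiction. Hence $D\subseteq A$, i.e.\ $A\leq0$ in the reverse-containment order, and $\Inv_X(D)$ is archimedean.

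I expect the last paragraph to be the only real obstacle: since the value group of $D_P$ may be non-archimedean, archimedeanity of $\Inv_X(D)$ cannot come from localization alone, and the crux is to see that $X\subseteq\Max(D)$ forces the values of $B\cap D$ into the part of the value group of $v_P$ that is archimedean relative to $v_P(A)$ — with the degenerate case $(B\cap D)D_P=D_P$ needing separate (trivial) attention.
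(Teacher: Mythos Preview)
Your proof is correct. The treatment of the subgroup and convexity assertions is essentially the paper's, and for the quotient you replace the paper's appeal to Proposition~\ref{prop:splitting} by a direct computation of $I\Omega(X)=\bigcap_{P\notin X}ID_P$ via invertibility of $I$; both are short and equivalent.

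The archimedean argument is where you genuinely diverge. The paper works globally: from $J\subseteq I^n$ it uses the identity $I^n\cap D=(I\cap D)^n$ (a known Pr\"ufer fact) to get $J\cap D\subseteq\bigcap_n P^n=:Q$, and then invokes that $Q$ is a prime strictly below $P$, so $Q\in\supp(J)\setminus\Max(D)$, contradicting $\supp(J)\subseteq X\subseteq\Max(D)$. You instead localize at a maximal $P\in\V(A\cap D)$ and argue inside the value group: the hypothesis $\supp(B\cap D)\subseteq\Max(D)$ forces $(B\cap D)D_P$ to be $PD_P$-primary (or the unit ideal), so some power of a generator of $AD_P$ lands in it, which bounds $v(B)$ above by a finite multiple of $\alpha=v(A)$ and collides with $v(B)\geq (n_0+1)\alpha$. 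Your route avoids the two external citations and makes transparent the reason the argument works even when $\Gamma_{v_P}$ is non-archimedean; the paper's route is shorter once those facts are available and yields the slightly stronger intermediate statement that $J\cap D$ actually lies in a prime strictly below $P$. One small point worth making explicit in your write-up: the equality $AD_P=(A\cap D)D_P$ that you use follows because $(A\cap D)D_P=AD_P\cap D_P$ and the valuation-ring dichotomy, together with $P\in\V(A\cap D)$, forces $AD_P\subsetneq D_P$.
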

\begin{proof}
If $I,J\in\Inv_X(D)$, then $\supp(IJ)\subseteq\supp(I)\cup\supp(J)\subseteq X$ by Lemma \ref{lemma:supp-prodotto}, and thus $IJ\in\Inv_X(D)$. Moreover, if $I\in\Inv_X(D)$ then $\supp(I^{-1})=\supp(I)\subseteq X$, and thus $\Inv_X(D)$ is stable by taking inverse. Hence $\Inv_X(D)$ is a subgroup of $\Inv(D)$.

Suppose $I\subseteq J\subseteq L$ and $I,L\in\Inv_X(D)$. If $P\in\supp(J)$, then $JD_P\neq D_P$ and thus either $JD_P\subsetneq D_P$ or $D_P\subsetneq JD_P$: in the former case $P\in\supp(I)\subseteq X$, in the latter $P\in\supp(L)\subseteq X$. Hence $\supp(J)\subseteq X$ and thus $J\in\Inv_X(D)$, i.e., $\Inv_X(D)$ is convex.

Consider now the extension map $\lambda:\Inv(D)\longrightarrow\Inv(\Omega(X))$; since $D$ is Pr\"ufer, $\lambda$ is surjective. We claim that $\ker\lambda=\Inv_X(D)$. Indeed, we have that $I\Omega(X)=\Omega(X)$ if and only if $P\Omega(X)=\Omega(X)$ for every ideal $P\in\supp(I)$; since $X$ is a splitting set, by Proposition \ref{prop:splitting} this is equivalent to $\supp(I)\subseteq X$. Hence $\ker\lambda=\Inv_X(D)$.

Suppose now that $X\subseteq\Max(D)$: we show that $\Inv_X(D)$ is archimedean. Suppose that $J\subseteq I^n$ for some $I,J\in\Inv_X(D)$. Then, $J\cap D\subseteq I^n\cap D=(I\cap D)^n$ (where the last equality comes from \cite[Theorem 2.2]{gilmer-grams}); if $I\cap D\neq D$, then $I\cap D\subseteq P$ for some prime ideal $P$, and thus $J\cap D\subseteq\bigcap_{n\geq 1}P^n=Q$ is a prime ideal properly contained in $P$ \cite[Theorem 23.3]{gilmer}. Hence $Q\in\supp(J)$; however, $Q$ is not maximal, against the fact that $\supp(J)\subseteq X\subseteq\Max(D)$. Thus $I\cap D=D$, i.e., $D\subseteq I$. Hence $\Inv_X(D)$ is archimedean.

\end{proof}

\section{Divisorial ideals}
In this section, we construct an analogue for divisorial ideals of the group $\Inv_X(D)$. In particular, we show that if $X\subseteq\Max(D)$ the relationship between $\Inv_X(D)$ and $\Div_X(D)$ closely mirrors the relationship that $\Inv(D)$ and $\Div(D)$ have when $D$ is one-dimensional.

\begin{defin}
Let $D$ be a Pr\"ufer domain and let $X$ be a splitting set. We define $\Div_X(D)$ be to the set of all $I\in\Div(D)$ such that $\supp(I),\supp(D:I)\subseteq X$.
\end{defin}

\begin{oss}
The fact that $\supp(I)\subseteq X$ does not imply that $\supp(D:I)\subseteq X$. For example, if $D$ is a valuation domain of dimension $2$ and $\Spec(D)=\{(0)\subset Q\subset P\}$, then $I=D_Q$ is a fractional ideal such that $\supp(D_Q)=\{P\}$, but $(D:D_Q)=Q$ and $\supp(Q)=\{Q,P\}$. However, $D_Q$ is not $v$-invertible (since $(QD_Q)^v=QD_Q$), i.e., $D_Q\notin\Div(D)$; thus it is not clear if the assumption $\supp(D:I)\subseteq X$ can be omitted from the previous definition.
\end{oss}

The following lemma generalizes the fact that, in a one-dimensional Pr\"ufer domain, every divisorial ideal is $v$-invertible (cfr. \cite[Theorem 34.3]{gilmer}).

\begin{lemma}\label{lemma:cic}
Let $D$ be a Pr\"ufer domain. If $I$ is a proper ideal of $D$ such that $\V(I)\subseteq\Max(D)$, then $I$ is $v$-invertible.
\end{lemma}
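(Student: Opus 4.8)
The plan is to show that $(I:I) = D$, which characterizes $v$-invertibility by \cite[Proposition 34.2]{gilmer}. Since $D$ is Pr\"ufer, it suffices to check this locally: I would verify that $(I:I)D_P = (ID_P : ID_P)$ holds for every $P$, using the fact that $I$ is (locally) finitely generated — wait, $I$ need not be finitely generated, so this step needs care. Instead, I would work directly with the containment $x \in (I:I) \subseteq K$ and aim to show $x \in D_P$ for every maximal ideal $P$. For $P \notin \V(I)$ we have $ID_P = D_P$, so $(I:I)D_P \subseteq (D_P : D_P) = D_P$ and there is nothing to prove. The real content is at the maximal ideals $P \in \V(I)$.

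So fix $P \in \V(I) \subseteq \Max(D)$. The key observation is that, by hypothesis, $\V(I) \subseteq \Max(D)$, so $P$ is a \emph{minimal} prime over $I$ as well as maximal; hence $D/I$ has $P/I$ as a minimal prime, and correspondingly $ID_P$ is $PD_P$-primary. In the valuation domain $D_P$, the $PD_P$-primary ideals are linearly ordered and every element of $K \setminus D_P$ has strictly negative value, so an element $x \in K$ with $x \cdot ID_P \subseteq ID_P$ must satisfy $v_P(x) \geq 0$, i.e. $x \in D_P$: if $v_P(x) < 0$ then $v_P(xa) < v_P(a)$ for every $a \in ID_P$, and since $ID_P$ is a proper ideal one can choose $a \in ID_P$ of minimal value (or an element of value arbitrarily close to the infimum) to force $xa \notin ID_P$ — this uses that $ID_P$ is bounded away from $0$ in value precisely because $P$ is the radical of $ID_P$ and $P$ is not the zero ideal. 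More cleanly: because $D$ is Pr\"ufer and $\V(I) \subseteq \Max(D)$, the ring $D_P/ID_P$ is zero-dimensional, so $\sqrt{ID_P} = PD_P$; since $D$ need not be strongly discrete I cannot assume $PD_P$ is principal, but I only need that $PD_P \neq (0)$, which holds as $P \neq (0)$ (indeed $P$ maximal and $D$ a domain, and $I \neq (0)$ forces $P \neq (0)$). Then $(ID_P : ID_P) = D_P$ is the standard fact that a valuation ring is completely integrally closed away from primes that are unions of the smaller ones — here $\bigcap_n (PD_P)^n \subsetneq ID_P$ is not needed; what I need is just that $ID_P \supseteq (PD_P)^n$ for some $n$ if $ID_P$ were finitely generated, but in the non-finitely-generated case I argue: $x \in (ID_P : ID_P)$ implies $x \cdot \sqrt{ID_P} \subseteq \sqrt{ID_P}$, hence $x \cdot PD_P \subseteq PD_P$, hence $v_P(x) \geq 0$ since $PD_P$ is a nonzero ideal of the valuation ring $D_P$ (an element $x$ of negative value sends any nonzero ideal into a strictly larger fractional ideal, leaving $PD_P$). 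Therefore $x \in D_P$.

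Combining the two cases, $(I:I) \subseteq \bigcap_{P \in \Max(D)} D_P = D$, and since always $D \subseteq (I:I)$, we get $(I:I) = D$, so $I$ is $v$-invertible. I expect the main obstacle to be handling the case where $I$ is not finitely generated, so that one cannot simply reduce to a finitely generated ideal and use that $ID_P$ contains a power of $PD_P$; the fix is the radical argument above, observing that $x \cdot ID_P \subseteq ID_P$ forces $x$ to stabilize $\sqrt{ID_P} = PD_P$, and that stabilizing a nonzero ideal of a valuation ring already forces $v_P(x) \geq 0$. A secondary point to be careful about is the passage "$(I:I)$ localizes correctly" — rather than asserting $(I:I)D_P = (ID_P:ID_P)$ in general, I only use the trivial inclusion $(I:I)D_P \subseteq (ID_P:ID_P)$, which holds for any fractional ideal, and that inclusion is all that is needed to run the local argument.
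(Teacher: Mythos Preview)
Your approach is essentially the paper's: show $(I:I)=D$ by checking $(I:I)\subseteq D_P$ at each maximal $P$, splitting into the cases $P\notin\V(I)$ (where $ID_P=D_P$) and $P\in\V(I)$ (where $ID_P$ is $PD_P$-primary in the valuation ring $D_P$). The paper simply asserts $(ID_M:ID_M)=D_M$ in the second case; you supply the extra step through the radical, and you are right to use only the inclusion $(I:I)D_P\subseteq(ID_P:ID_P)$ rather than equality.

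One correction: your parenthetical claim that in a valuation ring an element of negative value sends \emph{any} nonzero ideal to a strictly larger fractional ideal is false. If $Q$ is a nonzero non-maximal prime of a valuation domain $V$, then $Q$ is also an ideal of $V_Q$, so $xQ=Q$ for every $x\in V_Q\setminus V$. What you actually need is only the case of the maximal ideal $PD_P$, and there the argument is immediate and does not require any value-group reasoning: if $x\notin D_P$ then $x^{-1}\in PD_P$, whence $1=x\cdot x^{-1}\in x\,PD_P$, contradicting $x\,PD_P\subseteq PD_P$. With this fix the proof is complete.
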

\begin{proof}
Let $M$ be a maximal ideal of $D$. If $M\notin\V(I)$, then $(ID_M:ID_M)=(D_M:D_M)=D_M$; if $M\in\V(I)$, then $ID_M$ is an ideal of $D_M$ whose minimal prime is the maximal ideal $MD_M$, and thus $(ID_M:ID_M)=D_M$. Therefore,
\begin{equation*}
\begin{aligned}
D\subseteq(I:I)= & \bigcap_{M\in\Max(D)}(I:I)D_M\subseteq\\
\subseteq & \bigcap_{M\in\Max(D)}(ID_M:ID_M)=\bigcap_{M\in\Max(D)}D_M=D,
\end{aligned}
\end{equation*}
and so $I$ is $v$-invertible.
\end{proof}

\begin{lemma}\label{lemma:densita-basso}
Let $D$ be a Pr\"ufer domain and let $X\subseteq\Max(D)$ be a splitting set. If $I$ is a proper ideal of $D$ such that $\V(I)\subseteq X$, then there is a $J\in\Inv_X(D)$ such that $J\subseteq I$.
\end{lemma}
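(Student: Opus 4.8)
The plan is to produce $J$ by taking a suitable finitely generated ideal contained in $I$ and correcting its support. First I would recall that, since $D$ is a Pr\"ufer domain, every finitely generated ideal is invertible, so it suffices to find a finitely generated $J\subseteq I$ with $\supp(J)\subseteq X$. Pick any nonzero $x\in I$; then $J_0:=xD$ is invertible and $J_0\subseteq I$, but its support $\V(x)$ may meet $Y:=\Spec(D)\setminus X$. The key point is that $\V(x)$, being a Zariski-closed set contained in $\V(I)\cup(\text{something outside }\V(I))$, decomposes: the primes of $\V(x)$ that lie in $\V(I)$ are already in $X$ by hypothesis, while the primes of $\V(x)\setminus\V(I)^{\uparrow}$ are contained in $Y^{\uparrow}$... actually $\V(x)\setminus\V(I)$ need not be contained in $Y$, so the correction must be done more carefully.

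The cleaner approach is to use the Nagata transform. Since $X$ is a splitting set, $P\Omega(X)=\Omega(X)$ for every $P\in X$ by Proposition \ref{prop:splitting}\ref{prop:splitting:explode}; equivalently $I\Omega(X)=\Omega(X)$ whenever $\V(I)\subseteq X^{\uparrow}$, and in particular for our $I$ with $\V(I)\subseteq X\subseteq\Max(D)$. Thus $1\in I\Omega(X)$, so there are $a_1,\dots,a_n\in I$ and $s_1,\dots,s_n\in\Omega(X)$ with $\sum a_is_i=1$. Let $J:=(a_1,\dots,a_n)D$. Then $J\subseteq I$, $J$ is finitely generated hence invertible, and $J\Omega(X)=\Omega(X)$ since it already contains the unit $\sum a_is_i$. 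Now I invoke the characterization used in the proof of Proposition \ref{prop:kerext}: for an invertible ideal $J$ of a Pr\"ufer domain with $X$ a splitting set, $J\Omega(X)=\Omega(X)$ if and only if $\supp(J)\subseteq X$. Hence $\supp(J)\subseteq X$, i.e. $J\in\Inv_X(D)$, and $J\subseteq I$, which is exactly what we want.

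The main obstacle is justifying $I\Omega(X)=\Omega(X)$ when $X$ is not itself of the form $\V(I)$: one must check that $\V(I)\subseteq X$ together with $X$ being a splitting set forces this. Since $\V(I)\subseteq X=X^{\uparrow}$, every minimal prime of $I$ lies in $X$, and writing $I\Omega(X)=\bigcap\{ID_P\mid P\notin X^{\uparrow}\}$ (using that $\Omega(X)$ is an intersection of the localizations $D_P$ for $P\notin X^{\uparrow}$, and that extension commutes with such intersections for finitely generated — hence invertible — ideals in a Pr\"ufer domain) gives $ID_P=D_P$ for all such $P$, because $P\notin X^{\uparrow}\supseteq\V(I)$ means $I\not\subseteq P$. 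For the reduction to a finitely generated ideal one first replaces $I$ by a finitely generated subideal $I'$ with the same radical-level behavior, or simply observes that $1\in I\Omega(X)$ already yields the finite expression above directly without passing through all of $I$. A minor point to verify is that the ``if and only if'' from Proposition \ref{prop:kerext} is stated there for the kernel of $\lambda$, so I would either cite that computation or reprove the easy direction ($J\Omega(X)=\Omega(X)\Rightarrow\supp(J)\subseteq X$) inline.
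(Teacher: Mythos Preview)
Your argument is correct, but the paper takes a different route. Instead of passing through $\Omega(X)$, the paper works directly with the compactness of $\Spec(D)\setminus X$ (Proposition~\ref{prop:splitting}\ref{prop:splitting:compact}): for each $Q\notin X$ one has $ID_Q=D_Q$, so some finitely generated $J_Q\subseteq I$ already satisfies $J_QD_Q=D_Q$; the sets $\D(J_Q)$ form an open cover of the compact space $\Spec(D)\setminus X$, and taking $J:=J_{Q_1}+\cdots+J_{Q_n}$ for a finite subcover yields a finitely generated $J\subseteq I$ with $JD_Q=D_Q$ for all $Q\notin X$, i.e.\ $\supp(J)\subseteq X$.

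Your approach is really the algebraic repackaging of the same idea: the finite expression $1=\sum a_is_i$ in $I\Omega(X)$ plays the role of the finite subcover, and you then invoke Propositions~\ref{prop:splitting}\ref{prop:splitting:explode} and~\ref{prop:kerext} rather than redoing the cover argument by hand. This is arguably cleaner, since it reuses structure already established. One simplification: your third paragraph is unnecessary, because showing $I\Omega(X)=\Omega(X)$ needs neither finite generation of $I$ nor any intersection formula. If $I\Omega(X)$ were proper it would sit inside a maximal ideal $M$ of $\Omega(X)$; then $I\subseteq M\cap D\in\V(I)$, while $M\cap D\notin X$ since primes in $X$ blow up in $\Omega(X)$, contradicting $\V(I)\subseteq X$. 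With that one-line argument your second paragraph is already a complete proof.
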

\begin{proof}
Let $Q\in\Spec(D)\setminus X$. Then, $ID_Q=D_Q$, and thus $J_QD_Q=D_Q$ for some finitely generated ideal $J_Q\subseteq I$; therefore, $J_Q\nsubseteq Q$, i.e., $Q\in\D(J_Q)$. It follows that $\{\D(J_Q)\mid Q\in\Spec(D)\setminus X\}$ is an open cover of $\Spec(D)\setminus X$, which is compact in the Zariski topology by Proposition \ref{prop:splitting}. Therefore, there is a finite subcover $\D(J_1),\ldots,\D(J_n)$; in particular, $J:=J_1+\cdots+J_n$ is a finitely generated ideal such that $JD_Q=D_Q$ for every $Q\notin X$. Hence, $\supp(J)\subseteq X$ and $J\in\Inv_X(D)$. The claim is proved.
\end{proof}

The following theorem partially generalizes \cite[Proposition 3.1]{HK-Olb-Re}.
\begin{teor}\label{teor:DivX}
Let $D$ be a Pr\"ufer domain and let $X\subseteq\Max(D)$ be a splitting set. Then, $\Div_X(D)$ is a convex subgroup of $\Div(D)$; moreover, $\Div_X(D)$ is a complete $\ell$-group and is the completion of $\Inv_X(D)$.
\end{teor}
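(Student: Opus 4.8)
The plan is to establish three things: that $\Div_X(D)$ is a subgroup of $\Div(D)$, that it is convex, and that it is a complete $\ell$-group into which $\Inv_X(D)$ embeds densely. For the subgroup property, I would first check that $\Div_X(D)$ is closed under the $v$-product. If $I,J\in\Div_X(D)$, then $\supp((IJ)^v)\subseteq\supp(IJ)\subseteq\supp(I)\cup\supp(J)\subseteq X$ by Lemma \ref{lemma:supp-prodotto} (noting that passing to the $v$-closure cannot enlarge the support, since localization commutes with $v$ in a Pr\"ufer domain and $ID_P=D_P$ forces $(I^v)D_P=D_P$). For the inverse, the key point is that in $\Div(D)$ the inverse of $I$ is $(D:I)$, and by the very definition of $\Div_X(D)$ we have assumed $\supp(D:I)\subseteq X$; one also checks $\supp(D:(D:I))=\supp(I^v)=\supp(I)\subseteq X$, so $(D:I)\in\Div_X(D)$. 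The identity $D$ is clearly in $\Div_X(D)$.

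For convexity, suppose $I\subseteq J\subseteq L$ with $I,L\in\Div_X(D)$ and $J\in\Div(D)$. The argument mirrors the convexity proof for $\Inv_X(D)$ in Proposition \ref{prop:kerext}: for $P\in\supp(J)$ we have $JD_P\neq D_P$, hence either $JD_P\subsetneq D_P$, giving $P\in\supp(I)\subseteq X$, or $D_P\subsetneq JD_P$, giving $P\in\supp(L)\subseteq X$. This shows $\supp(J)\subseteq X$. For $\supp(D:J)$, the containments reverse: $I\subseteq J\subseteq L$ gives $(D:L)\subseteq(D:J)\subseteq(D:I)$, and the same local dichotomy applied to the divisorial ideals $(D:I)$ and $(D:L)$ (both of which lie in $X$) shows $\supp(D:J)\subseteq X$; hence $J\in\Div_X(D)$. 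Since the order on $\Div_X(D)$ is reverse inclusion and the $v$-sum and $v$-intersection of two elements of $\Div_X(D)$ again have support in $X$ (by Lemma \ref{lemma:supp-prodotto} applied to the sum, and directly for the intersection), $\Div_X(D)$ is an $\ell$-subgroup of $\Div(D)$.

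The heart of the theorem is the completeness-and-density statement, and I expect this to be the main obstacle. The strategy is to show directly that $\Inv_X(D)$ is dense in $\Div_X(D)$ and that $\Div_X(D)$ is complete, which by uniqueness of the $\ell$-group completion identifies $\Div_X(D)$ as $\widehat{\Inv_X(D)}$. For density: given $I\in\Div_X(D)$ with $I\subseteq D$ (we may reduce to the integral case up to a shift by a principal ideal, which lies in $\Inv_X(D)$ once we arrange its support inside $X$ — this bookkeeping step needs care), Lemma \ref{lemma:cic} gives that $I$ is $v$-invertible (its closed set lies in $X\subseteq\Max(D)$), and Lemma \ref{lemma:densita-basso} produces $J\in\Inv_X(D)$ with $J\subseteq I$; combined with the fact that $I$ is the $v$-closure (equivalently the intersection of the invertible ideals containing it, which can be taken with support in $X$ by the splitting-set property), one sandwiches $I$ between invertible ideals of $\Inv_X(D)$ arbitrarily tightly in the $\ell$-group sense, i.e. $I=\sup\{J\in\Inv_X(D)\mid J\supseteq I\}=\inf\{J\in\Inv_X(D)\mid J\subseteq I\}$ where these sup/inf are computed in $\Div_X(D)$. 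For completeness of $\Div_X(D)$: a subset bounded below (in reverse inclusion, i.e. a family of divisorial ideals all containing a fixed ideal with support in $X$) has as its infimum the $v$-closure of the ideal generated by the family, which is again divisorial, has support in $X$ by the convexity/support estimates above, and is $v$-invertible by Lemma \ref{lemma:cic}; one must verify its conductor also has support in $X$, which follows because the conductor is contained in each individual conductor, all of which lie over $X$, and again by the local dichotomy. Checking that these sup/inf genuinely satisfy the $\ell$-group completeness axioms relative to the reverse-inclusion order — rather than merely existing as divisorial ideals — is the delicate verification I would devote the most attention to.
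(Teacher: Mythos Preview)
Your overall structure is right, but two steps have genuine gaps.

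For closure under the $v$-product you assert $\supp((IJ)^v)\subseteq\supp(IJ)$ on the grounds that ``localization commutes with $v$ in a Pr\"ufer domain''. That is false: if $D$ is almost Dedekind with a non-invertible maximal ideal $M$, then $M^v=D$ while $(MD_M)^{v}=MD_M\neq D_M=M^vD_M$. Whether $LD_P=D_P$ still forces $L^vD_P=D_P$ is a separate, nontrivial question you have not settled; and even granting it, you never verify $\supp\bigl(D:(IJ)^v\bigr)\subseteq X$, which is also required by the definition of $\Div_X(D)$. The paper handles both points at once with a symmetric argument: for $P\notin X$ one has $D_P\subseteq(IJ)^vD_P$ and, using $\supp(D:I),\supp(D:J)\subseteq X$, also $D_P\subseteq\bigl((D:I)(D:J)\bigr)^vD_P$; since these two divisorial ideals are mutual $v$-inverses their ordinary product lies in $D$, which in the valuation ring $D_P$ forces both localizations to equal $D_P$.

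For completeness you form $(\sum_\alpha J_\alpha)^v$ and appeal to Lemma~\ref{lemma:cic} for $v$-invertibility, but that lemma is stated for proper integral ideals and your sum need not lie in $D$. Your claim that $\supp\bigl(D:(\sum_\alpha J_\alpha)^v\bigr)\subseteq X$ also fails: that conductor equals $\bigcap_\alpha(D:J_\alpha)$, and an intersection of ideals each supported in $X$ can have strictly larger support (containment goes the wrong way). The paper works from the other side: after reducing to $J_\alpha\subseteq D$ it takes $J:=\bigcap_\alpha J_\alpha$, which is an integral divisorial ideal with $\V(J)\subseteq\V(I)\subseteq X\subseteq\Max(D)$ for the given bound $I\in\Div_X(D)$; then Lemma~\ref{lemma:cic} applies directly, and the convexity already proved gives $J\in\Div_X(D)$. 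Your density argument is also more elaborate than necessary: it suffices, for each proper $I\in\Div_X(D)$, to exhibit $J\subseteq I\subseteq L\subsetneq D$ with $J,L\in\Inv_X(D)$; one takes $L=D\cap xD$ for suitable $x$ (from divisoriality of $I$) and obtains $J$ from Lemma~\ref{lemma:densita-basso}.
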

\begin{proof}
If $J\in\Div_X(D)$ then its inverse $(D:J)$ is in $\Div_X(D)$ too, as $(D:(D:J))=J$. Let $I,J\in\Div_X(D)$, and let $P\in\Spec(D)\setminus X$: then, $IJD_P=D_P$ and thus $D_P\subseteq(IJ)^vD_P$. Likewise, we also have $D_P\subseteq((D:I)(D:J))^vD_P$; since $((D:I)(D:J))^v$ is the inverse of $(IJ)^v$ in $\Div(D)$, it follows that both containments are equalities. Thus $\supp((IJ)^v),\supp(D:(IJ)^v)\subseteq X$ and $\Div_X(D)$ is closed by the $v$-product. Hence $\Div_X(D)$ is a subgroup of $\Div(D)$.

If $I\subseteq J\subseteq D$ and $I\in\Div_X(D)$, then $\supp(J)\subseteq\supp(I)\subseteq X$; likewise, $D\subseteq(D:J)\subseteq(D:I)$ and thus $\supp(D:J)\subseteq\supp(D:I)\subseteq X$. Hence $J\in\Div_X(D)$ and $\Div_X(D)$ is convex.

Let $\{J_\alpha\}_{\alpha\in A}$ be a subset of $\Div_X(D)$ that is bounded below, i.e., such that $I\subseteq J:=\bigcap_\alpha J_\alpha$ for some $I\in\Div_X(D)$. We can suppose without loss of generality that $J_\alpha\subseteq D$, and thus both $I$ and $J$ are in $D$. By construction, $J$ is divisorial and $\supp(J)\subseteq\supp(I)\subseteq X$; since $X\subseteq\Max(D)$, by Lemma \ref{lemma:cic} $J$ is also $v$-invertible, i.e., $J\in\Div(D)$. Since $\Div_X(D)$ is convex we have $J\in\Div_X(D)$, and thus $\Div_X(D)$ is complete.

Finally, we show that $\Inv_X(D)$ is dense in $\Div_X(D)$: given a proper ideal $I\in\Div_X(D)$, we need $J,L\in\Inv_X(D)$ such that $J\subseteq I\subseteq L\subsetneq D$. Since $I$ is divisorial and properly contained in $D$, there is an $x\in K$ such that $I\subseteq D\cap xD\subsetneq D$; thus we can take $L=D\cap xD$, and $L\in\Inv_X(D)$ since $\supp(L)\subseteq\supp(J)\subseteq X$. To find $I$ it is enough to apply Lemma \ref{lemma:densita-basso}.
\end{proof}

\section{Critical ideals}\label{sect:critical}
In this and the following section we show how the results about invertible ideals in an almost Dedekind domains can be generalized to the set $\Inv_X(D)$ when $X\subseteq\Max(D)$ is a splitting set. To do so, we need to construct ideal functions from $X$ to $\insZ$.

Let $P$ be a nonidempotent prime ideal of the Pr\"ufer domain $D$. Then, the set $Q:=\bigcap_{n\geq 1}P^n$ is a prime ideal strictly contained in $P$, and is actually the largest prime ideal strictly contained in $P$. The quotient $D_P/QD_P$ is a one-dimensional valuation domain with nonidempotent maximal ideal, and thus it is a discrete valuation ring; hence, it carries a valuation $v'_P$ with values in $\insZ$. If $P$ is minimal over an ideal $I$, we have $QD_P\subsetneq ID_P$, and thus the number
\begin{equation*}
w_P(I):=v'_P(ID_P/Q_DP)
\end{equation*}
is well-defined.

More generally, the previous reasoning holds whenever $I$ is a fractional ideal such that $QD_P\subsetneq ID_P\subsetneq D_Q$; this observation prompts the following definition.
\begin{defin}
Let $D$ be a Pr\"ufer domain and let $X\subseteq\Max(D)$. A fractional ideal $I$ is \emph{evaluable on $X$} if, for every $P\in X$, we have $QD_P\subsetneq ID_P\subsetneq D_Q$, where $Q=\bigcap_{n\geq 1}P^n$.
\end{defin}
The previous reasoning can thus be rephrased as saying that, if $I$ is evaluable on $X$, the \emph{ideal function}
\begin{equation*}
\begin{aligned}
\nu_I\colon X & \longrightarrow \insZ,\\
P & \longmapsto w_P(I)
\end{aligned}
\end{equation*}
is well-defined; in particular, $\nu_I$ will exists for all proper ideals $I$ with $\V(I)\subseteq X$.

\begin{lemma}\label{lemma:prod-evaluable}
Let $X\subseteq\Max(D)$. If $I,J$ are fractional ideals that are evaluable on $X$, then so is $IJ$, and $\nu_{IJ}=\nu_I+\nu_J$.
\end{lemma}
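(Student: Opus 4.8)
The plan is to reduce the statement to a purely local computation at each $P\in X$ and then invoke the multiplicativity of the discrete valuation $v'_P$. First I would fix $P\in X$ and set $Q=\bigcap_{n\geq 1}P^n$, the largest prime strictly contained in $P$. By hypothesis $QD_P\subsetneq ID_P\subsetneq D_Q$ and $QD_P\subsetneq JD_P\subsetneq D_Q$; I need to check that the same sandwich holds for $(IJ)D_P=(ID_P)(JD_P)$, so that $\nu_{IJ}(P)$ is defined. Since $D_P$ is a valuation ring, $ID_P$ and $JD_P$ are comparable to every fractional ideal, so it suffices to observe that $QD_Q=QD_P$ is idempotent (being the maximal ideal of the valuation ring $D_Q$, equivalently $QD_P=(QD_P)^2$ because $Q$ is the intersection of the $P^n$), while $ID_P$ and $JD_P$ are both strictly larger; hence $(ID_P)(JD_P)\supseteq (QD_P)(JD_P)\supsetneq QD_P$ cannot collapse to $QD_P$, and dually $(ID_P)(JD_P)\subseteq(ID_P)D_Q=ID_P\cdot D_Q\subsetneq D_Q$ using $ID_P\subsetneq D_Q$ and $D_Q\cdot D_Q=D_Q$. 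This gives $QD_P\subsetneq (IJ)D_P\subsetneq D_Q$, so $IJ$ is evaluable on $X$.

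Next I would compute the value. Passing to the quotient $\overline{D_P}:=D_P/QD_P$, which is a DVR with valuation $v'_P$, the image of $(IJ)D_P$ is the product of the images of $ID_P$ and $JD_P$ (reduction modulo $QD_P$ is a ring homomorphism, hence respects products of fractional ideals, once one knows all three ideals strictly contain $QD_P$ so that their images are genuine nonzero fractional ideals of $\overline{D_P}$). Since $v'_P$ is a (discrete) valuation, for fractional ideals $\overline{A},\overline{B}$ of $\overline{D_P}$ one has $v'_P(\overline{A}\,\overline{B})=v'_P(\overline{A})+v'_P(\overline{B})$ — this is just additivity of $v'_P$ on generators together with the fact that a fractional ideal of a DVR is determined by the minimum of the valuations of its elements. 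Therefore $w_P(IJ)=w_P(I)+w_P(J)$, i.e. $\nu_{IJ}(P)=\nu_I(P)+\nu_J(P)$ for every $P\in X$, which is the desired identity $\nu_{IJ}=\nu_I+\nu_J$.

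The only mildly delicate point is the strict-inequality bookkeeping: one must be careful that $ID_P$ and $JD_P$ are fractional ideals of $D_P$ that are not equal to $QD_P$ and not equal to $D_Q$, and that these strictness conditions propagate to the product. The cleanest way to handle this is to work with the value group: writing $v_P$ for the valuation of $D_P$ (with value group $\Gamma_{v_P}$), the condition $QD_P\subsetneq ID_P\subsetneq D_Q$ says precisely that the infimum $\gamma_I:=\inf v_P(ID_P)$ lies in the convex subgroup corresponding to $D_Q$ but is $>0$ is not quite it — rather, after quotienting by that convex subgroup, $ID_P$ maps to a nonzero proper fractional ideal of the rank-one quotient, whose value is a well-defined integer $w_P(I)$. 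Multiplicativity of the quotient valuation then gives $w_P(IJ)=w_P(I)+w_P(J)$ directly, and the strictness for $IJ$ is automatic because the integer $w_P(I)+w_P(J)$ is again finite. I expect no real obstacle here; the proof is a routine localization-plus-valuation argument, and the main thing to get right is simply the passage from fractional ideals over $D_P$ to fractional ideals over the DVR $D_P/QD_P$.
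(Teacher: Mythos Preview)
Your first-paragraph argument for evaluability contains two genuine errors. First, $QD_P$ need not be idempotent: while it is true that $QD_P=QD_Q$ is the maximal ideal of $D_Q$, the maximal ideal of a valuation ring is idempotent only when that ring is not discrete, and in the paper's principal setting of strongly discrete Pr\"ufer domains \emph{no} nonzero prime is idempotent (e.g.\ take $D_P$ with value group $\insZ\times\insZ$ ordered lexicographically: then $Q=\bigcap_nP^n$ has $Q^2\subsetneq Q$). Consequently the chain $(ID_P)(JD_P)\supseteq(QD_P)(JD_P)\supsetneq QD_P$ breaks: since $JD_P\subseteq D_Q$ and $QD_P$ is already a $D_Q$-module, one has $(QD_P)(JD_P)\subseteq QD_P$, the reverse of what you wrote. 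Second, your upper-bound step $ID_P\cdot D_Q\subsetneq D_Q$ fails whenever $w_P(I)\leq 0$: if $ID_P\supseteq D_P$ then $ID_P\cdot D_Q=ID_Q=D_Q$.

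The paper avoids both pitfalls. For the lower bound it uses that $QD_P$ is \emph{prime} in the valuation ring $D_P$: since $ID_P,JD_P\not\subseteq QD_P$, their product is not contained in $QD_P$, and total ordering of ideals forces $QD_P\subsetneq (IJ)D_P$. For the upper bound it uses that $D_Q$ is a ring, so $(ID_P)(JD_P)\subseteq D_Q$. Your third paragraph, working through the quotient DVR $D_P/QD_P$ and additivity of $v'_P$, is correct and in fact also delivers both strict inclusions for free (the image of $(IJ)D_P$ is $\overline{P}^{\,w_P(I)+w_P(J)}$, a nonzero proper fractional ideal); you should promote that to the main argument and discard the idempotence claim entirely.
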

\begin{proof}
Let $P\in X$ and $Q:=\bigcap_{n\geq 1}P^n$. Since $QD_P\subsetneq ID_P,JD_P$ are $QD_P$ is prime, $IJD_P$ is not contained in $QD_P$ and thus $QD_P\subsetneq IJD_P$ (as $D_P$ is a valuation domain). Moreover, $IJD_P\subsetneq D_Q$ since $D_Q$ is a ring and $ID_P,JD_P\subsetneq D_Q$. Since $P$ is arbitrary, $IJ$ is evaluable on $X$. The last claim is obvious.
\end{proof}

\begin{lemma}\label{lemma:nuI-InvX}
Let $X\subseteq\Max(D)$ be a discrete splitting set. Every $I\in\Div_X(D)$ is evaluable on $X$.
\end{lemma}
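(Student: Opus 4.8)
The plan is to verify the two defining conditions $QD_P \subsetneq ID_P$ and $ID_P \subsetneq D_Q$ for every $P \in X$, where $Q = \bigcap_{n\geq 1}P^n$, working locally at $P$ in the valuation domain $D_P$. First I would reduce to the integral case: by Lemma \ref{lemma:JL-1} write $I = JL^{-1}$ with $J,L \subseteq D$ ideals of support inside $\supp(I) \cup \supp(D:I) \subseteq X$; by Lemma \ref{lemma:prod-evaluable} it then suffices to show that $J$ and $L$ are each evaluable on $X$ (the inverse of an evaluable ideal is evaluable, since $\nu_{I^{-1}} = -\nu_I$ follows from $\nu_{II^{-1}} = \nu_D = 0$ via Lemma \ref{lemma:prod-evaluable}, once we know $I^{-1}$ is evaluable — so really the point is to handle integral ideals). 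So fix a proper integral ideal $A \subseteq D$ with $\supp(A) = \V(A) \subseteq X$ and a prime $P \in X$; I must check $QD_P \subsetneq AD_P \subsetneq D_Q$.

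For the right-hand strict inclusion: since $X$ is a \emph{discrete} splitting set, $P$ is nonidempotent, so $Q \subsetneq P$ and $Q$ is the largest prime strictly below $P$ (as recalled before the definition of evaluability). Hence $D_Q = (D_P)_{QD_P}$ is a proper overring of $D_P$, and $AD_P \subseteq PD_P \subsetneq D_P \subseteq D_Q$; moreover $AD_P \subseteq D_P$ and $D_P \subsetneq D_Q$ gives $AD_P \subsetneq D_Q$. (If $P \notin \V(A)$ one would instead have $AD_P = D_P$, still properly inside $D_Q$, but in fact we should also worry that $P$ might not be in $\supp(A)$ — however $P \in X$ need not lie in $\V(A)$; in that case $AD_P = D_P$ and both inclusions $QD_P \subsetneq D_P \subsetneq D_Q$ hold trivially, so this case is fine.) The substantive case is $P \in \V(A)$, $P \in X$.

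For the left-hand strict inclusion $QD_P \subsetneq AD_P$ when $P \in \V(A)$: here is where I expect the real work. The ideal $AD_P$ is a nonzero ideal of the valuation domain $D_P$; I must rule out $AD_P \subseteq QD_P$. If $AD_P \subseteq QD_P = \bigcap_n P^nD_P$, then since $A$ is finitely generated (being a finitely generated ideal? no — $A$ need only be an arbitrary integral ideal in the statement) — so one cannot directly use that $A$ is finitely generated. Instead use that $A \in \Div_X(D)$: the hypothesis that $A$ is $v$-invertible forces $(A : A) = D$, hence $(AD_P : AD_P) = D_P$. If $AD_P \subseteq QD_P$, then since $QD_P$ is a prime ideal with $(QD_P : QD_P) \supseteq D_Q \supsetneq D_P$, and a valuation ideal contained in a prime $\mathfrak{q}$ of a valuation domain $V$ with $\mathfrak{q}$ not principal (or more precisely with the ideal "close to" $\mathfrak{q}$) tends to have a strictly larger endomorphism ring — this is the mechanism behind the Remark after the definition of $\Div_X(D)$, where $D_Q$ itself fails $v$-invertibility. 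Concretely: if $AD_P \subseteq QD_P$ then $QD_P \subseteq (A : A)D_P = D_P$ would be consistent, so I instead argue via values: in the value group $\Gamma_P$ of $D_P$, the set $v_P(QD_P)$ is the set of elements exceeding every element of the convex subgroup $H$ corresponding to $Q$, while $v_P(AD_P)$ has a minimum $\gamma_0$ (as $A$ is $v$-invertible, hence "well-positioned"; this needs the $v$-invertibility crucially, since $D_Q = $ the ideal with no minimal value is exactly the non-$v$-invertible example). Given that minimum $\gamma_0$ exists, $AD_P \subseteq QD_P$ would force $\gamma_0 > H$, whence $AD_P \subseteq P^nD_P$ for all $n$, giving $AD_P \subseteq QD_P$, but then $D_Q \subseteq (QD_P : AD_P) \subseteq (AD_P : AD_P)^{?}$ — the cleanest route is: $AD_P \subseteq QD_P \subsetneq D_Q$ and $(D_Q : AD_P) \ni 1$, and one computes $(AD_P)^v = (D_P : (D_P : AD_P))$; since $(D_P : AD_P) \supseteq (D_P : QD_P) = D_Q$... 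I would chase this to contradict $(AD_P : AD_P) = D_P$. **The main obstacle** is precisely pinning down this last step cleanly — showing that $v$-invertibility of $A$ (equivalently $(A:A) = D$) is exactly what prevents $AD_P$ from collapsing into $\bigcap_n P^n D_P$ — and I expect the argument to mirror the computation $(QD_Q)^v = QD_Q$ from the Remark: if $AD_P \subseteq QD_P$, then $(D_P : AD_P) \supseteq D_Q$, so $(AD_P : AD_P) \subseteq (AD_P : AD_P)$... more usefully $(A:A) \supseteq \bigcap_{M}(AD_M : AD_M) $ and at $M = P$ one gets $(AD_P : AD_P) \supseteq D_Q \supsetneq D_P$, contradicting $v$-invertibility.
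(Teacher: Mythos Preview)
Your overall strategy---reduce to a proper integral ideal $A$ with $\V(A)\subseteq X$ and then verify the two bounds $QD_P\subsetneq AD_P\subsetneq D_Q$---is viable, but as written there are two genuine gaps.

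First, a minor point: Lemma~\ref{lemma:JL-1} is stated only for \emph{finitely generated} fractional ideals, so you cannot invoke it for an arbitrary $I\in\Div_X(D)$. The decomposition $I=(I\cap D)(I+D)$ does hold for all ideals in a Pr\"ufer domain (this is what the paper uses, citing Gilmer's Theorem~25.2(d)), so the reduction can be repaired, but not via the lemma you cite.

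Second, and this is the real gap: you correctly identify the inequality $QD_P\subsetneq AD_P$ as the crux, but you never establish it. The route through $v$-invertibility does not work. The claim ``if $AD_P\subseteq QD_P$ then $(AD_P:AD_P)\supseteq D_Q$'' is false in general (take $AD_P$ principal generated by an element of $QD_P$; then $(AD_P:AD_P)=D_P$). And even if the local endomorphism ring were large, the equality $(A:A)=D$ is a global intersection $\bigcap_M(AD_M:AD_M)$, which can still be $D$ even when one factor is strictly larger than $D_P$. What you are missing is the one-line observation that makes the whole thing immediate: $Q$ is strictly below the maximal ideal $P$, hence $Q\notin\Max(D)\supseteq X\supseteq\V(A)$, hence $A\not\subseteq Q$, hence $AD_Q=D_Q$. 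Since $D_P$ is a valuation domain (so its ideals are totally ordered), this forces $QD_P\subsetneq AD_P$.

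For comparison, the paper's proof does not attempt a direct argument for arbitrary integral $A$. It first treats the case $I\in\Inv_X(D)$ (where finite generation gives $ID_P$ principal, hence $\neq D_Q$, and the same observation $Q\notin\supp(I)$ gives the lower bound). Then for a proper $J\in\Div_X(D)$ it invokes Lemma~\ref{lemma:densita-basso} to produce an $L\in\Inv_X(D)$ with $L\subseteq J$, and sandwiches: $QD_P\subsetneq LD_P\subseteq JD_P\subseteq D_P\subsetneq D_Q$, and dually $D_P\subseteq(D:J)D_P\subseteq(D:L)D_P\subsetneq D_Q$. This sandwich is what handles both bounds at once without any appeal to the structure of $(A:A)$.
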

\begin{proof}
Let $P\in X$ and $Q:=\bigcap_{n\geq 1}P^n$. 

Suppose first that $I\in\Inv_X(D)$. We need to show that $QD_Q\subsetneq ID_P\subsetneq D_Q$. If $P\notin\supp(I)$, then $ID_P=D_P$ and we are done. If $P\in\supp(I)$, then $Q\notin\supp(I)$ since $\supp(I)\subseteq\Max(D)$, and thus $ID_Q=D_Q$; hence, $QD_Q\subsetneq ID_P\subseteq D_Q$. Since $I$ is finitely generated, $ID_P$ is principal over $D_P$, and thus it can't be equal to $D_Q$; the claim follows.

Suppose that $I\in\Div_X(D)$. Then, $I=(I\cap D)(I+D)$ \cite[Theorem 25.2(d)]{gilmer} and thus
\begin{equation*}
I=I^v=((I\cap D)(I+D))^v=((I\cap D)^v(I+D)^v)^v=(I\cap D)\times_v(I+D)^v.
\end{equation*}
Since $(I+D)^v$ is the inverse in $\Div_X(D)$ of a proper ideal of $D$, it is enough to prove that $J$ and $(D:J)$ are evaluable on $X$ for every proper ideal $J\in\Div_X(D)$. By Lemma \ref{lemma:densita-basso}, there is an $L\in\Inv_X(D)$ such that $L\subseteq J$. Hence, for every $P\in X$ we have $QD_Q\subsetneq LD_P\subseteq JD_P\subseteq D_P\subsetneq D_Q$; likewise, $D_P\subseteq (D:J)D_P\subseteq(D:L)D_P\subsetneq D_Q$. Hence both $J$ and $(D:J)$ are evaluable on $X$, and the claim is proved.
\end{proof}

Lemma \ref{lemma:nuI-InvX} implies that there is a map
\begin{equation*}
\begin{aligned}
\Psi\colon\Div_X(D) &\longrightarrow \insfunct(X,\insZ),\\
I & \longmapsto \nu_I.
\end{aligned}
\end{equation*}
We want to understand the properties of this map, and in particular its image; we focus in this section on $\Inv_X(D)$, while leaving some consequences about $\Div_X(D)$ to Section \ref{sect:DivX-2}.

\begin{lemma}\label{lemma:prop-psi}
Preserve the notation above. Then:
\begin{enumerate}[(a)]
\item\label{lemma:prop-psi:inj} $\Psi$ is injective;
\item\label{lemma:prop-psi:Inv} the restriction of $\Psi$ to $\Inv_X(D)$ is a group homomorphism.
\end{enumerate}
\end{lemma}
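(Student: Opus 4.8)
The plan is to prove the two parts of Lemma \ref{lemma:prop-psi} in the order they are stated, using the multiplicativity already recorded in Lemma \ref{lemma:prod-evaluable}. For part \ref{lemma:prop-psi:Inv}, the statement is almost immediate: given $I,J\in\Inv_X(D)$, their product $IJ$ lies in $\Inv_X(D)$ by Proposition \ref{prop:kerext}, and by Lemma \ref{lemma:nuI-InvX} all three ideals are evaluable on $X$, so Lemma \ref{lemma:prod-evaluable} gives $\nu_{IJ}=\nu_I+\nu_J$, i.e.\ $\Psi(IJ)=\Psi(I)+\Psi(J)$. One should also note that $\Psi$ is compatible with inverses, since for $I\in\Inv_X(D)$ we have $I\times_vI^{-1}=D$ in $\Div_X(D)$ and $\nu_D=0$, whence $\nu_{I^{-1}}=-\nu_I$; this follows formally from additivity together with $\nu_D=0$ (and $\nu_D=0$ because $ID_P=D_P$ trivially forces $w_P(D)=v'_P(D_P/QD_P)=0$ — wait, careful: $w_P$ measures the valuation of the image, and $D_P/QD_P$ itself has valuation $0$). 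So part \ref{lemma:prop-psi:Inv} costs essentially nothing once Lemmas \ref{lemma:prod-evaluable} and \ref{lemma:nuI-InvX} are in hand.

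The substantive part is injectivity of $\Psi$ on all of $\Div_X(D)$, part \ref{lemma:prop-psi:inj}. Here the plan is: since $\Psi$ respects the $v$-product (extend the argument above from $\Inv_X(D)$ to $\Div_X(D)$ using Lemmas \ref{lemma:prod-evaluable} and \ref{lemma:nuI-InvX}), it suffices to show that $\nu_I\equiv 0$ forces $I=D$. Suppose $\nu_I=0$; I want to conclude $ID_P=D_P$ for every maximal ideal $P$ of $D$, because then $I=\bigcap_{P\in\Max(D)}ID_P=\bigcap_P D_P=D$ (as $I$ is divisorial it equals the intersection of its localizations, at least after reducing to the integral case via $I=(I\cap D)\times_v(I+D)^v$ and handling the two factors separately — actually the cleanest route is: $\nu_I=0\Rightarrow\nu_{I\cap D}=0$ and $\nu_{(I+D)^v}=0$, so it is enough to treat a proper integral divisorial ideal). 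For $P\notin X$ we have $ID_P=D_P$ by the definition of $\supp(I)\subseteq X$. For $P\in X$, write $Q=\bigcap_{n\ge1}P^n$; evaluability gives $QD_P\subsetneq ID_P\subsetneq D_Q$, and $\nu_I(P)=v'_P(ID_P/QD_P)=0$ means that in the DVR $D_P/QD_P$ the image of $ID_P$ has valuation zero, i.e.\ $ID_P/QD_P=D_P/QD_P$, hence $ID_P=D_P$. Combining the two cases, $ID_P=D_P$ for all $P$, so $I=D$.

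The main obstacle I anticipate is the reduction step asserting that $\Psi$ is a homomorphism for the $v$-product on $\Div_X(D)$, which is needed to pass from "$\nu_I=0\Rightarrow I=D$" to full injectivity. The potential subtlety is that $IJ$ (plain product) need not be divisorial, so $\nu_{IJ}$ computed by Lemma \ref{lemma:prod-evaluable} must be identified with $\nu_{(IJ)^v}$; but this is harmless because $(IJ)D_P=(IJ)^vD_P$ for every $P$ (localization commutes with the $v$-operation in a Prüfer domain, since $D_P$ is a valuation domain and every finitely generated, hence every, fractional ideal there is divisorial — more precisely $(IJ)^vD_P=((IJ)D_P)^v=(IJ)D_P$), so the images in $D_P/QD_P$ coincide and $\nu_{(IJ)^v}=\nu_{IJ}=\nu_I+\nu_J$. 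With that identification in place, $\Psi(I\times_vJ)=\Psi(I)+\Psi(J)$, the kernel of $\Psi$ is a subgroup, and showing it is trivial reduces exactly to the computation above. The rest is routine bookkeeping with the valuation $v'_P$ on $D_P/QD_P$.
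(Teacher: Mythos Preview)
Your treatment of part \ref{lemma:prop-psi:Inv} is correct and matches the paper: it is immediate from Lemmas \ref{lemma:prod-evaluable} and \ref{lemma:nuI-InvX}.

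For part \ref{lemma:prop-psi:inj}, however, your strategy has a genuine gap. You reduce injectivity to showing that $\Psi$ is a group homomorphism on all of $\Div_X(D)$ (so that it suffices to check the kernel), and you justify this via the claim that $(IJ)^vD_P=(IJ)D_P$ because ``localization commutes with the $v$-operation'' and ``every fractional ideal of a valuation domain is divisorial''. Both of these assertions are false in general: in a valuation domain of rank $>1$ not every ideal is divisorial, and the $v$-closure does not commute with localization for non-finitely-generated ideals. In fact the paper explicitly records, in Remark \ref{oss:Psi} immediately after the lemma, that $\Psi$ is \emph{not} a group homomorphism on $\Div_X(D)$: when $IJ\neq(IJ)^v$ one has $\nu_{IJ}\neq\nu_{(IJ)^v}$, so $\Psi(I\times_vJ)\neq\Psi(I)+\Psi(J)$.

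The fix is simply to drop the homomorphism detour and argue directly, which is what the paper does and what you have essentially already written in your kernel computation. Suppose $\nu_I=\nu_J$ with $I,J\in\Div_X(D)$. For every maximal ideal $M\notin X$ we have $ID_M=D_M=JD_M$ since $\supp(I),\supp(J)\subseteq X$. For $M\in X$, evaluability gives $QD_M\subsetneq ID_M,\,JD_M$ with $Q=\bigcap_nM^n$, and $\nu_I(M)=\nu_J(M)$ means the images of $ID_M$ and $JD_M$ in the DVR $D_M/QD_M$ have the same valuation, hence coincide, hence $ID_M=JD_M$. Since a fractional ideal is determined by its localizations at maximal ideals, $I=J$. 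This is your own local computation, applied pairwise rather than after an (illegitimate) reduction to the kernel.
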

\begin{proof}
\ref{lemma:prop-psi:inj} Suppose $\nu_I=\nu_J$. If $I\neq J$, there is a maximal ideal such that $ID_M\neq JD_M$; in particular, $M$ must belong to at least one between $\supp(I)$ and $\supp(J)$, and thus to $X$. But then, $\nu_I(M)=\nu_J(M)$, which forces $ID_M=JD_M$, a contradiction.

\ref{lemma:prop-psi:Inv} follows from Lemma \ref{lemma:nuI-InvX}.
\end{proof}

\begin{oss}\label{oss:Psi}
When considered as a map from $\Div_X(D)$ to $\insfunct(X,\insZ)$, $\Psi$ is \emph{not} a group homomorphism. Indeed, if $I,J\in\Div_X(D)$ then $\Psi(I)\Psi(J)=\nu_I+\nu_J=\nu_{IJ}$, but $\Psi(I\times_vJ)=\nu_{I\times_vJ}=\nu_{(IJ)^v}$, and if $IJ\neq(IJ)^v$ then the two ideal functions will be distinct.
\end{oss}

\begin{prop}\label{prop:semicont}
Let $D$ be a Pr\"ufer domain and let $X\subseteq\Max(D)$ be a discrete splitting set of $D$. For every $I\in\Inv_X(D)$, the ideal function $\nu_I$ is lower semicontinuous, with respect to the inverse topology.
\end{prop}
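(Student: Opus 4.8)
The plan is to show that the set $\{P\in X\mid\nu_I(P)\geq n\}$ is open in the inverse topology for every $n\in\insZ$, which is equivalent to lower semicontinuity of $\nu_I$. By Lemma \ref{lemma:JL-1} we may write $I=JL^{-1}$ with $J,L\subseteq D$ finitely generated ideals whose supports are contained in $\supp(I)\subseteq X$, and by Lemma \ref{lemma:prod-evaluable} we have $\nu_I=\nu_J-\nu_L$; since the difference of two lower (resp. upper) semicontinuous functions is lower semicontinuous as soon as the subtrahend is actually \emph{continuous}, it suffices to treat the case of a proper finitely generated ideal $J$ of $D$ with $\V(J)\subseteq X$ and show that $\nu_J$ is \emph{continuous} with respect to the inverse topology. (Continuity, not just semicontinuity, is what makes the subtraction legitimate; I will need to remember to claim the stronger statement for integral finitely generated ideals.)

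So the core step is: for a finitely generated proper ideal $J\subseteq D$ with $\V(J)\subseteq X\subseteq\Max(D)$, the function $P\mapsto w_P(J)=v'_P(JD_P/QD_P)$ is locally constant on $\V(J)$ in the inverse (equivalently, since $\V(J)$ has dimension $0$, in the constructible) topology. Fix $P\in\V(J)$. Since $J=(a_1,\ldots,a_k)$ is finitely generated and $D_P$ is a valuation domain, $JD_P=a D_P$ where $a$ is the generator of least value, say $a=a_i$; then $w_P(J)=n$ means precisely that $J D_P = \mm_P^n D_P$ in $D_P/QD_P$, i.e. (pulling back) that $J\subseteq P^{(n)}$ but $J\nsubseteq P^{(n+1)}$ in the appropriate local sense. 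I would make this concrete: because $PD_P$ is principal, pick $t\in D$ with $PD_P=tD_P$; then $w_P(J)=n$ iff $a_i/t^n\in D_P\setminus PD_P$, i.e. iff there are $s\in D\setminus P$ and an index $i$ with $s a_i \in (t^n)$-part and $s a_i\notin$ the $(t^{n+1})$-part, while every $a_j$ satisfies $s' a_j \in (t^n)$ for some $s'\notin P$. Each of these is a finite conjunction/disjunction of conditions of the form ``$x\in yD_P$'' and ``$x\notin yD_P$'', each of which cuts out a set of the form $\D(\cdot)\cap\V(\cdot)$ or its complement; intersecting over the finitely many generators and using that on $\V(J)$ the Zariski, inverse and constructible topologies coincide (as noted in the Preliminaries, since $\V(J)$ is a $0$-dimensional subset of $\Max(D)$), these describe a constructible — hence inverse-open — neighbourhood of $P$ on which $w_P(J)$ is constant.

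A cleaner packaging of the same idea, which I would probably prefer in the write-up, is this. Fix $P_0\in\V(J)$ and let $n=w_{P_0}(J)$. Since $D$ is Pr\"ufer and $X$ is a discrete splitting set, there is a finitely generated ideal $\mathfrak{p}\subseteq D$ with $\mathfrak{p}D_{P_0}=P_0D_{P_0}$; consider the finitely generated ideals $\mathfrak{p}^n+J$ and — to detect the strict inequality — a finitely generated ideal $J'$ with $J'D_{P_0}=J D_{P_0}$ and $J'\subseteq \mathfrak{p}^n$, obtained by clearing denominators. The conditions ``$w_P(J)\ge n$'' and ``$w_P(J)\le n$'' each become, after localizing, containments between finitely generated ideals, hence hold on the sets $\V(\text{something finitely generated})$ and complements thereof; their intersection is a constructible neighbourhood $U$ of $P_0$ in $\V(J)$, and since $U$ is constructible in a $0$-dimensional space it is inverse-open in $X$. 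On $U$ we have $w_P(J)=n$, so $\nu_J$ is continuous at $P_0$.

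The main obstacle I anticipate is purely bookkeeping: being careful that the ``$\nsubseteq$''-type condition defining the strict inequality $JD_P\nsubseteq \mm_P^{n+1}D_P$ really is inverse-\emph{open} and not merely constructible, and this is exactly where the hypothesis $X\subseteq\Max(D)$ — forcing $\V(J)$ to be $0$-dimensional, where constructible $=$ inverse — does the work; I should invoke the sentence from the Preliminaries about the three topologies agreeing on $\V(I)\subseteq\Max(R)$. A secondary point to handle with care is the reduction via $I=JL^{-1}$: I must state and prove the integral case as \emph{continuity} of $\nu_J$, deduce continuity of $\nu_L$ likewise, and only then conclude $\nu_I=\nu_J-\nu_L$ is lower semicontinuous; stating only semicontinuity for $J$ would not be enough to subtract.
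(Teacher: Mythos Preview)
Your central claim --- that for a proper finitely generated ideal $J$ with $\V(J)\subseteq X$ the function $\nu_J$ is \emph{continuous} on $X$ --- is false in general, and this breaks the whole reduction. Continuity of $\nu_J$ is precisely what Proposition~\ref{prop:caratt-VI-crit} characterizes: it holds if and only if $\V(J)\cap\inscrit_X(D)=\emptyset$, equivalently if and only if $\rad(J)$ is finitely generated. In any almost Dedekind domain that is not an SP-domain (and such domains exist) there are finitely generated ideals $J$ for which $\nu_J$ is not continuous --- indeed $\nu_J$ need not even be bounded. So only \emph{lower} semicontinuity is available for integral $J$ and $L$, and then the difference $\nu_I=\nu_J-\nu_L$ has no reason to be lower semicontinuous; your reduction via Lemma~\ref{lemma:JL-1} therefore does not go through.

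The specific error in your argument for continuity is that the uniformizer $t$ (or the finitely generated ideal $\mathfrak{p}$) is chosen at the single point $P_0$: for a neighbouring $P$ there is no reason that $tD_P=PD_P$ or $\mathfrak{p}D_P=PD_P$, so the constructible conditions you write down --- containments involving $t^n$ or $\mathfrak{p}^n$ --- do not express ``$w_P(J)\geq n$'' or ``$w_P(J)\leq n$'' at $P\neq P_0$; they compare $JD_P$ against powers of the \emph{wrong} ideal. The paper's proof avoids any local uniformizer: it uses the global radical $J:=\rad(I)$, which (finitely generated or not) satisfies $w_M(J)=1$ for every $M\in\V(I)$, forms the auxiliary ideal $L:=I^{-1}J^{n+1}$, and reads off that $\nu_I^{-1}([1,n])=\V(L\cap D)$ is Zariski-closed inside $\V(I)$; since the Zariski, inverse and constructible topologies agree on the zero-dimensional set $\V(I)\subseteq\Max(D)$, this set is inverse-closed, giving lower semicontinuity directly.
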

\begin{proof}
We need to show that $\nu_I^{-1}([0,n])=\{M\in X\mid w_M(I)\leq n\}$ is closed in $X$ for every $n\inN$. We first note that $\nu_I^{-1}(0)=\D(I)\cap X$ is closed in the inverse topology since $I$ is finitely generated; hence, we only need to consider $X_n:=\nu_I^{-1}([1,n])$.

Let $J:=\rad(I)$ and let $L:=I^{-1}J^{n+1}$. Take a maximal ideal $M\in X$: if $M\notin \V(J)$, then $w_M(L)=0$ since $w_M(I)=0=w_M(J)$. On the other hand, if $M\in \V(J)$, then $w_M(L)=(n+1)w_M(J)-w_M(I)=(n+1)-w_M(I)$. Hence, $w_M(L)>0$ if and only if $w_M(I)<n+1$, i.e., if and only if $M\in X_n$. It follows that $\V(L\cap D)=X_n$ is closed in the Zariski topology. However, the Zariski, inverse and constructible topology agree on $\V(I)$, and thus (since $X_n\subseteq\V(I)$ and $\V(I)$ is closed in $X$) it follows that $X_n$ is also closed in the inverse topology. Thus, $\nu_I^{-1}([0,n])=\D(I)\cup X_n$ is closed in the inverse topology.
\end{proof}

In general, however, $\nu_I$ is not continuous, not even for finitely generated ideals; indeed, it may even not be bounded. We say that $I$ is \emph{bounded} (with respect to $X$) if $\nu_I$ is bounded.

To understand what happens when the ideal functions are not continuous, we now reason as in the case of almost Dedekind domains.
\begin{defin}
Let $D$ be a Pr\"ufer domain, let $X\subseteq\Max(D)$ be a discrete splitting set of $D$ and let $M\in X$. We say that $M$ is:
\begin{itemize}
\item \emph{critical with respect to $X$} if there is no radical ideal $I\in\Inv_X(D)$ such that $I\subseteq M$;
\item \emph{bounded-critical with respect to $X$} if there is no bounded ideal $I\in\Inv_X(D)$ such that $I\subseteq M$.
\end{itemize} 
We denote by $\inscrit_X(D)$ and $\bcrit_X(D)$, respectively, the set of critical and bounded-critical ideals of $D$ with respect to $X$.
\end{defin}

We characterize what it means for an ideal to not meet $\inscrit_X(D)$ and $\bcrit_X(D)$. Compare the following two propositions with the analogous characterizations for almost Dedekind domains (\cite[Proposition 3.10]{SP-scattered} and \cite[Propostion 3.4]{bounded-almded}).
\begin{prop}\label{prop:caratt-VI-crit}
Let $D$ be a Pr\"ufer domain, let $X\subseteq\Max(D)$ be a discrete splitting set of $D$, and $I$ be a proper ideal in $\Inv_X(D)$. The following are equivalent:
\begin{enumerate}[(i)]
\item\label{prop:caratt-VI-crit:empty} $\V(I)\cap\inscrit_X(D)=\emptyset$;
\item\label{prop:caratt-VI-crit:rad} $\rad(I)$ is finitely generated;
\item\label{prop:caratt-VI-crit:fact} there are finitely generated radical ideals $J_1,\ldots,J_n$ such that $I=J_1\cdots J_n$;
\item\label{prop:caratt-VI-crit:cont} $\nu_I$ is continuous.
\end{enumerate}
\end{prop}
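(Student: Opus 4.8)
The plan is to prove the chain of implications \ref{prop:caratt-VI-crit:fact} $\Rightarrow$ \ref{prop:caratt-VI-crit:rad} $\Rightarrow$ \ref{prop:caratt-VI-crit:cont} $\Rightarrow$ \ref{prop:caratt-VI-crit:empty} $\Rightarrow$ \ref{prop:caratt-VI-crit:fact}, extracting as much as possible from the ideal-function formalism already set up. The implication \ref{prop:caratt-VI-crit:fact} $\Rightarrow$ \ref{prop:caratt-VI-crit:rad} is the easiest: a product $J_1\cdots J_n$ of finitely generated ideals is finitely generated, and since each $J_k$ is radical and $D$ is Pr\"ufer, at every maximal ideal $M$ the valuation $v'_M$ sends $J_k D_M$ to $0$ or $1$; hence $\nu_I = \sum_k \nu_{J_k}$ by Lemma \ref{lemma:prod-evaluable}, and one checks $\rad(I)$ is the product of those $J_k$ that contain $M$—more directly, $\rad(I)=\rad(J_1\cdots J_n)=J_1\cap\cdots\cap J_n = J_1\cdots J_n\cdot(\text{unit adjustments})$, which is finitely generated. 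For \ref{prop:caratt-VI-crit:rad} $\Rightarrow$ \ref{prop:caratt-VI-crit:cont}: if $J:=\rad(I)$ is finitely generated then $\nu_J$ is $\{0,1\}$-valued on $X$ and lower semicontinuous by Proposition \ref{prop:semicont}, hence continuous (its fibers $\nu_J^{-1}(0)=\D(J)\cap X$ and $\nu_J^{-1}(1)=\V(J)\cap X$ are both closed in the inverse topology because $J$ is finitely generated and the three topologies agree on $\V(I)$). But $\V(I)=\V(J)$ and, crucially, $\nu_I$ is bounded on the compact set $\V(I)\cap X$ by some $N$, and $\{M : w_M(I)\le k\}$ is both closed (Proposition \ref{prop:semicont}) and, one argues, open inside $\V(I)\cap X$ — this last point needs care, see below.

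**The continuity step in detail.** The heart of \ref{prop:caratt-VI-crit:rad} $\Rightarrow$ \ref{prop:caratt-VI-crit:cont} is to show each fiber $\nu_I^{-1}(k)$ is open in $X$ (with respect to the inverse topology). Since $\rad(I)$ is finitely generated, the powers $\rad(I)^k$ are finitely generated, and I would compare $I$ with these: locally at $M\in\V(I)$, $w_M(I)=k$ precisely when $I D_M = \rad(I)^k D_M \cdot(\text{something with }w_M=0)$ but $ID_M\ne\rad(I)^{k+1}D_M\cdot(\ldots)$. More cleanly, consider $\V(I\cap D + \rad(I)^{k+1})$ and $\D(\text{suitable ideal})$: the point is that $\{M : w_M(I)\ge k+1\}=\V(\rad(I)^{k+1}:I)$ is closed, while $\{M : w_M(I)\le k\}$ is closed by semicontinuity, so each fiber $\nu_I^{-1}(k)$, being the intersection of a closed set with the complement of another closed set that is itself seen to be closed, is clopen in $\V(I)\cap X$; continuity of $\nu_I$ on $X$ follows since $\nu_I$ vanishes on the clopen complement $\D(I)\cap X$. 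For \ref{prop:caratt-VI-crit:cont} $\Rightarrow$ \ref{prop:caratt-VI-crit:empty}: if $M\in\V(I)\cap\inscrit_X(D)$, take a clopen neighborhood $U\subseteq X$ of $M$ on which $\nu_I$ is constant, say $\equiv k\ge 1$ (it is $\ge 1$ since $M\in\V(I)$); I claim $U$ is compact (closed in the compact set $\V(I)\cap X$) so by Lemma \ref{lemma:densita-basso}-type reasoning the ideal $\bigcap_{M\in U}M$ — or rather a finitely generated ideal with support exactly $U$ and all local values $1$ — is a radical ideal in $\Inv_X(D)$ contained in $M$, contradicting criticality. Finally \ref{prop:caratt-VI-crit:empty} $\Rightarrow$ \ref{prop:caratt-VI-crit:fact}: if $\V(I)\cap\inscrit_X(D)=\emptyset$, then for each $M\in\V(I)$ there is a radical $J_M\in\Inv_X(D)$ with $J_M\subseteq M$; this gives an open cover of the compact set $\V(I)\cap X$, and a finite subcover yields radical ideals $J_1,\ldots,J_r$ whose product $J:=J_1\cdots J_r$ has $\V(J)\supseteq\V(I)\cap X$, hence $\nu_J\ge 1$ everywhere $\nu_I\ge 1$; then one factors $I$ by peeling off copies of such radical ideals, decreasing $\max\nu_I$ at each stage, which terminates because $\nu_I$ is bounded (being finitely generated with support in the compact $\V(I)\cap X$... or, if unbounded here, one first notes \ref{prop:caratt-VI-crit:empty} forces $\nu_I$ bounded via the finite subcover argument).

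**Main obstacle.** I expect the real work to be in two places. First, the passage from "a clopen set $U\subseteq X$" to "a \emph{finitely generated} radical ideal of $D$ with support $U$": openness in the inverse topology of $X$ does not immediately produce a finitely generated ideal $J$ with $\V(J)\cap X=U$ and $\nu_J\equiv 1$; one must use that $U$ is compact (so $\V(J)$ for $J$ a finitely generated ideal can cut it out, via Lemma \ref{lemma:densita-basso} applied to $\bigcap_{M\in U} M$) \emph{and} that such $J$ can be chosen radical — this uses strong discreteness / the splitting-set hypothesis to control the local structure, exactly as in the almost Dedekind case of \cite{SP-scattered}. Second, the termination of the factorization in \ref{prop:caratt-VI-crit:empty} $\Rightarrow$ \ref{prop:caratt-VI-crit:fact}: one must show $\nu_I$ is bounded under hypothesis \ref{prop:caratt-VI-crit:empty}, and then that dividing $I$ by the radical ideal $\rad(I)$ (now finitely generated, by the subcover) strictly decreases $\max\nu_I$ and keeps the quotient in $\Inv_X(D)$ with the no-critical-ideal property — an induction on $\max\nu_I$. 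These are the steps where the analogy with the one-dimensional theory must be made to work in the presence of higher-dimensional primes below the maximal ideals of $X$, which is precisely what the evaluability machinery and the assumption $X\subseteq\Max(D)$ are designed to handle.
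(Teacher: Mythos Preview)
Your cycle \ref{prop:caratt-VI-crit:fact}$\Rightarrow$\ref{prop:caratt-VI-crit:rad}$\Rightarrow$\ref{prop:caratt-VI-crit:cont}$\Rightarrow$\ref{prop:caratt-VI-crit:empty}$\Rightarrow$\ref{prop:caratt-VI-crit:fact} is close in spirit to the paper's, and your \ref{prop:caratt-VI-crit:empty}$\Rightarrow$\ref{prop:caratt-VI-crit:fact} is essentially the paper's \ref{prop:caratt-VI-crit:empty}$\Rightarrow$\ref{prop:caratt-VI-crit:rad}$\Rightarrow$\ref{prop:caratt-VI-crit:fact} (compactness cover to get $\rad(I)$ finitely generated, then induction on $\sup\nu_I$ peeling off $\rad(I)$). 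Two issues remain, one minor and one substantive. In \ref{prop:caratt-VI-crit:rad}$\Rightarrow$\ref{prop:caratt-VI-crit:cont} your formula is backwards: with $J=\rad(I)$ finitely generated, the conductor $(J^{k+1}:I)=J^{k+1}I^{-1}\cap D$ is indeed finitely generated, but $\V(J^{k+1}:I)\cap\V(I)=\{M\in\V(I):w_M(I)\le k\}$, not $\{w_M(I)\ge k+1\}$. This is harmless---knowing that $\{w_M(I)\le k\}$ is $\V(\text{f.g.})$ inside $\V(I)$ already makes it clopen there, and continuity follows---but the write-up needs correcting.

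The substantive gap is in \ref{prop:caratt-VI-crit:cont}$\Rightarrow$\ref{prop:caratt-VI-crit:empty}, exactly where you flag it: from a clopen $U\subseteq\V(I)$ one cannot directly extract a \emph{finitely generated} radical ideal with zero set $U$, and your Lemma~\ref{lemma:densita-basso}-style argument does not supply finite generation. The paper avoids this by proving \ref{prop:caratt-VI-crit:cont}$\Rightarrow$\ref{prop:caratt-VI-crit:fact} instead: set $X_t:=\nu_I^{-1}([t,\infty))$ for $1\le t\le s:=\max\nu_I$, let $J_t$ be the radical ideal with $\V(J_t)=X_t$ (no finiteness claimed yet), and observe that $\nu_{J_1\cdots J_s}=\nu_I$, so $I=J_1\cdots J_s$ by injectivity of $\Psi$. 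Now the key trick: since $I$ is invertible and equals the product $J_1\cdots J_s$, each factor $J_t$ is automatically invertible, hence finitely generated. Finite generation thus comes for free from the factorization of $I$, not from the topology of $U$. With \ref{prop:caratt-VI-crit:fact} in hand, \ref{prop:caratt-VI-crit:empty} is immediate (any $M\in\V(I)$ contains some $J_t$). This invertibility trick is the idea your strategy is missing; once you have it, the ``main obstacle'' you identify disappears.
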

\begin{proof}
\ref{prop:caratt-VI-crit:empty} $\Longrightarrow$ \ref{prop:caratt-VI-crit:rad} Suppose that $\V(I)\cap\inscrit_X(D)=\emptyset$. For every $M\in\V(I)$, let $J_M\in\Inv_X(D)$ be a radical ideal contained in $M$; then, $\{\V(J_M+I)\mid M\in\V(I)\}$ is an open cover of $\V(I)$ (with respect to the inverse topology), and since $\V(I)$ is compact in the inverse topology there is a finite subcover $\V(J_1+I),\ldots\V(J_n+I)$. Let $J:=\bigcap_i(J_i+I)$. Then, $J$ is radical since each $J_i+I$ is radical, it contains $I$, and is contained in any $M\in\V(I)$; hence $J=\rad(I)$. Moreover, $J$ is finitely generated since each $J_i+I$ is finitely generated and $D$ is a Pr\"ufer domain, and its support is contained in $X$ since the same happens for each $J_i+I$. The claim is proved.

\ref{prop:caratt-VI-crit:rad} $\Longrightarrow$ \ref{prop:caratt-VI-crit:fact} We first note that, if $\rad(I)$ is finitely generated, then there is an integer $n$ such that $\rad(I)^n\subseteq I$; in particular, $\nu_I$ is bounded. Thus we can proceed by induction on $k:=\sup\nu_I$.

If $k=1$ then $I$ is radical and $I=I$ is a factorization. Suppose that the claim is true up to $k-1$. By hypothesis, $\rad(I)$ is a finitely generated ideal, and $\nu_{\rad(I)}=\chi_{\V(I)}$. Hence, setting $I_1:=I\cdot(\rad(I))^{-1}$, we have
\begin{equation*}
\nu_{I_1}(M)=\nu_I(M)-\nu_{\rad(I)}(M)=\nu_I(M)-1
\end{equation*}
for all $M\in\V(I)$. Hence, $\sup_{\nu_{I_1}}=k-1$, and by inductive hypothesis there are radical ideals $J_2,\ldots,J_n$ such that $I_1=J_2\cdots J_n$. Hence, $I=\rad(I)\cdot J_2\cdots J_n$ is a factorization into radical ideals. Note also that each $J_i$ is invertible (and thus finitely generated) since $I$ is invertible.

\ref{prop:caratt-VI-crit:fact} $\Longrightarrow$ \ref{prop:caratt-VI-crit:empty} Let $M\in\V(I)$. Since $I=J_1\cdots J_n\subseteq M$, there must be an $i$ such that $J_i\subseteq M$; hence, $J_i$ is a finitely generated radical ideal inside $M$, and $J_i\in\Inv_X(D)$ since $I\subseteq J_i$. Thus $M\notin\inscrit(D)$ and $\V(I)\cap\inscrit_X(D)=\emptyset$.

\ref{prop:caratt-VI-crit:fact} $\Longrightarrow$ \ref{prop:caratt-VI-crit:cont}  If $I=J_1\cdots J_n$, we have $\nu_I=\nu_{J_1}+\cdots+\nu_{J_n}$. Since $J_i$ is radical, we have $\nu_{J_i}=\chi_{\V(J_i)}$; since $J_i$ is finitely generated, $\V(J_i)$ is clopen in the inverse topology, and thus its characteristic function is continuous. Hence $\nu_I$ is also continuous. 

\ref{prop:caratt-VI-crit:cont} $\Longrightarrow$ \ref{prop:caratt-VI-crit:fact}  If $\nu_I$ is continuous, consider $X_t:=\nu_I^{-1}([t,+\infty))$ for $t>0$. Each $X_t$ is clopen in the inverse topology, and since $X_t\subseteq\V(I)$ it is also closed in the Zariski topology; hence $X_t=\V(J_t)$ for some ideal $J_t$.

Let $s$ the maximum of $\nu_I$ (note that $\nu_I$ has compact support and thus it is bounded), and let $I':=J_1\cdots J_s$. Then, $\nu_{I'}=\nu_{J_1}+\cdots+\nu_{J_s}$, and thus $\nu_{I'}(P)$ is equal to the maximum $t$ such that $P\in X_t$, i.e., $\nu_{I'}(P)=\nu_I(P)$; it follows that $I=I'$. Hence $I=J_1\cdots J_s$ is a product of radical ideals, and each $J_i$ is invertible (hence finitely generated) since $I$ is invertible.
\end{proof}

\begin{cor}
Let $D$ be a Pr\"ufer domain and let $X\subseteq\Max(D)$ be a discrete splitting set of $D$. Then, the following are equivalent:
\begin{enumerate}[(i)]
\item $\inscrit_X(D)=\emptyset$;
\item $\rad(I)$ is finitely generated for all proper ideals $I\in\Inv_X(D)$;
\item every proper ideal $I\in\Inv_X(D)$ can be factored into a finite product of finitely generated radical ideals;
\item the ideal function of every $I\in\Inv_X(D)$ is continuous.
\end{enumerate}
\end{cor}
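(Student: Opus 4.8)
The plan is to deduce this corollary directly from Proposition \ref{prop:caratt-VI-crit} by quantifying over all proper ideals $I\in\Inv_X(D)$. Indeed, Proposition \ref{prop:caratt-VI-crit} asserts that, for a fixed proper ideal $I\in\Inv_X(D)$, each of the conditions ``$\rad(I)$ is finitely generated'', ``$I$ factors as a finite product of finitely generated radical ideals'', and ``$\nu_I$ is continuous'' is equivalent to $\V(I)\cap\inscrit_X(D)=\emptyset$. Taking the conjunction of these equivalences over all proper $I\in\Inv_X(D)$, we obtain that conditions (ii), (iii) and (iv) of the corollary are each equivalent to the single statement $(\ast)$: \emph{for every proper ideal $I\in\Inv_X(D)$ one has $\V(I)\cap\inscrit_X(D)=\emptyset$}.

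It remains to show that $(\ast)$ is equivalent to (i), i.e.\ to $\inscrit_X(D)=\emptyset$. The implication (i)$\Rightarrow(\ast)$ is immediate, since if $\inscrit_X(D)$ is empty its intersection with any set is empty. For the converse I would argue by contraposition: suppose $\inscrit_X(D)\neq\emptyset$ and pick $M\in\inscrit_X(D)\subseteq X\subseteq\Max(D)$. Since $M$ is maximal, $\V(M)=\{M\}\subseteq X$, so Lemma \ref{lemma:densita-basso} yields an ideal $J\in\Inv_X(D)$ with $J\subseteq M$. This $J$ is a proper ideal of $D$, and $M\in\V(J)$, so $\V(J)\cap\inscrit_X(D)\neq\emptyset$, contradicting $(\ast)$. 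Hence $(\ast)$ forces $\inscrit_X(D)=\emptyset$, completing the chain of equivalences.

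There is essentially no obstacle here, as the entire content lies in Proposition \ref{prop:caratt-VI-crit}; this corollary is just its global reformulation. The only point worth flagging is the appeal to Lemma \ref{lemma:densita-basso}, which guarantees that every maximal ideal of $X$ — critical or not — contains some proper member of $\Inv_X(D)$, and this is precisely what allows an arbitrary critical ideal to be exhibited as a point of $\V(I)$ for a suitable $I\in\Inv_X(D)$.
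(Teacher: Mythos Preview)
Your argument is correct and is exactly the intended deduction: the paper states the corollary without proof, treating it as immediate from Proposition~\ref{prop:caratt-VI-crit}, and your write-up spells out precisely that deduction, including the one nontrivial point (the use of Lemma~\ref{lemma:densita-basso} to witness a critical maximal ideal as lying in $\V(J)$ for some proper $J\in\Inv_X(D)$). The only minor wrinkle is that condition (iv) is stated for \emph{all} $I\in\Inv_X(D)$, not just proper ones, whereas Proposition~\ref{prop:caratt-VI-crit} applies only to proper ideals; you might add a line noting that the general case follows from the proper case via Lemma~\ref{lemma:JL-1} and $\nu_{JL^{-1}}=\nu_J-\nu_L$.
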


The last point of the previous corollary can be rephrased by saying that, under the assumption $\inscrit_X(D)=\emptyset$, the map $\Inv_X(D)\longrightarrow\insfunct(X,\insZ)$ can be actually thought of as a map $\Inv_X(D)\longrightarrow\inscont(X,\insZ)$. In this case, we can actually determine the range of this map; we shall do it in a greater generality in the following section (see Lemma \ref{lemma:csd-continuous} and Proposition \ref{prop:freekernel}).

We now turn to bounded ideals and bounded-critical maximal ideals.
\begin{prop}\label{prop:caratt-VI-bound}
Let $D$ be a Pr\"ufer domain, let $X\subseteq\Max(D)$ be a discrete splitting set of $D$, and let $I\in\Inv_X(D)$. Then, $I$ is bounded if and only if $\V(I)\cap\bcrit_X(D)=\emptyset$.
\end{prop}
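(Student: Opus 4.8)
The plan is to prove the equivalence by following exactly the template of Proposition \ref{prop:caratt-VI-crit}, but with the ``bounded'' condition replacing ``radical'' and ``finitely generated'' throughout. The key translation is that boundedness of $\nu_I$ is an intrinsically local-to-global statement, so the two implications should split along the same lines: one direction uses compactness of $\V(I)$ in the inverse topology to pass from a pointwise witness to a global one, and the other direction is a direct estimate.

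First I would prove the easy direction, ``$I$ bounded $\Longrightarrow\V(I)\cap\bcrit_X(D)=\emptyset$''. If $\nu_I$ is bounded, then for every $M\in\V(I)$ the ideal $I$ itself is a bounded ideal in $\Inv_X(D)$ with $I\subseteq M$ (note $I\subseteq M$ precisely because $M\in\V(I)$), which immediately shows $M\notin\bcrit_X(D)$. This is essentially one line.

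For the converse, ``$\V(I)\cap\bcrit_X(D)=\emptyset\Longrightarrow I$ bounded'', I would mimic the proof of \ref{prop:caratt-VI-crit:empty} $\Longrightarrow$ \ref{prop:caratt-VI-crit:rad}. For each $M\in\V(I)$ pick a bounded ideal $J_M\in\Inv_X(D)$ with $J_M\subseteq M$; then $\{\V(J_M+I)\mid M\in\V(I)\}$ is a cover of $\V(I)$ by sets that are clopen in the inverse topology (since $J_M+I$ is finitely generated, being a sum of invertible ideals), and $\V(I)$ is compact in the inverse topology, so a finite subcover $\V(J_1+I),\dots,\V(J_n+I)$ exists. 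Now I want to bound $\nu_I$ on $\V(I)$. The point is that on $\V(J_i+I)$ the ideal $J_i+I$ is comparable to $I$: more precisely, for $M\in\V(J_i+I)$ we have $ID_M\supseteq (J_i+I)D_M$, hence $\nu_I(M)\le\nu_{J_i+I}(M)\le\nu_{J_i}(M)$, and since $J_i$ is bounded, $\nu_{J_i}$ is bounded, say by $b_i$. Taking $b:=\max_i b_i$ gives $\nu_I(M)\le b$ for every $M\in\V(I)$, and $\nu_I$ vanishes off $\V(I)$, so $I$ is bounded. (Here I should double-check the monotonicity $\nu_{A}\le\nu_{B}$ when $A\subseteq B$ are evaluable on $X$, which follows from the definition of $w_M$ via the valuation $v'_M$ on the DVR $D_M/QD_M$; this is the analogue of the observation used implicitly in Lemma \ref{lemma:prod-evaluable} and Proposition \ref{prop:semicont}.)

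The main obstacle I anticipate is the comparability step: ensuring that having a \emph{bounded} witness $J_i$ inside $M$ actually forces $\nu_I(M)$ to be bounded \emph{on a neighborhood} of $M$, not just at $M$ itself. The clean way around this is precisely the inequality $\nu_I\le\nu_{J_i+I}$ on $\V(J_i+I)$ together with $\nu_{J_i+I}\le\nu_{J_i}$ on all of $X$ (since $J_i\subseteq J_i+I$), so that the bound $b_i$ for $\nu_{J_i}$ transfers to $\nu_I$ on the whole piece $\V(J_i+I)$ of the cover. Once this monotonicity of ideal functions under inclusion is recorded, the argument is routine and parallels Proposition \ref{prop:caratt-VI-crit} closely. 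One should also note, as in that proof, that each $J_i+I$ is automatically in $\Inv_X(D)$ because $\supp(J_i+I)\subseteq\supp(J_i)\cup\supp(I)\subseteq X$, so the witnesses stay within the relevant group.
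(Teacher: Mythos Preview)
The easy direction is fine, but the hard direction contains a genuine error in the monotonicity step. You claim that for $M\in\V(J_i+I)$ we have $ID_M\supseteq (J_i+I)D_M$ and hence $\nu_I(M)\le\nu_{J_i+I}(M)$. Both the containment and the inequality go the wrong way: since $I\subseteq J_i+I$, we have $ID_M\subseteq(J_i+I)D_M$, and because larger ideals have smaller valuation in the DVR $D_M/QD_M$, the correct inequality is $\nu_I(M)\ge\nu_{J_i+I}(M)$. (Your second inequality $\nu_{J_i+I}\le\nu_{J_i}$ is correct, for exactly this reason.) So what you actually obtain is $\nu_I(M)\ge\nu_{J_i+I}(M)\le\nu_{J_i}(M)$, which gives no upper bound on $\nu_I$.

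This is not a cosmetic slip: an upper bound on $\nu_{J_i}$ simply does not transfer to an upper bound on $\nu_I$ through sums $J_i+I$, since passing to a sum only \emph{decreases} the ideal function. The paper closes this gap with an extra idea. After extracting the finite cover by $L_{M_i}:=I+J_{M_i}$, it forms the \emph{product} $L:=L_{M_1}\cdots L_{M_k}$. Because the $\V(L_{M_i})$ cover $\V(I)$ and each $L_{M_i}\supseteq I$, one gets $\V(L)=\V(I)$, i.e.\ $L\subseteq\rad(L)=\rad(I)$. Since $L$ is finitely generated, the standard fact that a finitely generated ideal contained in $\rad(I)$ has some power inside $I$ yields $L^t\subseteq I$ for some $t$. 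Now the inequality goes the right way: $\nu_I\le\nu_{L^t}=t(\nu_{L_{M_1}}+\cdots+\nu_{L_{M_k}})$, which is bounded because each $\nu_{L_{M_i}}\le\nu_{J_{M_i}}$ is bounded. The passage through the product and the radical is precisely the missing ingredient in your argument.
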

\begin{proof}
If $I$ is bounded, then no maximal ideals containing $I$ is bounded-critical, and thus $\V(I)\cap\bcrit_X(D)=\emptyset$.

Conversely, suppose that $\V(I)\cap\bcrit_X(D)=\emptyset$. For each $M\in \V(I)$, let $J_M\in\Inv_X(D)$ be a bounded finitely generated ideal such that $J_M\subseteq M$. (These ideals $J_M$ exist since no $M$ is bounded-critical.) Then, $L_M:=I+J_M$ is a bounded finitely generated ideal such that $I\subseteq L_M\subseteq M$. Therefore, $\{\V(L_M)\mid M\in \V(I)\}$ is an open cover of $\V(I)$ (with respect to the inverse topology). The space $\V(I)$ is compact with respect to the inverse topology, and thus we can find a finite subcover $\{\V(L_{M_1}),\ldots,\V(L_{M_k})\}$. Let $L:=L_{M_1}\cdots L_{M_k}$. Then, $L$ is finitely generated and $\V(L)=\V(I)$, i.e. $\rad(L)=\rad(I)$; therefore, there is a $t$ such that $L^t\subseteq I$. However,
\begin{equation*}
\nu_{L^t}=t\nu_L=t(\nu_{L_{M_1}}+\cdots+\nu_{L_{M_k}}),
\end{equation*}
and thus $\nu_{L^t}$ is bounded; hence $\nu_I\leq\nu_{L^t}$ is bounded too. The claim is proved.
\end{proof}

\begin{cor}\label{cor:SP-bounded}
Let $D$ be a Pr\"ufer domain and let $X\subseteq\Max(D)$ be a discrete splitting set of $D$. Then, the following are equivalent:
\begin{enumerate}[(i)]
\item\label{cor:SP-bounded:critx} $\bcrit_X(D)=\emptyset$;
\item\label{cor:SP-bounded:prop} every proper ideal $I\in\Inv_X(D)$ is bounded;
\item\label{cor:SP-bounded:inv} every $I\in\Inv_X(D)$ is bounded.
\end{enumerate}
\end{cor}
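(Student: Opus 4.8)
The plan is to deduce everything from Proposition~\ref{prop:caratt-VI-bound}, together with Lemmas~\ref{lemma:JL-1}, \ref{lemma:densita-basso} and~\ref{lemma:prod-evaluable}, by running through the cycle of implications \ref{cor:SP-bounded:critx} $\Rightarrow$ \ref{cor:SP-bounded:inv} $\Rightarrow$ \ref{cor:SP-bounded:prop} $\Rightarrow$ \ref{cor:SP-bounded:critx}. The implication \ref{cor:SP-bounded:inv} $\Rightarrow$ \ref{cor:SP-bounded:prop} is immediate, since the proper integral ideals lying in $\Inv_X(D)$ form a subset of $\Inv_X(D)$.

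For \ref{cor:SP-bounded:critx} $\Rightarrow$ \ref{cor:SP-bounded:inv}, I would not apply Proposition~\ref{prop:caratt-VI-bound} to an arbitrary $I\in\Inv_X(D)$ directly, because that statement is phrased in terms of $\V(I)$, which agrees with $\supp(I)$ only for integral ideals. Instead, I would first use Lemma~\ref{lemma:JL-1} to write $I=JL^{-1}$ with $J=I\cap D$ and $L=(I+D)^{-1}$ integral ideals satisfying $\supp(J),\supp(L)\subseteq\supp(I)\subseteq X$; since $D$ is Pr\"ufer and $I$ is finitely generated, $J$ and $L$ are invertible, hence lie in $\Inv_X(D)$. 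Assuming $\bcrit_X(D)=\emptyset$ we trivially have $\V(J)\cap\bcrit_X(D)=\V(L)\cap\bcrit_X(D)=\emptyset$, so Proposition~\ref{prop:caratt-VI-bound} gives that $J$ and $L$ are bounded (when one of them equals $D$ its ideal function is identically $0$, so this is still fine). Then $\nu_J=\nu_I+\nu_L$ by Lemma~\ref{lemma:prod-evaluable}, so $\nu_I=\nu_J-\nu_L$ is bounded, i.e.\ $I$ is bounded.

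Finally, \ref{cor:SP-bounded:prop} $\Rightarrow$ \ref{cor:SP-bounded:critx} I would prove by contraposition: if some $M\in\bcrit_X(D)$, then $M$ is a proper ideal with $\V(M)=\{M\}\subseteq X$, so Lemma~\ref{lemma:densita-basso} produces a $J\in\Inv_X(D)$ with $J\subseteq M\subsetneq D$; this $J$ is a proper ideal in $\Inv_X(D)$, and by the very definition of bounded-critical no bounded ideal of $\Inv_X(D)$ is contained in $M$, so $J$ is unbounded, contradicting \ref{cor:SP-bounded:prop}. I do not anticipate a real obstacle here; the only point demanding a little care is the $\V$-versus-$\supp$ distinction, which is precisely why the reduction to integral ideals via Lemma~\ref{lemma:JL-1} is inserted in the step \ref{cor:SP-bounded:critx} $\Rightarrow$ \ref{cor:SP-bounded:inv}, the rest being a direct unwinding of the cited results.
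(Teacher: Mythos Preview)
Your proof is correct and uses essentially the same ingredients as the paper's, namely Proposition~\ref{prop:caratt-VI-bound} together with the $JL^{-1}$ decomposition of Lemma~\ref{lemma:JL-1}; the only difference is the ordering of the cycle: the paper places the $JL^{-1}$ reduction in the implication \ref{cor:SP-bounded:prop}$\Rightarrow$\ref{cor:SP-bounded:inv} and deduces \ref{cor:SP-bounded:critx}$\iff$\ref{cor:SP-bounded:inv} directly from Proposition~\ref{prop:caratt-VI-bound}, whereas you put the reduction in \ref{cor:SP-bounded:critx}$\Rightarrow$\ref{cor:SP-bounded:inv} and close the loop \ref{cor:SP-bounded:prop}$\Rightarrow$\ref{cor:SP-bounded:critx} via Lemma~\ref{lemma:densita-basso}. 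Your extra care about the $\V$-versus-$\supp$ distinction is well placed, since Proposition~\ref{prop:caratt-VI-bound} is really stated for integral ideals.
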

\begin{proof}
\ref{cor:SP-bounded:critx} $\iff$ \ref{cor:SP-bounded:inv} follows from Proposition \ref{prop:caratt-VI-bound}. \ref{cor:SP-bounded:inv} $\Longrightarrow$ \ref{cor:SP-bounded:prop} is obvious. To show \ref{cor:SP-bounded:prop} $\Longrightarrow$ \ref{cor:SP-bounded:inv}, let $I\in\Inv_X(D)$. Then, $I=JL^{-1}$ for some proper ideals $J,L$ of $D$ with $\supp(I)\subseteq X$ (Lemma \ref{lemma:JL-1}), so that $J,L\in\Inv_X(D)$ and thus $\nu_I=\nu_J-\nu_L$ is bounded.
\end{proof}

\begin{oss}
As done in the almost Dedekind case \cite[Definitions 3.1 and 3.2]{bounded-almded} it is also possible to define the set of \emph{$n$-critical ideals} $\phantom{}^n\inscrit_X(D)$ as the set of all $M\in X$ such that no $I\in\Inv_X(D)$ with $I\subseteq M$ satisfies $\sup\nu_I\leq n$. However, this level of detail is not needed for our study.
\end{oss}

To conclude this section, we show that every discrete splitting set contains ideals that are not critical. We again follow the lead of the almost Dedekind domain case (cfr Lemma 4.4 and Theorem 4.5 of \cite{bounded-almded}).
\begin{defin}
If $X\subseteq\Max(D)$, the \emph{$X$-Jacobson radical} of $D$ is
\begin{equation*}
\Jac_X(D):=\bigcap\{P\mid P\in X\}.
\end{equation*}
\end{defin}

\begin{lemma}\label{lemma:dense}
Let $D$ be a Pr\"ufer domain and let $X\subseteq\Max(D)$ be a discrete splitting set of $D$. Suppose that $\V(\Jac_X(D))=X$ and $\bcrit_X(D)=X$. If $I$ is a proper ideal in $\Inv_X(D)$, then for every $n\inN$ the set 
\begin{equation*}
Y_n:=\nu_I^{-1}((n,+\infty))=\{M\in X\mid v_M(I)>n\}
\end{equation*}
is dense in $X$, with respect to the inverse topology.
\end{lemma}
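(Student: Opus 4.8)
The plan is to argue by contradiction: suppose $Y_n$ is not dense in $X$ with respect to the inverse topology. Since $\V(I)=X$ (because $I$ is proper and $\V(\Jac_X(D))=X$ forces every $M\in X$ to contain $I$... actually more carefully, $I\in\Inv_X(D)$ proper has $\V(I)\subseteq X$, but we also need $\V(I)=X$; this follows because $\Jac_X(D)\subseteq\rad(I)$ is false in general — let me think again). Actually, the density should be understood relative to the closed set $\V(I)$, or we should note that $Y_n\subseteq\V(I)\subseteq X$ and we want density in $X$. Let me reconsider: since $\V(\Jac_X(D))=X$, every maximal ideal in $X$ contains $\Jac_X(D)$, but that does not make it contain $I$. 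So the correct reading must be that we want $Y_n$ dense in $X$, and the obstruction comes from the existence of a nonempty inverse-open subset $U\subseteq X$ disjoint from $Y_n$.

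Here is the approach. Suppose for contradiction that $U:=X\setminus\cl_\inverse(Y_n)$ is nonempty; $U$ is inverse-open in $X$. Since the inverse topology on $X\subseteq\Max(D)$ has the sets $\D(J)\cap X$ (for $J$ finitely generated) as a basis of opens — more precisely, the sets $\V(J)\cap X$ form a subbasis of closed sets — we can shrink $U$ to a nonempty basic inverse-open set and, using that $\V(\Jac_X(D))=X$, find a nonzero element and hence a finitely generated ideal $J\subseteq\Jac_X(D)$ with $\emptyset\neq\D(J)\cap X\subseteq U$; equivalently $\V(J)\cap X\supseteq\cl_\inverse(Y_n)$, so that $\nu_I\leq n$ on $\D(J)\cap X$. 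Pick $M_0\in\D(J)\cap X$. By hypothesis $M_0\in\bcrit_X(D)$, so there is no bounded ideal in $\Inv_X(D)$ contained in $M_0$. The strategy is now to manufacture such a bounded ideal: work inside the overring $\Omega(\D(J)\cap X)$ or directly, and consider the ideal $I':=I+J^{n+1}$, or better $I\cap(\text{something built from }J)$, designed so that its ideal function is $\min(\nu_I,\,(n+1)\chi_{\V(J)})$ or similar, hence bounded by $n+1$ on all of $X$, while still being contained in $M_0$. The key computation: since $J\subseteq\Jac_X(D)$ is finitely generated with $\V(J)\cap X\supseteq\cl_\inverse(Y_n)$, for $M\in X$ we have $\nu_{J^{n+1}}(M)=(n+1)\chi_{\V(J)}(M)$; outside $\V(J)$, $\nu_I(M)\leq n$, so $\nu_{I+J^{n+1}}(M)=\min(\nu_I(M),(n+1)\chi_{\V(J)}(M))$ equals $\nu_I(M)\leq n$ there, and on $\V(J)$ it is $\min(\nu_I(M),n+1)\leq n+1$. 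Thus $I+J^{n+1}$ is bounded, lies in $\Inv_X(D)$, and — since $J\subseteq M_0$ and $I\subseteq M_0$ — is contained in $M_0$, contradicting $M_0\in\bcrit_X(D)$.

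The main obstacle I expect is getting the basic inverse-open sets of $X$ into the form $\D(J)\cap X$ for a finitely generated $J$ contained in $\Jac_X(D)$ — this is where the hypothesis $\V(\Jac_X(D))=X$ is essential, and one must check that intersecting a given finitely generated $J$ with $\Jac_X(D)$ (or replacing $J$ by a finitely generated subideal of $J\cdot\Jac_X(D)$, using that $D$ is Pr\"ufer so finitely generated ideals behave well) does not change $\D(J)\cap X$. A second, more delicate point is verifying the ideal-function identity $\nu_{I+J^{n+1}}=\min(\nu_I,\nu_{J^{n+1}})$ pointwise on $X$: this reduces, for each $P\in X$, to the statement that in the valuation ring $D_P$ (after passing to $D_P/QD_P$, $Q=\bigcap_k P^k$) the sum of two ideals has value the minimum of the values, which is immediate, together with the observation that $I+J^{n+1}$ is finitely generated (as $D$ is Pr\"ufer) with support inside $\V(I)\cup\V(J^{n+1})\subseteq X$, so it genuinely lies in $\Inv_X(D)$. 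Once these two points are in place, the contradiction with bounded-criticality of $M_0$ closes the argument.
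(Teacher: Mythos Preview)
Your overall strategy matches the paper's, but you have the inverse topology backwards, and this breaks the argument. For $J$ finitely generated, $\V(J)$ is inverse-\emph{open} and $\D(J)$ is inverse-\emph{closed} (see the paper's definition in the preliminaries, or the proofs of Proposition~\ref{prop:semicont} and Theorem~\ref{teor:dense}). A basic nonempty inverse-open subset of $X$ avoiding $\cl_\inverse(Y_n)$ therefore has the form $\V(J)\cap X$, not $\D(J)\cap X$. With your convention, the added requirement $J\subseteq\Jac_X(D)$ forces $X\subseteq\V(J)$, i.e.\ $\D(J)\cap X=\emptyset$, so there is no $M_0$ to pick; and your later assertion ``$J\subseteq M_0$'' directly contradicts $M_0\in\D(J)$. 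The formula $\nu_{J^{n+1}}=(n+1)\chi_{\V(J)}$ is also unjustified, since it needs $\nu_J\equiv 1$ on $\V(J)\cap X$, which you have not arranged.

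Once the topology is corrected the argument becomes exactly the paper's, and simpler: take $J$ finitely generated with $\emptyset\neq\V(J)\cap X$ disjoint from $\cl_\inverse(Y_n)$ and set $J':=I+J$ (no power needed). For $M\notin\V(J)$ one gets $\nu_{J'}(M)=0$; for $M\in\V(J)\cap X$ one has $M\notin Y_n$, so $\nu_{J'}(M)\leq\nu_I(M)\leq n$. Thus $J'$ is bounded and lies in $\Inv_X(D)$. The $\Jac_X$ hypothesis enters only to secure $J'\neq D$: the paper's proof actually invokes $I\subseteq\Jac_X(D)$ (so any $M\in\V(J)\cap X$ contains $I$ and hence $J'$), which is precisely the form in which the lemma is applied in Theorem~\ref{teor:exist-noncrit} and is somewhat stronger than the stated hypothesis $\V(\Jac_X(D))=X$ --- so your unease about deriving $I\subseteq M_0$ from the stated hypotheses alone is well-founded.
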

\begin{proof}
Suppose that $Y_n$ is not dense: then, there is a finitely generated ideal $J$ such that the clopen set $\V(J)$ does not meet the closure of $Y_n$. Let $J':=J+I$: then, $J'\neq D$ since every maximal ideal containing $J$ contains also $J'$ (since $I\subseteq\Jac_X(D)$), and $\supp(J')\subseteq X$ since $J$ contains $I$. Let $M\in X$: then, $w_M(J')=0$ if $M\in Y_n$ (since $M\notin\V(J)$), while $w_M(J')\leq w_M(I)\leq n$ if $M\notin Y_n$. Hence, $\nu_{J'}$ is bounded by $n$, against the fact that $\bcrit_X(D)=X$. Thus $Y_n$ is dense in $X$.
\end{proof}

\begin{teor}\label{teor:exist-noncrit}
Let $D$ be a Pr\"ufer domain and let $X\subseteq\Max(D)$ be a discrete splitting set of $D$. Then, $\inscrit_X(D)\neq X$.
\end{teor}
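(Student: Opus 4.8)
The plan is to suppose, for contradiction, that every maximal ideal in $X$ is critical, i.e.\ $\inscrit_X(D)=X$ (and $X\neq\emptyset$, the claim being vacuous otherwise), and to exhibit a radical ideal of $\Inv_X(D)$ contained in some maximal ideal of $X$.

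The first step is a reduction that puts us in a topologically convenient situation. Pick $M_0\in X$; since $X$ is a splitting set it is open in the inverse topology, so by Proposition~\ref{prop:splitting} there is a finitely generated ideal $J_0\subseteq M_0$ with $\V(J_0)\subseteq X$. I claim we may replace $X$ by $X':=\V(J_0)$: it is again a discrete splitting set (its complement is $\D(J_0)$, which is compact), it satisfies $\V(\Jac_{X'}(D))=\V(\rad J_0)=X'$, and $\inscrit_{X'}(D)=X'$ because a radical ideal of $\Inv_{X'}(D)$ contained in some maximal ideal is in particular such an ideal of $\Inv_X(D)$. Crucially, $X'=\V(J_0)$ is a spectral space of Krull dimension $0$, so that on it the Zariski, inverse and constructible topologies coincide and it is a compact Hausdorff totally disconnected space, hence a Baire space. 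From now on I assume $X$ has all these properties.

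Now I split according to whether $\Inv_X(D)$ contains a bounded proper ideal, i.e.\ whether $\bcrit_X(D)=X$ or not. If $\bcrit_X(D)=X$: apply Lemma~\ref{lemma:dense} with $I:=J_0$ (which has $\V(I)=X$, so $I\subseteq\Jac_X(D)$); it yields that each $Y_n=\nu_I^{-1}((n,+\infty))$ is dense in $X$. By Proposition~\ref{prop:semicont} $\nu_I$ is lower semicontinuous, so $Y_n=X\setminus\nu_I^{-1}([0,n])$ is also open; since $X$ is a nonempty Baire space, $\bigcap_n Y_n$ is dense, hence nonempty — but a point of this intersection would be an $M$ with $\nu_I(M)=+\infty$, impossible for a $\insZ$-valued function, a contradiction. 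If instead $\bcrit_X(D)\neq X$, fix a bounded proper finitely generated $J\in\Inv_X(D)$ and set $k:=\sup\nu_J<\infty$. Then $A:=\nu_J^{-1}(\{k\})$ is nonempty (a bounded $\insZ$-valued function attains its supremum) and, by Proposition~\ref{prop:semicont}, open in $X$; since $X$ is totally disconnected I may choose a nonempty clopen $C\subseteq A$, and write $C=\D(e)\cap X$ for some $e\in D$ with $e^2-e\in J_0$. Consider the finitely generated ideal $L:=J+(1-e)^kD$. One checks that $\V(L)=\V(J)\cap C=C$ and that on $C$ one has $\nu_L=\min(\nu_J,k\,\nu_{(1-e)})\equiv k$, since $\nu_J\equiv k$ on $C\subseteq A$ while $k\,\nu_{(1-e)}\geq k$ on $C=\V(1-e)\cap X$. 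Hence $L=\rad(L)^k$ at every localization, so $\rad(L)$ is invertible; being a radical ideal of $\Inv_X(D)$ contained in every maximal ideal of the nonempty set $C$, it contradicts $\inscrit_X(D)=X$.

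The main obstacle is precisely the coexistence of these two cases. Lemma~\ref{lemma:dense}, which supplies the Baire-category mechanism, requires $\bcrit_X(D)=X$, and this hypothesis cannot be restored by shrinking $X$ when it fails, since passing to a subset only makes $\bcrit$ smaller; so the ``bounded'' regime has to be dealt with by the separate, hands-on construction of $L$ above (this is the role played by bounded almost Dedekind domains in \cite{bounded-almded}). The delicate points in that construction — which are the creative part of the argument — are seeing that one can cut out the clopen $C$ by an honest finitely generated ideal inside $\V(J)$ and that the resulting $\rad(L)$ comes out finitely generated, i.e.\ invertible; both of these lean on having reduced to the zero-dimensional, Boolean space $X=\V(J_0)$.
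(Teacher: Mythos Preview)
Your argument is correct and follows the same three–stage architecture as the paper's proof: reduce so that $\V(\Jac_X D)=X$, run the Baire–category contradiction when $\bcrit_X(D)=X$, and then pass from a bounded ideal to a finitely generated radical one. One small correction: the idempotent $\bar e$ you invoke lives in $D/\rad(J_0)$ (which is von Neumann regular because $\V(J_0)\subseteq\Max(D)$), so the lift only satisfies $e^2-e\in\rad(J_0)$, not $e^2-e\in J_0$; this is all you use, so nothing else changes.

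The genuine divergence from the paper is in the last step. Once a bounded proper $J\in\Inv_X(D)$ is available, the paper argues abstractly via the $v$-closure: with $R:=\rad(J)$ one has $R^n\subseteq J$ for some $n$, hence $((R^v)^n)^v\subseteq J\subsetneq D$, so $R^v\neq D$, and any $L=D\cap\alpha D$ with $R\subseteq L\subsetneq D$ is automatically a radical invertible ideal (because $\nu_L\leq\nu_R\leq 1$). This is shorter and avoids Stone duality entirely. Your route instead localises to the maximum locus of $\nu_J$, cuts out a clopen $C$ via an idempotent, and builds $L$ with $\nu_L\equiv k$ on $C$, then uses $\rad(L)^k=L$ to force $\rad(L)$ invertible. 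This is more explicit and yields a concrete description of the non-critical set you produce, at the cost of the extra topological/idempotent machinery. Your reduction (shrinking $X$ to $\V(J_0)$ inside $D$) is also slightly cleaner than the paper's passage to the overring $T=(1+I)^{-1}D$, though the two are interchangeable.
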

\begin{proof}
We start by showing that $\bcrit_X(D)\neq X$. Suppose on the other hand that $\bcrit_X(D)= X$, and suppose first that $\Jac_X(D)$ contains an ideal $I\in\Inv_X(D)$: then, $\nu_I$ is not bounded. Let $Y_n:=\nu_I^{-1}((n,+\infty))=\{M\in\Max(D)\mid w_M(I)>n\}$. By Proposition \ref{prop:semicont}, each $Y_n$ is open in $X$, and by Lemma \ref{lemma:dense} they are all dense in $X$. Therefore, $\{Y_n\}_{n\inN}$ is a family of open and dense subsets of $X$; since $X$ is Hausdorff in the inverse topology (since the inverse and the constructible topology agree on $\Max(D)$ \cite[Corollary 4.4.9]{spectralspaces-libro}), by the Baire category theorem its intersection must be dense, and in particular nonempty. However, if $M\in\bigcap_nY_n$, then $w_M(I)>n$ for every $n$; since this cannot happen, it follows that we cannot have $\bcrit_X(D)=X$.

Consider now the general case, and let $I\in\Inv_X(D)$. Let $T:=(1+I)^{-1}D=\bigcap\{D_M\mid M\in\V(I)\}$: then, $T$ is an overring of $D$ whose maximal ideals are exactly the extensions of the prime ideals containing $I$. Hence, $X':=\{PT\mid P\in\V(I)\}=\V(IT)=\Max(T)$ is a splitting set of $T$ (since $IT$ is finitely generated over $T$) and $\Jac_{X'}(D)$ contains $I$, so that in particular it contains finitely generated ideals with support inside $X$. By the previous part of the proof, there is a proper finitely generated ideal $J$ of $T$ such that $\nu_J:X'\longrightarrow\insN$ is bounded. Let $J_0$ be a finitely generated ideal of $D$ such that $J=J_0T$, and let $L:=J_0+I$. Then, $\nu_L(M)=w_M(L)=0$ if $M\notin\V(I)$, while $\nu_L(M)=w_M(L)\leq w_{MT}(J)=\nu_J(MT)$ if $M\in\V(I)$. Furthermore, $L\neq D$ since $LT$ is contained in every maximal ideal of $T$ containing $J$. Therefore, $\nu_L$ is a nonzero bounded function, and so $\bcrit_X(D)\neq X$.

Since $\bcrit_X(D)\neq X$, there is a bounded finitely generated ideal $I\in\Inv_X(D)$. Let $J:=\rad(I)$; then, $J^n\subseteq I$ for some $n$ (since $I$ is bounded) and thus
\begin{equation*}
I=I^v\supseteq (J^n)^v=(J\cdots J)^v=(J^v\cdots J^v)^v=((J^v)^n)^v.
\end{equation*}
In particular, $J^v\neq D$, and so there is an $\alpha\in K$ such that $J\subseteq D\cap\alpha D\subsetneq D$. Hence, $L:=D\cap\alpha D$ is a radical finitely generated ideal containing $J$ (thus with support inside $X$) and in particular every maximal ideal containing $M$ is not critical. Hence $\inscrit_X(D)\neq X$.
\end{proof}

\section{The critical sequence}
Let $D$ be a Pr\"ufer domain and let $X\subseteq\Max(D)$ be a discrete splitting set of $D$. To simplify the notation, given an overring $T$ of $D$, we set $XT:=\{PT\mid P\in X,~PT\neq T\}$.

As for the case of almost Dedekind domains (see \cite[Section 5]{SP-scattered}), it is possible to recursively define a sequence $\{\inscrit_X^\alpha(D)\}_\alpha$ of subsets of $X$ and a sequence $\{T_\alpha\}_\alpha$ of overrings in the following way:
\begin{itemize}
\item $\inscrit_X^0(D):=X$;
\item if $\alpha=\gamma+1$ is a successor ordinal, then
\begin{equation*}
\inscrit_X^\alpha(D):=\{P\in X\mid PT_\gamma\in\inscrit_{XT_\gamma}(T_\gamma)\};
\end{equation*}
\item if $\alpha$ is a limit ordinal, then
\begin{equation*}
\inscrit_X^\alpha(D):=\bigcap_{\gamma<\alpha}\inscrit_X^\gamma(D);
\end{equation*}
\item for every ordinal $\alpha$,
\begin{equation*}
T_\alpha:=\bigcap\{D_P\mid P\in\inscrit_X^\alpha(D)\cup(\Spec(D)\setminus X)\}=\Omega(X\setminus\inscrit_X^\alpha(D)).
\end{equation*}
\end{itemize} 

We summarize the main properties of this sequence in the following proposition.
\begin{prop}\label{prop:Talpha}
Preserve the notation above. Then, the following hold.
\begin{enumerate}[(a)]
\item\label{prop:Talpha:closed} $\{\inscrit_X^\alpha(D)\}_\alpha$ is a descending sequence of closed subsets of $X$, with respect to the inverse topology.
\item\label{prop:Talpha:overrings} $\{T_\alpha\}_\alpha$ is an ascending sequence of overrings of $D$.
\item\label{prop:Talpha:ext1} If $\alpha=\beta+1$, then $\Max(T_\alpha)\cap XT_\alpha=\{PT_\alpha\mid P\in\inscrit_{XT_\beta}(T_\beta)\}$.
\item\label{prop:Talpha:ext2} For every $\alpha$, we have
\begin{equation*}
\Max(T_\alpha)\cap XT_\alpha=\{PT_\alpha\mid P\in\inscrit_X^\alpha(D)\}.
\end{equation*}
\end{enumerate}
\end{prop}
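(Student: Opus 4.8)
The plan is to prove parts \ref{prop:Talpha:closed}--\ref{prop:Talpha:ext2} in order, since each one feeds into the next. For \ref{prop:Talpha:closed}, I would argue by transfinite induction on $\alpha$. The base case $\inscrit_X^0(D)=X$ is closed in $X$ trivially. For a limit ordinal, an intersection of closed sets is closed, so there is nothing to do. The successor case $\alpha=\gamma+1$ is the one requiring an argument: I need that $\inscrit_X^\alpha(D)=\{P\in X\mid PT_\gamma\in\inscrit_{XT_\gamma}(T_\gamma)\}$ is closed in the inverse topology of $X$. The natural approach is to use the map $X\to\Max(T_\gamma)$ (or rather $X\cap(\Spec D\setminus\inscrit_X^\gamma(D))^{\complement}$, i.e.\ the part of $X$ that survives in $T_\gamma$) sending $P\mapsto PT_\gamma$; by the theory of Nagata transforms of Pr\"ufer domains this is a homeomorphism from $\inscrit_X^\gamma(D)$ onto $\Max(T_\gamma)\cap XT_\gamma$ for the inverse topologies, after which I only need that $\inscrit_{XT_\gamma}(T_\gamma)$ is closed in $XT_\gamma$. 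That last fact should follow from Proposition \ref{prop:semicont}: the critical locus is an intersection of sets of the form $\V(I)$ with $I$ ranging over the radical finitely generated ideals in $\Inv_{XT_\gamma}(T_\gamma)$... actually, more directly, its complement is the union of the (inverse-)clopen sets $\D(I)$ as $I$ ranges over radical ideals in $\Inv_{XT_\gamma}(T_\gamma)$, hence open. I also need the descending property $\inscrit_X^{\alpha+1}(D)\subseteq\inscrit_X^\alpha(D)$: this holds because a critical maximal ideal of $T_\gamma$ pulls back into $\inscrit_X^\gamma(D)$ by construction (the map $P\mapsto PT_\gamma$ has domain $\inscrit_X^\gamma(D)$), and combined with the limit-ordinal clause and an easy induction this gives a genuine descending chain.

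For \ref{prop:Talpha:overrings}, I would again use transfinite induction, but here the real content is monotonicity: $T_\alpha=\Omega(X\setminus\inscrit_X^\alpha(D))$, and since $\alpha\mapsto\inscrit_X^\alpha(D)$ is descending (just proved), $\alpha\mapsto X\setminus\inscrit_X^\alpha(D)$ is ascending, hence the defining intersections $\bigcap\{D_P\mid P\in\inscrit_X^\alpha(D)\cup(\Spec(D)\setminus X)\}$ are over shrinking index sets, so the $T_\alpha$ form an ascending chain of overrings. The equality $T_\alpha=\Omega(X\setminus\inscrit_X^\alpha(D))$ asserted in the construction is just unwinding the definition of the Nagata transform: $\Omega(X\setminus\inscrit_X^\alpha(D))=\bigcap\{D_P\mid P\notin(X\setminus\inscrit_X^\alpha(D))^\uparrow\}$, and since $X=X^\uparrow$ one checks $(X\setminus\inscrit_X^\alpha(D))^\uparrow=X\setminus\inscrit_X^\alpha(D)$ using that each $\inscrit_X^\alpha(D)\subseteq\Max(D)$, so the complement of this upward set is exactly $\inscrit_X^\alpha(D)\cup(\Spec(D)\setminus X)$.

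Parts \ref{prop:Talpha:ext1} and \ref{prop:Talpha:ext2} are really one computation. For \ref{prop:Talpha:ext1} with $\alpha=\beta+1$: by Proposition \ref{prop:splitting} applied to $XT_\beta$ as a splitting set of $T_\beta$ (or by the description of which primes survive in a Nagata transform), the maximal ideals of $T_\alpha=\Omega(XT_\beta\setminus\inscrit_{XT_\beta}(T_\beta))$ that lie in $XT_\alpha$ are precisely the extensions of the maximal ideals in $\inscrit_{XT_\beta}(T_\beta)$ — i.e.\ the ones that were critical at stage $\beta$ are exactly the ones that do \emph{not} explode when we pass to $T_\alpha$. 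Then \ref{prop:Talpha:ext2} follows by composing: a prime $P\in X$ has $PT_\alpha\in\Max(T_\alpha)\cap XT_\alpha$ iff $P$ survives to a maximal ideal of $T_\alpha$, and chasing through the chain $D\subseteq T_\beta\subseteq T_\alpha$ together with \ref{prop:Talpha:ext1} and the definition $\inscrit_X^\alpha(D)=\{P\in X\mid PT_\beta\in\inscrit_{XT_\beta}(T_\beta)\}$ identifies this with $\{PT_\alpha\mid P\in\inscrit_X^\alpha(D)\}$; for limit $\alpha$ one intersects. The main obstacle I anticipate is the bookkeeping around the various identifications of primes under the successive extensions — making precise that $PT_\alpha$ is maximal in $T_\alpha$ exactly when $P$ is still ``critical at stage $\alpha$'', and that the homeomorphism $\inscrit_X^\gamma(D)\cong\Max(T_\gamma)\cap XT_\gamma$ is compatible with passing to $T_{\gamma+1}$ — i.e.\ checking that taking the critical locus commutes with the Nagata transform in the way the construction tacitly assumes. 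Everything else is formal transfinite induction once this compatibility is nailed down; I would cite \cite{fifolo_transactions} and the vacancy of Pr\"ufer domains (as in the proof of Proposition \ref{prop:splitting}) for the statements about which primes survive.
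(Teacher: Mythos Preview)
Your approach is essentially the same as the paper's: transfinite induction for \ref{prop:Talpha:closed}, monotonicity of the Nagata transform for \ref{prop:Talpha:overrings}, and the representation $T_\alpha=\Omega(X\setminus\inscrit_X^\alpha(D))$ for \ref{prop:Talpha:ext1}--\ref{prop:Talpha:ext2}. The only difference is that for the successor step in \ref{prop:Talpha:closed} the paper works directly in $X$ (pulling a radical $I\in\Inv_{XT_\gamma}(T_\gamma)$ back to a finitely generated $J$ of $D$ and noting $\V(J)\cap\inscrit_X^\alpha(D)=\emptyset$), while you factor through the homeomorphism $\inscrit_X^\gamma(D)\cong XT_\gamma$ and then use that closed-in-closed is closed---both are fine. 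One slip: the complement of the critical locus is the union of the sets $\V(I)$, not $\D(I)$, as $I$ ranges over radical ideals in $\Inv_{XT_\gamma}(T_\gamma)$ (non-critical means $I\subseteq M$, i.e.\ $M\in\V(I)$); with that correction your argument goes through.
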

\begin{proof}
\ref{prop:Talpha:closed} Clearly the sequence is descending; we show that its members are closed by induction on $\alpha$. If $\alpha=0$ the claim is trivial, as $\inscrit_X^0(D)=X$. Suppose that $\alpha=\gamma+1$ is a successor ordinal, and let $M\in X\setminus\inscrit_X^\alpha(D)$: then, $MT_\gamma$ is not critical in $T_\gamma$, and thus there is a radical ideal $I\in\Inv_{XT_\gamma}(T_\gamma)$ such that $I\subseteq MT_\gamma$. Let $J\in\Inv_X(D)$ be a finitely generated ideal of $D$ such that $JT_\gamma=I$: then, no prime ideal in  $XT_\gamma$ is critical, and thus $\V(J)\cap\inscrit_X^\alpha(D)$ is empty. Thus $X\setminus\inscrit_X^\alpha(D)$ is open with respect to the inverse topology, i.e., $\inscrit_X^\alpha(D)$ is closed.

If now $\alpha$ is a limit ordinal, then by definition $\inscrit_X^\alpha(D)=\bigcap_{\beta<\alpha}\inscrit^\beta_X(D)$, and so by induction $\inscrit_X^\alpha(D)$ is an intersection of closed subsets. Hence it is closed itself.

\ref{prop:Talpha:overrings} follows from the definition of $T_\alpha$ and the fact that $\{\inscrit_X^\alpha(D)\}_\alpha$ is descending. \ref{prop:Talpha:ext1} and \ref{prop:Talpha:ext2} follow from the representation $T_\alpha=\Omega(X\setminus\inscrit_X^\alpha(D))$.
\end{proof}

\begin{teor}\label{teor:SP-scat}
Preserve the notation above. There is an ordinal number $\beta$ such that $\inscrit_X^\beta(D)=\emptyset$.
\end{teor}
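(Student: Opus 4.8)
The plan is to run the standard stabilization argument for a descending transfinite chain of subsets, where Theorem \ref{teor:exist-noncrit} supplies \emph{strict} descent at every nonempty stage. First I would observe that $\inscrit_X^\alpha(D)$ and $T_\alpha$ are defined for every ordinal $\alpha$ (the recursion is legitimate: the successor clause depends only on $T_\gamma$, which in turn depends only on $\inscrit_X^\gamma(D)$, and the limit clause is an intersection), and that the sequence $\{\inscrit_X^\alpha(D)\}_\alpha$ is descending by Proposition \ref{prop:Talpha}\ref{prop:Talpha:closed}.

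The heart of the argument is the claim that $\inscrit_X^{\alpha+1}(D)\subsetneq\inscrit_X^\alpha(D)$ whenever $\inscrit_X^\alpha(D)\neq\emptyset$. To prove it, note that $T_\alpha$ is Pr\"ufer, being an overring of the Pr\"ufer domain $D$, and that $XT_\alpha$ is a discrete splitting set of $T_\alpha$ which, by Proposition \ref{prop:Talpha}\ref{prop:Talpha:ext2}, equals $\{PT_\alpha\mid P\in\inscrit_X^\alpha(D)\}$; this set is nonempty, since for $P\in\inscrit_X^\alpha(D)$ we have $T_\alpha\subseteq D_P$, so $PT_\alpha$ extends to the proper ideal $PD_P$ of $D_P$ and is therefore a proper ideal of $T_\alpha$, i.e. $PT_\alpha\in XT_\alpha$. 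Applying Theorem \ref{teor:exist-noncrit} to the pair $(T_\alpha,XT_\alpha)$ yields $\inscrit_{XT_\alpha}(T_\alpha)\subsetneq XT_\alpha$, so there is $P\in\inscrit_X^\alpha(D)$ with $PT_\alpha\notin\inscrit_{XT_\alpha}(T_\alpha)$, that is, $P\notin\inscrit_X^{\alpha+1}(D)$. Combined with the inclusion $\inscrit_X^{\alpha+1}(D)\subseteq\inscrit_X^\alpha(D)$ coming from Proposition \ref{prop:Talpha}\ref{prop:Talpha:closed}, this gives the strict inclusion.

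To finish, suppose for contradiction that $\inscrit_X^\alpha(D)\neq\emptyset$ for every ordinal $\alpha$. Then the claim applies at each successor step, and since the sequence is descending, for $\alpha<\beta$ we get $\inscrit_X^\beta(D)\subseteq\inscrit_X^{\alpha+1}(D)\subsetneq\inscrit_X^\alpha(D)$; hence $\alpha\mapsto\inscrit_X^\alpha(D)$ is a strictly decreasing, in particular injective, map from the proper class of all ordinals into the set $\mathcal{P}(X)$ of subsets of $X$, which is absurd. Therefore $\inscrit_X^\beta(D)=\emptyset$ for some ordinal $\beta$.

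The step I expect to require the most care is the passage to the overring $T_\alpha$: one must know that $XT_\alpha$ is again a discrete splitting set of $T_\alpha$ (so that $\inscrit_{XT_\alpha}(T_\alpha)$ is defined and Theorem \ref{teor:exist-noncrit} genuinely applies) and that contraction along $D\hookrightarrow T_\alpha$ identifies $XT_\alpha$ bijectively with $\{PT_\alpha\mid P\in\inscrit_X^\alpha(D)\}$, so that a non-critical prime of $XT_\alpha$ pulls back to a prime of $X$ that leaves $\inscrit_X^{\alpha+1}(D)$. These facts rest on Proposition \ref{prop:Talpha} together with the injectivity of the contraction map on spectra of overrings of Pr\"ufer domains and the stability of non-idempotence under such extensions; granting them, what remains is merely the observation that no strictly descending chain of subsets of a fixed set can be indexed by all the ordinals.
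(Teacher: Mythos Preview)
Your proposal is correct and follows essentially the same approach as the paper: both arguments hinge on applying Theorem~\ref{teor:exist-noncrit} to the pair $(T_\alpha,XT_\alpha)$ to rule out $\inscrit_{XT_\alpha}(T_\alpha)=XT_\alpha$, the only difference being that the paper phrases the cardinality step as ``the descending chain must stabilize, and the point of stabilization is empty'' rather than your contrapositive ``nonempty forces strict descent, which cannot persist through all ordinals.'' Your explicit attention to why $XT_\alpha$ is again a discrete splitting set (so that Theorem~\ref{teor:exist-noncrit} legitimately applies) is a point the paper leaves implicit.
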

\begin{proof}
Since $\{\inscrit_X^\alpha(D)\}_\alpha$ is a descending chain, there must be a $\beta$ such that $\inscrit_X^\beta(D)=\inscrit_X^{\beta+1}(D)$, and so $T_\beta=T_{\beta+1}$. If $\inscrit^\beta_X(D)\neq\emptyset$, by Proposition \ref{prop:Talpha} it follows that $\inscrit_{XT_\beta}(T_\beta)=XT_\beta$, contradicting Theorem \ref{teor:exist-noncrit}. Thus $XT_\beta=\inscrit_X^\beta(D)$ must be empty.
\end{proof}

\begin{defin}
We call the minimal ordinal number $\alpha$ such that $\inscrit_X^\alpha(D)=\emptyset$ the \emph{SP-rank} of $D$ with respect to $X$.
\end{defin}

The results we obtain for this sequence mirror the ones obtained for the case of almost Dedekind domains.
\begin{lemma}\label{lemma:csd-continuous}
Let $D$ be a Pr\"ufer domain and let $X\subseteq\Max(D)$ be a discrete splitting set of $D$.  If $f:X\longrightarrow\insZ$ is a continuous function such that $\supp(f)$ is compact and disjoint from $\inscrit(D)$, then $f=\nu_I$ for some $I\in\Inv_X(D)$.
\end{lemma}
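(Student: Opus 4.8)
The plan is to realize $f$ as a $\insZ$-linear combination of characteristic functions of suitable clopen subsets of $X$, each of which can be cut out by a finitely generated radical ideal of $\Inv_X(D)$, and then multiply the corresponding ideals together. First, since $f$ is continuous into the discrete space $\insZ$ and $\supp(f)$ is compact, the image $f(\supp(f))$ is finite; as $f$ vanishes off $\supp(f)$, the image $f(X)$ is finite as well, and for each $n\in f(X)\setminus\{0\}$ the fiber $Y_n:=f^{-1}(n)$ is clopen in $X$ (in the inverse topology), is contained in $\supp(f)$ — hence compact, and disjoint from $\inscrit_X(D)$ by hypothesis — and $f=\sum_n n\chi_{Y_n}$.

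The core step is the claim: if $Y\subseteq X$ is a compact clopen subset (in the inverse topology) disjoint from $\inscrit_X(D)$, then $Y=\V(J)$ for some finitely generated radical ideal $J\in\Inv_X(D)$, so that $\nu_J=\chi_Y$. To prove it, I would first use that $Y$ is open in $X$ to cover it by basic inverse-open sets $\V(L)\cap X$ with $L$ finitely generated; compactness of $Y$ gives a finite subcover, and taking the product of the finitely many $L$'s we get $Y=\V(L)\cap X$ for a single finitely generated ideal $L$. Next, since $Y$ is disjoint from $\inscrit_X(D)$, for each $M\in Y$ there is a radical ideal $K_M\in\Inv_X(D)$ with $K_M\subseteq M$; being a proper ideal with support in $X$, it satisfies $\V(K_M)=\supp(K_M)\subseteq X$. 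The sets $\V(K_M+L)=\V(K_M)\cap\V(L)$, for $M\in Y$, then cover $Y$ (each contains its $M$, because $L\subseteq M$ since $M\in\V(L)$), and each is contained in $\V(L)\cap X=Y$; a finite subcover gives $Y=\V(J_0)$ with $J_0$ a finite product of the ideals $K_{M_j}+L$, hence finitely generated. Then $J_0$ is a proper ideal of $\Inv_X(D)$ with $\V(J_0)=Y$ disjoint from $\inscrit_X(D)$, so Proposition~\ref{prop:caratt-VI-crit} shows that $J:=\rad(J_0)$ is finitely generated; it is radical, lies in $\Inv_X(D)$ since $\supp(J)=\V(J_0)=Y\subseteq X$, and $\V(J)=Y$, whence $\nu_J=\chi_{\V(J)}=\chi_Y$. (If $Y=\emptyset$ one takes $J=D$.)

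Finally, applying this claim to each $Y_n$ produces finitely generated radical ideals $J_n\in\Inv_X(D)$ with $\nu_{J_n}=\chi_{Y_n}$. I would then set $I:=\prod_n J_n^n$, interpreting $J_n^n$ as a power of $J_n$ when $n>0$ and of $J_n^{-1}$ when $n<0$; this is a finite product of elements of $\Inv_X(D)$, hence lies in $\Inv_X(D)$, and by Lemma~\ref{lemma:prod-evaluable} together with $\nu_{J^{-1}}=-\nu_J$ (which follows from $\nu_{JJ^{-1}}=\nu_D=0$) we obtain $\nu_I=\sum_n n\,\nu_{J_n}=\sum_n n\chi_{Y_n}=f$, as desired. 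The main obstacle is the core step: ensuring that the finitely generated ideal cutting out $Y$ can be taken with support inside $X$ and with finitely generated radical — this is exactly where disjointness from $\inscrit_X(D)$ and Proposition~\ref{prop:caratt-VI-crit} are needed, and it relies on the two nested compactness arguments above; the rest is bookkeeping with the additivity of $I\mapsto\nu_I$.
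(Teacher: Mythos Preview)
Your proof is correct and follows essentially the same route as the paper's: decompose $f$ as a finite $\insZ$-combination of characteristic functions of clopen compact pieces disjoint from $\inscrit_X(D)$, realize each piece as $\V(J)$ for a finitely generated radical ideal $J\in\Inv_X(D)$ via Proposition~\ref{prop:caratt-VI-crit}, and take the product. The paper's version is terser in the core step --- it simply asserts that a clopen $X_i\subseteq X\setminus\inscrit_X(D)$ is Zariski-closed and inverse-open, hence equals $\V(J'_i)$ for some finitely generated $J'_i$ (implicitly using that $X$ is inverse-open in $\Spec(D)$ and standard spectral-space facts to get $\V(J'_i)\subseteq X$) --- whereas your two nested compactness arguments make explicit how to land inside $\Inv_X(D)$; this extra care is not a different method, just a more detailed execution of the same one.
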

\begin{proof}
Since $f:X\longrightarrow\insZ$ has compact support, we can write $f=\sum_ia_i\chi_{X_i}$ for some clopen subsets $X_i\subseteq X\setminus\inscrit_X(D)$. Then, $X_i$ is closed in the Zariski topology and open in the inverse topology, hence $X_i=\V(J'_i)$ for some finitely generated ideal $J'_i$. Since $X_i\cap\inscrit_X(D)=\emptyset$, by Proposition \ref{prop:caratt-VI-crit} $J_i:=\rad(J'_i)$ is finitely generated. Then, $\chi_{X_i}=\nu_{J_i}$ and $f$ is the ideal function of $\prod_iJ_i^{a_i}\in\Inv_X(D)$.
\end{proof}

\begin{prop}\label{prop:freekernel}
Let $D$ be a Pr\"ufer domain and let $X\subseteq\Max(D)$ be a discrete splitting set of $D$. Let $\Psi_\gamma:\Inv_{XT_\gamma}(T_\gamma)\longrightarrow\Inv_{XT_{\gamma+1}}(T_{\gamma+1})$ be the extension map. Then,
\begin{equation*}
\ker\Psi_\gamma\simeq\inscont_c(\inscrit_X^\gamma(D)\setminus\inscrit_X^{\gamma+1}(D),\insZ).
\end{equation*}
\end{prop}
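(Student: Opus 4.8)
The plan is to identify $\ker\Psi_\gamma$ with a group $\Inv_W(T_\gamma)$ of invertible ideals of $T_\gamma$ with prescribed support, and then to compute that group through the ideal-function map of Section \ref{sect:critical}. Put $W:=XT_\gamma\setminus\inscrit_{XT_\gamma}(T_\gamma)$. By the definition of $\inscrit_X^{\gamma+1}(D)$ together with Proposition \ref{prop:Talpha}\ref{prop:Talpha:ext2}, $W$ is exactly the set $\{PT_\gamma\mid P\in\inscrit_X^\gamma(D)\setminus\inscrit_X^{\gamma+1}(D)\}$ of extensions to $T_\gamma$ of the primes in $\inscrit_X^\gamma(D)\setminus\inscrit_X^{\gamma+1}(D)$; moreover the contraction $\Spec(T_\gamma)\to\Spec(D)$, being a topological embedding on the surviving primes, restricts to a homeomorphism $W\to\inscrit_X^\gamma(D)\setminus\inscrit_X^{\gamma+1}(D)$ for the inverse topologies. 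Hence it suffices to exhibit an isomorphism $\ker\Psi_\gamma\simeq\inscont_c(W,\insZ)$.

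The key step is to prove that $T_{\gamma+1}$ is the Nagata transform of $T_\gamma$ with respect to $W$ and that $W$ is a discrete splitting set of $T_\gamma$. Starting from $T_\alpha=\Omega(X\setminus\inscrit_X^\alpha(D))$ and the description of which primes survive in a Nagata transform (the proof of Proposition \ref{prop:splitting}), one checks that the localizations $(T_\gamma)_{\mathfrak q}=D_P$ occurring in the intersection defining the Nagata transform of $T_\gamma$ along $W$ are precisely the $D_P$ with $P\in(\Spec(D)\setminus X)\cup\inscrit_X^{\gamma+1}(D)$, that is, this Nagata transform equals $T_{\gamma+1}$; the bookkeeping point, used repeatedly, is that $\inscrit_X^\alpha(D)$ is closed in the inverse topology of the \emph{subspace} $X$, so that $X\cap\cl_\inverse(\inscrit_X^\alpha(D))=\inscrit_X^\alpha(D)$. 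The same computation yields $\mathfrak q T_{\gamma+1}=T_{\gamma+1}$ for every $\mathfrak q\in W$, so $W$ is a splitting set of $T_\gamma$ by Proposition \ref{prop:splitting}, and it is discrete because $X$ is and each $(T_\gamma)_{\mathfrak q}$ is a localization of $D$.

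Granting this, $\Psi_\gamma$ is the restriction to $\Inv_{XT_\gamma}(T_\gamma)$ of the extension map $\Inv(T_\gamma)\to\Inv(T_{\gamma+1})$, whose kernel is $\Inv_W(T_\gamma)$ by Proposition \ref{prop:kerext} applied to $T_\gamma$ and $W$; since $W\subseteq XT_\gamma$, this gives $\ker\Psi_\gamma=\Inv_W(T_\gamma)$. By Lemma \ref{lemma:prop-psi}, the map $\Psi$ restricts on $\Inv_W(T_\gamma)$ to an injective group homomorphism whose image consists of functions supported in $W$; so it remains to show $\{\nu_I\mid I\in\Inv_W(T_\gamma)\}=\inscont_c(W,\insZ)$. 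For $\subseteq$: write $I=JL^{-1}$ with $J,L\subseteq T_\gamma$ ideals such that $\supp(J),\supp(L)\subseteq\supp(I)\subseteq W$ (Lemma \ref{lemma:JL-1}); then $\V(J),\V(L)$ are disjoint from $\inscrit_{XT_\gamma}(T_\gamma)$, so $\nu_J,\nu_L$ and hence $\nu_I=\nu_J-\nu_L$ are continuous on $XT_\gamma$ by Proposition \ref{prop:caratt-VI-crit}, while $\supp(\nu_I)=\supp(I)$ is compact because $I$ is finitely generated; restricting to $W$ gives $\nu_I\in\inscont_c(W,\insZ)$. For $\supseteq$: given $f\in\inscont_c(W,\insZ)$, since $W$ is open in the Hausdorff space $XT_\gamma$ its extension $\tilde f$ by zero is continuous $XT_\gamma\to\insZ$ with compact support inside $W$, hence disjoint from $\inscrit_{XT_\gamma}(T_\gamma)$; by Lemma \ref{lemma:csd-continuous} there is $I\in\Inv_{XT_\gamma}(T_\gamma)$ with $\nu_I=\tilde f$, and then $\supp(I)\subseteq\supp(\tilde f)\subseteq W$, so $I\in\Inv_W(T_\gamma)=\ker\Psi_\gamma$ with $\nu_I|_W=f$. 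Thus $\Psi$ induces $\ker\Psi_\gamma\simeq\inscont_c(W,\insZ)\simeq\inscont_c(\inscrit_X^\gamma(D)\setminus\inscrit_X^{\gamma+1}(D),\insZ)$.

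The main obstacle is the second step: identifying $T_{\gamma+1}$ with the Nagata transform of $T_\gamma$ along $W$ and verifying that $W$ is a splitting set. All the delicate point-set topology — which primes survive in a Nagata transform, the coincidence of two intersections of localizations indexed a priori by different sets, and the distinction between closedness of $\inscrit_X^\alpha(D)$ in $X$ and in $\Spec(D)$ — is concentrated there. Once that identification is in place, the remainder is a routine assembly of Propositions \ref{prop:kerext} and \ref{prop:caratt-VI-crit} and Lemmas \ref{lemma:prop-psi}, \ref{lemma:JL-1} and \ref{lemma:csd-continuous}, exactly as in the almost Dedekind case.
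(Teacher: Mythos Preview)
Your proof is correct and follows essentially the same route as the paper: identify $\ker\Psi_\gamma$ with the invertible ideals of $T_\gamma$ whose support lies in $XT_\gamma\setminus\inscrit_{XT_\gamma}(T_\gamma)$, then compute this group via the ideal-function map using Lemma \ref{lemma:csd-continuous} and Proposition \ref{prop:caratt-VI-crit}. The one noteworthy difference is that what you flag as ``the main obstacle'' --- showing that $T_{\gamma+1}$ is the Nagata transform of $T_\gamma$ along $W$ and that $W$ is a splitting set --- the paper handles in a single sentence, simply observing that by construction $T_{\gamma+1}$ is the first term of the critical sequence built from $T_\gamma$ and $XT_\gamma$; this reduces everything to the case $\gamma=0$ and avoids the explicit bookkeeping you carry out, though of course that bookkeeping is exactly what justifies the observation.
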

\begin{proof}
By definition, $T_{\gamma+1}$ is just the first element of the sequence of overrings if we use $T_\gamma$ instead of $D$ as a starting point (and $XT_\gamma$ as the discrete splitting set). Hence it is enough to prove the claim for $\gamma=0$, i.e., for the extension $\Psi:\Inv(D)\longrightarrow\Inv(T_1)$. 

The kernel of $\Psi$ is the set of invertible ideals $I\in\Inv_X(D)$ such that $ID_M=D_M$ for every $M\in\inscrit_X(D)$, i.e., such that $\supp(I)\cap\inscrit_X(D)=\emptyset$, or $\supp(I)\subseteq X\setminus\inscrit(D)$. By Lemma \ref{lemma:csd-continuous}, the ideal functions of such ideals are exactly the continuous functions $X\setminus\inscrit(D)\longrightarrow\insZ$ with compact support, and thus $\ker\Psi\simeq\inscont_c(X\setminus\inscrit(D),\insZ)$. The claim is proved.
\end{proof}

\begin{cor}\label{cor:invX-sp}
Let $D$ be a Pr\"ufer domain and let $X\subseteq\Max(D)$ be a discrete splitting set of $D$. If $\inscrit_X(D)=\emptyset$, then $\Inv_X(D)\simeq\inscont_c(X,\insZ)$.
\end{cor}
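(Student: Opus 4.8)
The plan is to deduce Corollary \ref{cor:invX-sp} from the machinery already assembled, essentially by reading off what happens to the critical sequence and the extension maps when $\inscrit_X(D)=\emptyset$. First I would observe that if $\inscrit_X(D)=\emptyset$ then, by definition, $\inscrit_X^1(D)=\{P\in X\mid PT_0\in\inscrit_{XT_0}(T_0)\}=\inscrit_X(D)=\emptyset$ (since $T_0=D$), so the SP-rank is $1$ and the critical sequence stabilizes immediately at $\emptyset$. In particular $\inscrit_X^0(D)\setminus\inscrit_X^1(D)=X\setminus\emptyset=X$.

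Next, I would apply Proposition \ref{prop:freekernel} with $\gamma=0$: the extension map $\Psi_0\colon\Inv_X(D)=\Inv_{XT_0}(T_0)\longrightarrow\Inv_{XT_1}(T_1)$ has kernel isomorphic to $\inscont_c(\inscrit_X^0(D)\setminus\inscrit_X^1(D),\insZ)=\inscont_c(X,\insZ)$. So it remains to check that $\Psi_0$ is the zero map, equivalently that $\Inv_{XT_1}(T_1)$ is trivial, equivalently that $\ker\Psi_0=\Inv_X(D)$. This is where I would unwind the definitions: since $\inscrit_X^1(D)=\emptyset$, the overring $T_1=\Omega(X\setminus\inscrit_X^1(D))=\Omega(X)$. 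Now invoke Proposition \ref{prop:kerext}, which gives $\Inv(D)/\Inv_X(D)\simeq\Inv(\Omega(X))$ via the extension map; restricting to $\Inv_X(D)$, the extension map sends every element of $\Inv_X(D)$ to the identity of $\Inv(\Omega(X))$, so $\ker(\Inv_X(D)\to\Inv(\Omega(X)))=\Inv_X(D)$. Since $\Psi_0$ is precisely this restricted extension map (both land in $\Inv(\Omega(X))=\Inv(T_1)$, and $\Inv_{XT_1}(T_1)\subseteq\Inv(T_1)$), we get $\ker\Psi_0=\Inv_X(D)$, and combining with Proposition \ref{prop:freekernel}, $\Inv_X(D)\simeq\inscont_c(X,\insZ)$.

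I do not expect a serious obstacle here; the only point requiring a little care is the bookkeeping identification of the map in Proposition \ref{prop:freekernel} with the extension map of Proposition \ref{prop:kerext} — i.e., making sure that ``$\ker\Psi_0\simeq\inscont_c(X,\insZ)$'' together with ``$\ker\Psi_0=\Inv_X(D)$'' legitimately compose to the stated isomorphism rather than merely asserting $\Inv_X(D)$ is a subgroup of something. Concretely, the proof of Proposition \ref{prop:freekernel} already shows that the ideal functions of elements of $\ker\Psi_0$ realize exactly $\inscont_c(X\setminus\inscrit_X(D),\insZ)$, and the map $I\mapsto\nu_I$ is an injective group homomorphism on $\Inv_X(D)$ by Lemma \ref{lemma:prop-psi}; so once $\ker\Psi_0=\Inv_X(D)$ is established, $\Psi\colon I\mapsto\nu_I$ furnishes the isomorphism directly. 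Thus the whole corollary is a two-line consequence: $\inscrit_X(D)=\emptyset$ forces $\inscrit_X^1(D)=\emptyset$, hence $T_1=\Omega(X)$ and $\ker\Psi_0=\Inv_X(D)$ by Proposition \ref{prop:kerext}, and then Proposition \ref{prop:freekernel} yields $\Inv_X(D)=\ker\Psi_0\simeq\inscont_c(X,\insZ)$.

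\begin{proof}
Since $T_0=D$, the definition of the critical sequence gives $\inscrit_X^1(D)=\{P\in X\mid PT_0\in\inscrit_{XT_0}(T_0)\}=\inscrit_X(D)=\emptyset$. Hence $T_1=\Omega(X\setminus\inscrit_X^1(D))=\Omega(X)$, and by Proposition \ref{prop:kerext} the extension map $\Inv(D)\longrightarrow\Inv(\Omega(X))=\Inv(T_1)$ has kernel $\Inv_X(D)$; in particular, its restriction $\Psi_0\colon\Inv_X(D)\longrightarrow\Inv_{XT_1}(T_1)$ is the zero map, so $\ker\Psi_0=\Inv_X(D)$. On the other hand, by Proposition \ref{prop:freekernel},
\begin{equation*}
\ker\Psi_0\simeq\inscont_c(\inscrit_X^0(D)\setminus\inscrit_X^1(D),\insZ)=\inscont_c(X,\insZ).
\end{equation*}
Combining the two identifications yields $\Inv_X(D)\simeq\inscont_c(X,\insZ)$.
\end{proof}
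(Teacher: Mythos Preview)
Your proof is correct and follows essentially the same approach as the paper: both observe that $\inscrit_X(D)=\emptyset$ forces $\inscrit_X^1(D)=\emptyset$, whence $T_1=\Omega(X)$ and $\inscrit_X^0(D)\setminus\inscrit_X^1(D)=X$, and then apply Proposition \ref{prop:freekernel}. The only cosmetic difference is that the paper notes $XT_1=\emptyset$ (so the codomain of $\Psi_0$ is trivial), whereas you invoke Proposition \ref{prop:kerext} to see $\ker\Psi_0=\Inv_X(D)$; these are equivalent observations.
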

\begin{proof}
If $\inscrit_X(D)=\emptyset$, then $\inscrit^0_X(D)\setminus\inscrit^1_X(D)=X$ and $T_1=\Omega(X)$, so that $XT_1=\emptyset$. The claim follows from Proposition \ref{prop:freekernel}.
\end{proof}

These results set up an inductive reasoning; we recall explicitly a lemma which we will also use later.
\begin{lemma}\label{lemma:unionesgr}
Let $G$ be an abelian group. Let $\{G_\lambda\}_{\lambda\in\Lambda}$ be a well-ordered set of ascending chain of subgroups of $G$ such that:
\begin{itemize}
\item each $G_\lambda$ is free;
\item for all $\lambda$, $G_{\lambda+1}\simeq G_\lambda\oplus H_\lambda$ for some subgroup $H_\lambda$;
\item if $\lambda$ is a limit ordinal, then $G_\lambda=\bigcup_{\alpha<\lambda}G_\alpha$;
\item $G=\bigcup_\lambda G_\lambda$.
\end{itemize}
Then, $\displaystyle{G\simeq G_0\oplus\bigoplus_{\lambda\in\Lambda} H_\lambda}$, and in particular it is free.
\end{lemma}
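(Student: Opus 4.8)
The plan is to build the isomorphism by transfinite recursion, producing along the way an explicit basis of $G$ by amalgamating bases of $G_0$ and of the complements $H_\lambda$. First I would invoke the hypothesis $G_{\lambda+1}\simeq G_\lambda\oplus H_\lambda$ to fix, for each $\lambda$, a concrete complement: that is, a subgroup $H_\lambda'\subseteq G_{\lambda+1}$ with $G_{\lambda+1}=G_\lambda\oplus H_\lambda'$ (so $H_\lambda'\simeq H_\lambda$), and choose a basis $B_\lambda$ of $H_\lambda'$; also fix a basis $B_0$ of $G_0$. The candidate basis of $G$ is then $B:=B_0\cup\bigcup_{\lambda\in\Lambda}B_\lambda$. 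The two things to check are that $B$ generates $G$ and that it is $\insZ$-independent; granting both, $G$ is the internal direct sum of $\langle B_0\rangle=G_0$ and the $\langle B_\lambda\rangle\simeq H_\lambda$, which is exactly the claimed decomposition, and freeness is immediate.

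For generation, I would prove by transfinite induction on $\mu$ that $G_\mu=\langle B_0\cup\bigcup_{\lambda<\mu}B_\lambda\rangle$. The base case $\mu=0$ is the choice of $B_0$. At a successor $\mu=\nu+1$, we have $G_{\nu+1}=G_\nu+\langle B_\nu\rangle$, and by the inductive hypothesis $G_\nu=\langle B_0\cup\bigcup_{\lambda<\nu}B_\lambda\rangle$, so $G_{\nu+1}=\langle B_0\cup\bigcup_{\lambda<\nu+1}B_\lambda\rangle$. At a limit $\mu$, the hypothesis $G_\mu=\bigcup_{\nu<\mu}G_\nu$ together with the inductive hypotheses gives the union of generating sets, which is again $B_0\cup\bigcup_{\lambda<\mu}B_\lambda$. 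Since $G=\bigcup_\lambda G_\lambda$, taking $\mu$ large enough (past every index) yields $G=\langle B\rangle$.

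For independence, suppose some finite $\insZ$-linear combination of elements of $B$ vanishes. Only finitely many indices $\lambda$ occur among the chosen basis elements; let $\mu$ be the largest such index. I claim the coefficients of the elements of $B_\mu$ must all vanish: writing the relation as $b + h = 0$ with $b\in G_\mu$ (the part supported on $B_0\cup\bigcup_{\lambda<\mu}B_\lambda$, which lies in $G_\mu$ by the generation step) and $h\in\langle B_\mu\rangle\subseteq H_\mu'$, the direct sum decomposition $G_{\mu+1}=G_\mu\oplus H_\mu'$ forces $h=0$, hence all $B_\mu$-coefficients are $0$ because $B_\mu$ is a basis of $H_\mu'$. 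Now descend: the remaining relation lives entirely in $B_0\cup\bigcup_{\lambda<\mu}B_\lambda$, and a straightforward transfinite induction (or minimal-counterexample argument on the largest surviving index) kills the coefficients of each $B_\lambda$ in turn and finally those of $B_0$. The one genuinely delicate point is making this descent rigorous across limit stages — but since any relation involves only finitely many basis elements, hence finitely many indices, the descent terminates after finitely many steps and no transfinite subtlety actually arises; this is the step to state carefully rather than the step that is hard.
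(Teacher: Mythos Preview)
Your proof is correct and complete; the passage from the abstract hypothesis $G_{\lambda+1}\simeq G_\lambda\oplus H_\lambda$ to an internal complement $H_\lambda'\subseteq G_{\lambda+1}$ is justified by reading ``subgroup'' as a subgroup of $G_{\lambda+1}$ (the intended meaning, and the one used in the paper's applications), and the freeness of each $H_\lambda'$ follows from $H_\lambda'\subseteq G_{\lambda+1}$ with $G_{\lambda+1}$ free. Your observation that the independence descent is genuinely finite, because any relation involves only finitely many indices, is exactly the right way to handle that step.

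As for comparison: the paper does not give a proof at all --- it simply cites \cite[Chapter 3, Lemma 7.4]{fuchs-abeliangroups} and asserts equivalence. The argument you wrote is essentially the standard proof of that lemma in Fuchs (build a basis of $G$ by amalgamating bases of the successive complements along a smooth chain), so you have supplied what the paper outsources. There is no methodological difference to discuss; you have just unpacked the reference.
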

\begin{proof}
The statement is equivalent to \cite[Chapter 3, Lemma 7.4]{fuchs-abeliangroups}.
\end{proof}

\begin{teor}\label{teor:Invd}
Let $D$ be a Pr\"ufer domain and let $X\subseteq\Max(D)$ be a discrete splitting set of $D$. For every ordinal number $\beta$, let $Y_\beta:=\inscrit_X^\beta(D)\setminus\inscrit_X^{\beta+1}(D)$. Then, the following hold.
\begin{enumerate}[(a)]
\item\label{teor:Invd:seq} For every ordinal $\gamma$, there is an exact sequence
\begin{equation*}
0\longrightarrow\bigoplus_{\beta<\gamma}\inscont_c(Y_\beta,\insZ)\longrightarrow\Inv_X(D)\longrightarrow\Inv_X(T_\gamma)\longrightarrow 0.
\end{equation*}
\item\label{teor:Invd:spr} If $\gamma$ is the SP-rank of $D$ with respect to $X$, then
\begin{equation*}
\Inv_X(D)\simeq\bigoplus_{\beta<\gamma}\inscont_c(Y_\beta,\insZ),
\end{equation*}
\item\label{teor:Invd:free} $\Inv_X(D)$ is free.
\end{enumerate}
\end{teor}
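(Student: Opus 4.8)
The plan is to prove the three items of Theorem~\ref{teor:Invd} together, using the critical sequence $\{T_\alpha\}$ and the exact sequences it produces, and then invoking the transfinite assembly lemma (Lemma~\ref{lemma:unionesgr}). The only genuinely new ingredient beyond what is already set up is bookkeeping across limit ordinals.

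\medskip

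\noindent\textit{Part \ref{teor:Invd:seq} (the exact sequence).} Fix $\gamma$. The map on the right is the iterated extension map $\Inv_X(D)\to\Inv_X(T_\gamma)$; it is surjective because $D$ is a Pr\"ufer domain (extension of finitely generated ideals is surjective onto finitely generated ideals, and the support condition is preserved since $XT_\gamma$ consists exactly of the extensions of primes in $X$ that survive). I would prove exactness and the identification of the kernel by transfinite induction on $\gamma$. For $\gamma=0$ both sides are trivial. For a successor $\gamma=\delta+1$, factor the extension map $\Inv_X(D)\to\Inv_X(T_{\delta+1})$ through $\Inv_X(T_\delta)$. By the inductive hypothesis the kernel of $\Inv_X(D)\to\Inv_X(T_\delta)$ is $\bigoplus_{\beta<\delta}\inscont_c(Y_\beta,\insZ)$, and by Proposition~\ref{prop:freekernel} (applied with $T_\delta$ as base ring, $XT_\delta$ as splitting set) the kernel of $\Inv_X(T_\delta)\to\Inv_X(T_{\delta+1})$ is $\inscont_c(Y_\delta,\insZ)$. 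Pulling the latter back gives a short exact sequence
\begin{equation*}
0\to\bigoplus_{\beta<\delta}\inscont_c(Y_\beta,\insZ)\to\ker\bigl(\Inv_X(D)\to\Inv_X(T_{\delta+1})\bigr)\to\inscont_c(Y_\delta,\insZ)\to 0;
\end{equation*}
since $\inscont_c(Y_\delta,\insZ)$ is free, this splits, so the kernel is $\bigoplus_{\beta<\delta+1}\inscont_c(Y_\beta,\insZ)$, as claimed. For a limit ordinal $\gamma$, one has $\inscrit_X^\gamma(D)=\bigcap_{\delta<\gamma}\inscrit_X^\delta(D)$, hence $T_\gamma=\bigcup_{\delta<\gamma}T_\delta$; an element of $\Inv_X(D)$ dies in $\Inv_X(T_\gamma)$ iff its ideal function vanishes on $\inscrit_X^\gamma(D)$, i.e.\ iff it already dies in some $\Inv_X(T_\delta)$ with $\delta<\gamma$ (here one uses that an invertible ideal is finitely generated, so its support is quasi-compact, together with the fact that $\supp(I)\cap\inscrit_X^\gamma(D)=\emptyset$ forces $\supp(I)\cap\inscrit_X^\delta(D)=\emptyset$ for some $\delta<\gamma$ by compactness of $\supp(I)$ in the inverse topology). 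Therefore $\ker\bigl(\Inv_X(D)\to\Inv_X(T_\gamma)\bigr)=\bigcup_{\delta<\gamma}\ker\bigl(\Inv_X(D)\to\Inv_X(T_\delta)\bigr)=\bigcup_{\delta<\gamma}\bigoplus_{\beta<\delta}\inscont_c(Y_\beta,\insZ)=\bigoplus_{\beta<\gamma}\inscont_c(Y_\beta,\insZ)$.

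\medskip

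\noindent\textit{Parts \ref{teor:Invd:spr} and \ref{teor:Invd:free}.} If $\gamma$ is the SP-rank, then $\inscrit_X^\gamma(D)=\emptyset$ by Theorem~\ref{teor:SP-scat}, so $XT_\gamma=\emptyset$ by Proposition~\ref{prop:Talpha}\ref{prop:Talpha:ext2}, whence $\Inv_X(T_\gamma)=0$ and the exact sequence of part \ref{teor:Invd:seq} degenerates to the isomorphism $\Inv_X(D)\simeq\bigoplus_{\beta<\gamma}\inscont_c(Y_\beta,\insZ)$. This is a direct sum of groups that are free (each $\inscont_c(Y_\beta,\insZ)$ is free, being a subgroup of $\insfunct_b(Y_\beta,\insZ)$ as noted in the preliminaries), hence free; alternatively one applies Lemma~\ref{lemma:unionesgr} with $G_\lambda:=\ker\bigl(\Inv_X(D)\to\Inv_X(T_\lambda)\bigr)$, $H_\lambda:=\inscont_c(Y_\lambda,\insZ)$, the limit-ordinal clause being exactly the union computation above and $G=\Inv_X(D)$ since $G_\gamma=\Inv_X(D)$.

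\medskip

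\noindent The step I expect to require the most care is the limit-ordinal case of part \ref{teor:Invd:seq}: one must argue that the kernel over $T_\gamma$ is the directed union of the kernels over the $T_\delta$, $\delta<\gamma$. This rests on two facts that are available but must be combined correctly — that $T_\gamma$ is the union of the $T_\delta$ (from $\inscrit_X^\gamma(D)=\bigcap\inscrit_X^\delta(D)$), and that the support of an invertible (hence finitely generated) ideal is quasi-compact in the inverse topology, so a descending intersection of closed sets meeting it in $\emptyset$ already has a finite stage missing it. Everything else is a formal splicing of Proposition~\ref{prop:freekernel} with the splitting furnished by freeness of groups of compactly supported continuous $\insZ$-valued functions.
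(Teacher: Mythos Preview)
Your proof is correct and follows essentially the same route as the paper: transfinite induction on $\gamma$, with the successor step handled by splicing Proposition~\ref{prop:freekernel} into the short exact sequence of kernels (which splits by freeness of $\inscont_c(Y_\delta,\insZ)$), and the limit step by showing the kernel at stage $\gamma$ is the union of the earlier kernels. Your compactness argument at the limit stage (using that $\supp(I)$ is compact in the inverse topology and the $\inscrit_X^\delta(D)$ form a descending chain of closed sets) is a legitimate and slightly more explicit justification of what the paper states in one line (``$IT_\gamma=T_\gamma$, and thus there is a $\beta<\gamma$ such that $IT_\beta=T_\beta$'').
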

\begin{proof}
\ref{teor:Invd:seq} We proceed by induction on $\gamma$. If $\gamma=0$ the claim is trivial. If $\gamma=1$, then the extension map $\Inv_X(D)\longrightarrow\Inv_{XT_1}(T_1)$ is surjective and its kernel is isomorphic to $\inscont_c(Y_1,\insZ)$ by Proposition \ref{prop:freekernel}. Thus we have the claimed exact sequence.

Suppose now that the claim holds for every $\lambda<\gamma$. 

Suppose that $\gamma=\alpha+1$ is a successor ordinal. Let $K_\beta$ be the kernel of the extension map $\Inv_X(D)\longrightarrow\Inv_{XT_\beta}(T_\beta)$, and let $K_{\alpha,\gamma}$ the kernel of $\Inv_{XT_\alpha}(T_\alpha)\longrightarrow\Inv_{XT_\gamma}(T_\gamma)$. From the chain of surjective maps $\Inv_X(D)\longrightarrow\Inv_{XT_\gamma}(T_\gamma)\longrightarrow\Inv_{XT_\alpha}(T_\alpha)$, we obtain an exact sequence
\begin{equation*}
0\longrightarrow K_\alpha\longrightarrow K_\gamma\longrightarrow K_{\alpha,\gamma}\longrightarrow 0.
\end{equation*}
By Proposition \ref{prop:freekernel}, $K_{\alpha,\gamma}\simeq\inscont_c(Y_\gamma,\insZ)$ is free; hence, the previous exact sequence splits and $K_\gamma\simeq K_\alpha\oplus\inscont_c(Y_\gamma,\insZ)$. The claim follows by induction.

Suppose that $\gamma$ is a limit ordinal. The sequence $\{K_\beta\}_{\beta<\gamma}$ is an ascending sequence of free groups, and by the previous part of the proof each $K_\beta$ is a direct summand of $K_{\beta+1}$. We claim that $K_\gamma$ is the union of all the $K_\beta$. Indeed, suppose that $I\in K_\gamma$: then, $IT_\gamma=T_\gamma$, and thus there is a $\beta<\gamma$ such that $IT_\beta=T_\beta$. In particular, $I\in K_\beta$. By Lemma \ref{lemma:unionesgr}, it follows that $K_\gamma$ is free as well and has the desired decomposition.

\ref{teor:Invd:spr} If $\gamma$ is the SP-rank of $D$, then $XT_\gamma=\emptyset$, and thus $\Inv_{XT_\gamma}(T_\gamma)$ is the trivial group. The claim follows from the previous part of the proof.

\ref{teor:Invd:free} Each $\inscont_c(Y_\beta,\insZ)$ is free since it is a subgroup of the free group $\insfunct_b(Y_\beta,\insZ)$. Therefore, the direct sum $\Inv_X(D)\simeq\bigoplus_\beta\inscont_c(Y_\beta,\insZ)$ is free as well.
\end{proof}

To conclude this section, we improve Theorem \ref{teor:exist-noncrit} by showing that the set $X\setminus\inscrit_X(D)$ is not only nonempty, but also dense in $X$; we need the following definition.

\begin{defin}
Let $D$ be a Pr\"ufer domain and let $X\subseteq\Max(D)$ be a discrete splitting set of $D$. The \emph{SP-height} of $P$ with respect to $X$ is the ordinal $\alpha$ such that $P\in\inscrit_X^\alpha(D)\setminus\inscrit_X^{\alpha+1}(D)$; equivalently, the SP-height of $P$ is the maximal ordinal number $\alpha$ such that $P\in\inscrit_X^\alpha(D)$.
\end{defin}
Note that this definition is well-posed, since $\bigcup_\alpha X\setminus\inscrit_X^\alpha(D)=\emptyset$, by Theorem \ref{teor:SP-scat}.

\begin{teor}\label{teor:dense}
Let $D$ be a Pr\"ufer domain and let $X\subseteq\Max(D)$ be a discrete splitting set of $D$. Then, the set $X\setminus\inscrit_X(D)$ is dense in $X$.
\end{teor}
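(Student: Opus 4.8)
The plan is to prove density with respect to the inverse topology by reducing to Theorem~\ref{teor:exist-noncrit}, applied not to $X$ itself but to suitable ``slices'' of it. Since the sets $\V(I)\cap X$, with $I$ ranging over the finitely generated ideals of $D$, form a basis of the inverse topology on $X$, it suffices to show: whenever $I$ is a finitely generated ideal with $\V(I)\cap X\neq\emptyset$, there is an $M\in\V(I)\cap X$ that is not critical with respect to $X$.

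The key step is to observe that $Z:=\V(I)\cap X$ is itself a discrete splitting set of $D$ contained in $\Max(D)$. Indeed, $Z$ is closed under specialization, being the intersection of the specialization-closed sets $\V(I)$ and $X=X^\uparrow$; and $\Spec(D)\setminus Z=\D(I)\cup(\Spec(D)\setminus X)$ is compact, because $\D(I)$ is a finite union of basic open sets $\D(f)$ (hence quasi-compact) while $\Spec(D)\setminus X$ is compact by the definition of a splitting set. Thus $Z$ is a splitting set by Proposition~\ref{prop:splitting}; it is contained in $\Max(D)$, and it contains no idempotent prime since $X$ is discrete, so it is a discrete splitting set.

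Now Theorem~\ref{teor:exist-noncrit} applied to $Z$ (which is nonempty) yields $\inscrit_Z(D)\neq Z$: there exist $M\in Z$ and a radical ideal $J\in\Inv_Z(D)$ with $J\subseteq M$. Since $\supp(J)\subseteq Z\subseteq X$ we have $J\in\Inv_X(D)$, so $J$ witnesses that $M\notin\inscrit_X(D)$; as $M\in\V(I)\cap X$, this proves the reduction, hence the theorem.

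I expect the only genuine content to be the verification that $Z=\V(I)\cap X$ is again a splitting set — in particular the quasi-compactness of $\D(I)$ for finitely generated $I$ — together with the trivial monotonicity $\Inv_Z(D)\subseteq\Inv_X(D)$, which is exactly what transfers non-criticality from $Z$ back to $X$. Once these are in place the statement follows at once from Theorem~\ref{teor:exist-noncrit}.
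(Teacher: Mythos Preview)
Your argument is correct and takes a genuinely different route from the paper's. The paper proceeds by transfinite induction on the SP-height of $P\in X$, using the critical sequence $\{\inscrit_X^\alpha(D)\}_\alpha$ and the overrings $T_\alpha$: for $P$ of SP-height $\alpha$ one finds a finitely generated ideal $I$ with $IT_\alpha$ radical and $I\subseteq P$, and then argues that if $P$ lay outside the inverse closure of the non-critical locus one could cut $I$ down (adding a suitable $J$) to a radical ideal of $D$ itself contained in $P$, contradicting criticality. Your proof bypasses the critical sequence entirely: you observe that each basic inverse-open slice $Z=\V(I)\cap X$ is again a discrete splitting set contained in $\Max(D)$ (the only real check being the quasi-compactness of $\D(I)\cup(\Spec(D)\setminus X)$), apply Theorem~\ref{teor:exist-noncrit} to $Z$, and transfer non-criticality back to $X$ via the trivial inclusion $\Inv_Z(D)\subseteq\Inv_X(D)$. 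This is more elementary and makes clear that Theorem~\ref{teor:dense} is already a direct consequence of Theorem~\ref{teor:exist-noncrit}, independent of the transfinite machinery of the critical sequence. The paper's approach, by contrast, exhibits how density is accumulated layer by layer along the SP-rank, which ties the statement more tightly to the structural decomposition leading to Theorem~\ref{teor:Invd}.
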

\begin{proof}
Set $Z:=X\setminus\inscrit_X(D)$, and let $P\in X$: we show that $P\in\cl_\inverse(Z)$ by induction on the SP-height $\alpha$ of $P$ (with respect to $X$).

If the height is $0$, then $P\in Z\subseteq\cl_\inverse(Z)$. Suppose that $\alpha>0$ (so $P$ is critical) and that, for every prime ideal $Q$ of SP-height $\beta<\alpha$, we have $Q\in\cl_\inverse(Z)$; then, $\cl_\inverse(Z)$ contains $Z_\alpha:=X\setminus\inscrit_X^\alpha(D)$. By definition, $PT_\alpha$ is not critical in $T_\alpha$, and thus there is a finitely generated ideal $I$ of $D$ such that $\supp(I)\subseteq X$, $IT_\alpha$ is radical in $T_\alpha$ and $IT_\alpha\subseteq PT_\alpha$ (thus, $I\subseteq P$). If $P\notin\cl_\inverse(Z)$, there is an open subset $\Omega$ of $X$ containing $P$ but disjoint from $\cl_\inverse(Z)$ and thus from $Z_\alpha$; since the $\V(J)$ (as $J$ ranges among the finitely generated ideals of $D$) are a subbasis of $X$, with respect to the inverse topology, we can suppose that $\Omega=\V(J)$. Let $I':=I+J$: then, $I'$ is finitely generated, $\supp(I')\subseteq X$, $I'\subseteq P$, and $\V(I')\cap Z_\alpha\subseteq\V(J)\cap Z_\alpha=\emptyset$. Moreover, if $M\notin Z_\alpha$, then $\nu_{I'}(M)\leq\nu_I(M)=\nu_{IT_\alpha}(M)\leq 1$; hence, $I'$ is a radical ideal. This would imply that $P$ is a non-critical ideal of $D$ with respect to $X$, a contradiction; hence $P$ must belong to $\cl_\inverse(Z)$. By induction, $\cl_\inverse(Z)=X$, i.e., $Z$ is dense in $X$.
\end{proof}

\section{Divisorial ideals (continued)}\label{sect:DivX-2}
We record in this brief section two consequence for the group $\Div_X(D)$ of the analysis done for invertible ideals. Both are generalizations of analogous results for almost Dedekind domains (see \cite[Theorem 6.5]{SP-scattered} and \cite[Proposition 3.9]{bounded-almded}).
\begin{prop}
Let $D$ be a Pr\"ufer domain and let $X\subseteq\Max(D)$ be a discrete splitting set. If $\bcrit_X(D)=\emptyset$, then $\Div_X(D)$ is free.
\end{prop}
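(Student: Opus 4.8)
The plan is to show that the ideal function map $\Psi$ embeds $\Div_X(D)$ as a Specker subgroup of $\insfunct_b(X,\insZ)$, which is free. First I would observe that under the hypothesis $\bcrit_X(D)=\emptyset$, Corollary \ref{cor:SP-bounded} guarantees that every $I\in\Inv_X(D)$ is bounded; combined with the density of $\Inv_X(D)$ in $\Div_X(D)$ (Theorem \ref{teor:DivX}) and the fact that every $I\in\Div_X(D)$ is evaluable on $X$ (Lemma \ref{lemma:nuI-InvX}), I would deduce that $\nu_I$ is bounded for every $I\in\Div_X(D)$ as well: given such an $I$, sandwich it between $L\subseteq I\subseteq D$ with $L\in\Inv_X(D)$ (Lemma \ref{lemma:densita-basso}) and use $\nu_L\leq\nu_I$ together with the bound on $\nu_L$, handling the non-integral part via the factorization $I=(I\cap D)\times_v(I+D)^v$ as in the proof of Lemma \ref{lemma:nuI-InvX}. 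Hence $\Psi$ maps $\Div_X(D)$ into $\insfunct_b(X,\insZ)$, and it is injective by Lemma \ref{lemma:prop-psi}(a).

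The key step is to identify the image $\Psi(\Div_X(D))$ and show it is a Specker subgroup. Since $\Psi$ is not a group homomorphism on $\Div_X(D)$ (Remark \ref{oss:Psi}), I would instead argue that the image is still a \emph{subgroup} of $\insfunct_b(X,\insZ)$: given $I,J\in\Div_X(D)$, the function $\nu_I+\nu_J=\nu_{IJ}$ and $\nu_{IJ}=\nu_{(IJ)^v}$ because passing to the $v$-closure does not change the ideal function (the $v$-closure of a fractional ideal of a Pr\"ufer domain has the same localizations, since $D_P$ is a valuation domain, hence $(IJ)^vD_P=(IJ)D_P$ for all $P$), so $\nu_I+\nu_J=\Psi((IJ)^v)$ lies in the image; similarly $-\nu_I=\nu_{(D:I)}$ is in the image. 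Thus $G:=\Psi(\Div_X(D))$ is a subgroup of $\insfunct_b(X,\insZ)$. To see $G$ is Specker, take $I\in\Div_X(D)$ and $n\in\insZ$; I need $\chi_{\nu_I^{-1}(n)}\in G$. The set $\nu_I^{-1}([n,+\infty))$ and $\nu_I^{-1}([n+1,+\infty))$ are subsets of $\V(I\cap D)$ (for $n\geq 1$) and, by the semicontinuity argument in Proposition \ref{prop:semicont} adapted to divisorial ideals, are clopen in the subspace topology of $\V(I\cap D)$ where Zariski, inverse and constructible topologies agree; so their difference $\nu_I^{-1}(n)$ is clopen in $\V(I\cap D)\subseteq X$, hence $\nu_I^{-1}(n)=\V(A)$ for a radical finitely generated ideal $A$ of $D$ with $\supp(A)\subseteq X$, giving $\chi_{\nu_I^{-1}(n)}=\nu_A=\Psi(A)\in G$. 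The cases $n\leq 0$ reduce to this by replacing $I$ with $(D:I)$ and using $\nu_{(D:I)}=-\nu_I$.

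Once $G$ is shown to be Specker, N\"obeling's theorem (\cite[Satz 2]{nobeling}, quoted in the preliminaries) makes $G$ a direct summand of the free group $\insfunct_b(X,\insZ)$, hence free; and $\Div_X(D)\simeq G$ via $\Psi$, so $\Div_X(D)$ is free. The main obstacle I anticipate is the Specker-subgroup verification: specifically, justifying carefully that the level sets $\nu_I^{-1}(n)$ of a divisorial (not merely invertible) ideal are clopen and of the form $\V(A)$ for a \emph{finitely generated} radical ideal whose support stays inside $X$. The cleanest route is probably to first reduce to the integral case $I\subseteq D$ via the $v$-decomposition, then invoke Lemma \ref{lemma:densita-basso} to trap $I$ between a finitely generated $L\in\Inv_X(D)$ and $D$, and run the argument of Proposition \ref{prop:semicont} with $L$ playing the role of the finitely generated ideal — the boundedness of $\nu_L$ (from $\bcrit_X(D)=\emptyset$) is exactly what ensures the relevant level sets are compact and closed in the Zariski topology, closing the loop.
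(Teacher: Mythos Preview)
Your proposal has a genuine gap at the step where you claim $\nu_{IJ}=\nu_{(IJ)^v}$ for $I,J\in\Div_X(D)$. The justification you give --- that ``the $v$-closure of a fractional ideal of a Pr\"ufer domain has the same localizations, since $D_P$ is a valuation domain'' --- is false: the $v$-operation does not commute with localization in a general Pr\"ufer domain, and $(IJ)^vD_P$ can strictly contain $(IJ)D_P$. In fact, since the assignment $I\mapsto\nu_I$ is injective on ideals with support in $X$ (by the argument of Lemma \ref{lemma:prop-psi}\ref{lemma:prop-psi:inj}, which only uses the localizations), the equality $\nu_{IJ}=\nu_{(IJ)^v}$ would force $IJ=(IJ)^v$, i.e.\ that the product of any two ideals in $\Div_X(D)$ is already divisorial --- and this fails in general. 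This is exactly the content of Remark \ref{oss:Psi}, and the paper reiterates the point immediately after its proof of the proposition: the canonical map $\Div_X(D)\to\insfunct_b(X,\insZ)$ is \emph{not} a group homomorphism, so one cannot use it directly. Consequently your image $G=\Psi(\Div_X(D))$ need not be a subgroup of $\insfunct_b(X,\insZ)$, and the Specker argument never gets off the ground. (The later semicontinuity step for divisorial ideals is also unjustified, but this is secondary.)

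The paper's route avoids the obstruction entirely. It uses $\Psi$ only on $\Inv_X(D)$, where it \emph{is} an injective $\ell$-group homomorphism into $\insfunct_b(X,\insZ)$ (Corollary \ref{cor:SP-bounded} supplies boundedness). Since $\insfunct_b(X,\insZ)$ is a complete $\ell$-group, the universal property of the completion extends this to an embedding of the completion of $\Inv_X(D)$ into $\insfunct_b(X,\insZ)$; by Theorem \ref{teor:DivX} that completion is $\Div_X(D)$. The resulting embedding is a group homomorphism \emph{by construction}, but it is not the naive $I\mapsto\nu_I$. Freeness then follows because any subgroup of $\insfunct_b(X,\insZ)$ is free.
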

\begin{proof}
If $\bcrit_X(D)$ is empty, then every $I\in\Inv_X(D)$ is bounded (Corollary \ref{cor:SP-bounded}), and thus we have an injective group homomorphism $\Psi:\Inv_X(D)\longrightarrow\insbound(X,\insZ)$. The latter is a complete $\ell$-group, and thus the completion of $\Inv_X(D)$ is isomorphic to a subgroup of $\insbound(X,\insZ)$; since the completion of $\Inv_X(D)$ is $\Div_X(D)$ (Theorem \ref{teor:DivX}), the claim is proved.
\end{proof}

Note that we can't use the canonical map $\Div_X(D)\longrightarrow\insbound(X,\insZ)$, since it is not a group homomorphism (see Remark \ref{oss:Psi}).

Recall that, for a topological space $X$, we denote by $E_X$ the Gleason cover of $X$ \cite{strauss-extremallydisconnected}.
\begin{prop}\label{prop:Div-SP}
Let $D$ be a Pr\"ufer domain and let $X\subseteq\Max(D)$ be a discrete splitting set. If $\inscrit_X(D)=\emptyset$, then there is a commutative diagram
\begin{equation*}
\begin{CD}
\Inv_X(D)  @>{\subseteq}>> \Div_X(D)\\
@VVV    @VVV\\
\funcontcomp(X,\insZ) @>>> \funcontcomp(E_X,\insZ)
\end{CD}
\end{equation*}
such that the vertical arrows are isomorphisms. In particular, $\Div_X(D)$ and $\Div_X(D)/\Inv_X(D)$ are free groups.
\end{prop}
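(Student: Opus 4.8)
The plan is to combine Corollary \ref{cor:invX-sp}, which identifies $\Inv_X(D)$ with $\inscont_c(X,\insZ)$ under the hypothesis $\inscrit_X(D)=\emptyset$, with Theorem \ref{teor:DivX}, which says that $\Div_X(D)$ is the $\ell$-group completion of $\Inv_X(D)$. First I would invoke Corollary \ref{cor:invX-sp} to get the left vertical isomorphism $\Psi\colon\Inv_X(D)\xrightarrow{\sim}\funcontcomp(X,\insZ)$. Then, since $\Div_X(D)$ is the completion of $\Inv_X(D)$ as an $\ell$-group (Theorem \ref{teor:DivX}) and completion is functorial in $\ell$-group maps, the isomorphism $\Psi$ extends to an isomorphism of completions $\widehat{\Psi}\colon\Div_X(D)=\widehat{\Inv_X(D)}\xrightarrow{\sim}\widehat{\funcontcomp(X,\insZ)}$. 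The content here is the identification of $\widehat{\funcontcomp(X,\insZ)}$ with $\funcontcomp(E_X,\insZ)$; I would cite the discussion in the Preliminaries (the paragraph on groups of functions, citing \cite[Lemma 5.2 and Theorem 5.3]{HK-Olb-Re} and \cite[Lemma 6.4]{SP-scattered}), noting that this requires $X$ to be Hausdorff and regular, which holds because $X\subseteq\Max(D)$ carries the inverse (= constructible) topology, making it a compact Hausdorff — hence regular — space, so the Gleason cover $E_X$ is defined. That gives the right vertical isomorphism and the commutativity of the square, since $\widehat{\Psi}$ extends $\Psi$ and the bottom horizontal map is by definition the canonical inclusion $\funcontcomp(X,\insZ)\hookrightarrow\funcontcomp(E_X,\insZ)$ induced by the covering map $E_X\to X$.

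For the freeness of $\Div_X(D)$, the point is that $\funcontcomp(E_X,\insZ)$ is free: $E_X$ is extremally disconnected compact Hausdorff, so continuous functions of compact support are in particular bounded, and $\funcontcomp(E_X,\insZ)\subseteq\insfunct_b(E_X,\insZ)$ is a subgroup of a free group, hence free. For the quotient $\Div_X(D)/\Inv_X(D)$, I would transport the square to the bottom row and show that $\funcontcomp(X,\insZ)$ is a Specker subgroup of $\funcontcomp(E_X,\insZ)$: given $f\in\funcontcomp(X,\insZ)$ viewed in $\funcontcomp(E_X,\insZ)$, the level set $f^{-1}(n)$ is clopen in $E_X$ (it is the preimage of the clopen $f^{-1}(n)\subseteq X$), so $\chi_{f^{-1}(n)}$ lies in $\funcontcomp(E_X,\insZ)$, and it lies in $\funcontcomp(X,\insZ)$ as well since $f^{-1}(n)$ is already clopen in $X$ for $n\neq 0$ (and has compact support). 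Hence by \cite[Satz 2]{nobeling} the quotient $\funcontcomp(E_X,\insZ)/\funcontcomp(X,\insZ)\simeq\Div_X(D)/\Inv_X(D)$ is free.

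The main obstacle I anticipate is bookkeeping rather than a deep difficulty: one must be careful that the completion functor genuinely sends the specific isomorphism $\Psi$ to an isomorphism commuting with the two concrete inclusions appearing in the square, rather than merely asserting abstract isomorphism of completions. This is handled by the universal property of the $\ell$-group completion recalled in the Preliminaries — every $\ell$-group map to a complete $\ell$-group factors uniquely through the completion — applied to the inclusion $\Inv_X(D)\hookrightarrow\Div_X(D)\xrightarrow{\sim}\widehat{\funcontcomp(X,\insZ)}$ and to $\Inv_X(D)\xrightarrow{\Psi}\funcontcomp(X,\insZ)\hookrightarrow\funcontcomp(E_X,\insZ)$, whose composites with $\Psi^{-1}$ on $\funcontcomp(X,\insZ)$ agree; uniqueness of the factorization forces the induced maps on completions to coincide. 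A secondary point worth a sentence is that $\funcontcomp(X,\insZ)$ really is dense in $\funcontcomp(E_X,\insZ)$ as an $\ell$-subgroup, which is exactly the content of the cited completion results for groups of continuous functions, so that $\Div_X(D)$ — being the completion of $\Inv_X(D)\simeq\funcontcomp(X,\insZ)$ — is $\funcontcomp(E_X,\insZ)$ and not some intermediate object.
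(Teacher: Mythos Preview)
Your overall strategy matches the paper's proof exactly: invoke Corollary~\ref{cor:invX-sp} for the left isomorphism, then use that $\Div_X(D)$ is the $\ell$-group completion of $\Inv_X(D)$ (Theorem~\ref{teor:DivX}) together with the identification $\widehat{\funcontcomp(X,\insZ)}\simeq\funcontcomp(E_X,\insZ)$ from the Preliminaries. The freeness of $\Div_X(D)$ via $\funcontcomp(E_X,\insZ)\subseteq\insfunct_b(E_X,\insZ)$ is also exactly what the paper does.

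There are two issues, one minor and one genuine. The minor one: your assertion that $X$ is \emph{compact} is unjustified and in general false. A splitting set is by definition open in the inverse topology, and open subsets of $\Spec(D)^{\cons}$ need not be compact (e.g.\ take $X=\Max(D)$ for an almost Dedekind domain with trivial Jacobson radical). This does not damage the Gleason-cover step, since $X$ is still Hausdorff and (completely) regular as a subspace of the compact Hausdorff space $\Spec(D)^{\cons}$; but you should not claim compactness.

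The genuine gap is in your Specker argument for the freeness of the quotient. N\"obeling's \cite[Satz~2]{nobeling} concerns Specker subgroups of $\insfunct_b(E_X,\insZ)$, and the condition must hold for \emph{every} $n\in\insZ$, including $n=0$. If $g\in\funcontcomp(X,\insZ)$ has compact support and $X$ is not compact, then $g^{-1}(0)$ is the complement of a compact set and is itself non-compact, so $\chi_{g^{-1}(0)}\notin\funcontcomp(X,\insZ)$; thus the pulled-back copy of $\funcontcomp(X,\insZ)$ inside $\insfunct_b(E_X,\insZ)$ is \emph{not} Specker. Your argument only handles $n\neq 0$. The paper sidesteps this entirely by citing \cite[Lemma~6.4]{SP-scattered}, which directly gives that $\funcontcomp(E_X,\insZ)/\funcontcomp(X,\insZ)$ is free; you should either do the same or supply a different argument that does not rely on the full Specker condition.
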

\begin{proof}
If $\inscrit_X(D)=\emptyset$, then $\Inv_X(D)\simeq\inscont_c(X,\insZ)$ (Corollary \ref{cor:invX-sp}); thus, the completion of $\Inv_X(D)$ is isomorphic to $\inscont_c(E_X,\insZ)$. Since $\Div_X(D)$ is the completion of $\Inv_X(D)$ (Teorem \ref{teor:DivX}), the diagram is well-defined and commutative.

Hence $\Div_X(D)$ is free (as $\inscont_c(E_X,\insZ)\subseteq\insbound(E_X,\insZ)$), and by \cite[Lemma 6.4]{SP-scattered} also the quotient $\Div_X(D)/\Inv_X(D)\simeq\inscont_c(E_X,\insZ)/\inscont_c(X,\insZ)$ is free.
\end{proof}

We shall give a more topological proof of these results in \cite{residual}.

\section{Compactly layered spaces}
Proposition \ref{prop:kerext} shows that quotienting $\Inv(D)$ by $\Inv_X(D)$ is equivalent to passing from $D$ to the overring $\Omega(X)$; if $X$ is  a splitting set, this means that to study the freeness of $\Inv(D)$ we can ``throw out'' the prime ideals of $X$ and concentrate on $\Spec(D)\setminus X$, just as in the study of $\Inv_X(D)$ we focus only on $X$. For example, if $X=\Max(D)$ is a splitting set, the passage from $D$ to $\Omega(X)$ is equivalent to ``peeling off'' the maximal ideals from the spectrum, and thus it is natural to try to repeat the process, passing from $\Omega(X)$ to an even larger overring.

In order to formalize this reasoning, we introduce the following definition.
\begin{defin}
Let $R$ be a ring. We say that $\Spec(R)$ is \emph{compactly layered} if there is a well-ordered sequence $\{Y_\alpha\}_{\alpha<\lambda}$ such that:
\begin{itemize}
\item $Y_0=\Spec(R)$;
\item $Y_\gamma=\emptyset$ for some $\gamma$;
\item $Y_\alpha\subseteq Y_\beta$ if $\alpha>\beta$;
\item if $\alpha$ is a limit ordinal, $Y_\alpha=\bigcap\{Y_\beta\mid \beta<\alpha\}$;
\item each $Y_\alpha$ is closed in the inverse topology;
\item for every $\alpha$, we have $Y_\alpha\setminus Y_{\alpha+1}\subseteq\Max(Y_\alpha)$.
\end{itemize}
We call $\{Y_\alpha\}_{\alpha<\lambda}$ a \emph{sequence of layers} for $\Spec(R)$.
\end{defin}

\begin{ex}\label{ex:comp-layered}
~\begin{enumerate}
\item If $D$ is a one-dimensional domain, then $\Spec(D)$ is compactly layered, with sequence of layers $Y_0=\Spec(R)$, $Y_1=\{(0)\}$, $Y_2=\emptyset$.
\item If $\Spec(R)$ is finite, then it is compactly layered: indeed, if $n=\dim(R)$, it is enough to take $Y_i$ to be the set of prime ideals of height at most $n-i$.
\end{enumerate}
We shall give in Section \ref{sect:examples} some sufficient conditions for $\Spec(D)$ to be compactly layered.
\end{ex}

\begin{prop}\label{prop:complay}
Let $D$ be a Pr\"ufer domain such that $\Spec(D)$ is compactly layered with sequence of layers $\{Y_\alpha\}_{\alpha<\lambda}$. Let $X:=\Spec(D)\setminus Y_\beta$ for some $\beta$. Then:
\begin{enumerate}[(a)]
\item $X$ is a splitting set for $D$;
\item $\Omega(\Spec(D)\setminus Y_\beta)$ has compactly layered spectrum with sequence of layers $\{Y_\alpha\}_{\beta\leq\alpha<\lambda}$.
\end{enumerate}
\end{prop}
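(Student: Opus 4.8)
The plan is to prove the two claims by working through the defining properties of a compactly layered spectrum and combining them with the splitting-set machinery from Section~3. For part (a), I would argue as follows. Since $\{Y_\alpha\}$ is a sequence of layers, the set $Y_\beta$ is closed in the inverse topology, hence closed under generization; therefore $X=\Spec(D)\setminus Y_\beta$ is closed under specialization, i.e.\ $X=X^\uparrow$. Moreover $Y_\beta$, being inverse-closed, is compact in the Zariski topology (as recalled in the Preliminaries: the inverse-closed sets are exactly the generization-closed Zariski-compact subsets). But the Zariski topology restricted to $Y_\beta$ and the complement $X$ interact so that $\Spec(D)\setminus X=Y_\beta$ is compact; by Proposition~\ref{prop:splitting}, condition \ref{prop:splitting:compact}, this is exactly the statement that $X$ is a splitting set (the hypothesis $X=X^\uparrow$ having just been checked). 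So part (a) is essentially a translation exercise once one notices that ``inverse-closed'' is precisely what is needed.

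For part (b), write $R:=\Omega(X)=\Omega(\Spec(D)\setminus Y_\beta)$. The claim is that $\{Y_\alpha\}_{\beta\le\alpha<\lambda}$, reinterpreted inside $\Spec(R)$, is a sequence of layers. First I would recall that, because $D$ is a Pr\"ufer domain (hence vacant, and its Zariski space consists of localizations), the prime spectrum of the overring $R=\Omega(X)$ is naturally identified with $\cl_\inverse(\Spec(D)\setminus X^\uparrow)=\cl_\inverse(Y_\beta)=Y_\beta$ — this is the same fact about surviving primes used in the proof of Proposition~\ref{prop:splitting}, together with the observation that $Y_\beta$ is already inverse-closed so its inverse closure is itself. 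Under this identification, $\Spec(R)=Y_\beta$, the first layer of the tail sequence, so the first bullet of the definition holds; the terminal bullet ($Y_\gamma=\emptyset$) and the descending and limit-intersection bullets are inherited verbatim from the original sequence restricted to indices $\ge\beta$.

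The two points requiring genuine care are: (i) that each $Y_\alpha$ for $\alpha\ge\beta$ remains closed in the \emph{inverse topology of} $\Spec(R)$, and (ii) that $Y_\alpha\setminus Y_{\alpha+1}\subseteq\Max(Y_\alpha)$ still holds when $\Max$ is computed inside $R$ rather than $D$. For (i), since $\Spec(R)=Y_\beta$ sits inside $\Spec(D)$ as an inverse-closed subset, and the identification is a homeomorphism for the inverse (equivalently constructible, on these spectral subspaces) topologies, the inverse-closed subsets of $\Spec(R)$ are exactly the inverse-closed subsets of $\Spec(D)$ contained in $Y_\beta$; each $Y_\alpha$ with $\alpha\ge\beta$ is such a set, being inverse-closed in $\Spec(D)$ and contained in $Y_\beta$. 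For (ii), the key point is that a prime $P\in Y_\alpha\setminus Y_{\alpha+1}$ is maximal \emph{among primes of $D$ lying in $Y_\alpha$}, and since $Y_\alpha\subseteq Y_\beta$ is exactly the set of primes surviving in $R$, maximality within $Y_\alpha$ is the same as maximality within $\Spec(R)$ intersected with $Y_\alpha$; one must check there is no prime of $R$ outside $Y_\alpha$ lying above the extension of $P$, which follows because the primes of $R$ are in order-preserving bijection with the primes of $D$ in $Y_\beta$. I expect step (ii) — matching up the notion of maximality in the layer before and after passing to the Nagata transform — to be the main obstacle, since it is where the order-theoretic structure of $\Spec(D)$ interacts most delicately with the overring construction; the rest is a careful but routine unwinding of definitions using Proposition~\ref{prop:splitting} and the description of surviving primes.
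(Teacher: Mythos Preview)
Your proposal is correct and follows the same approach as the paper: part (a) is exactly the observation that $Y_\beta$ is inverse-closed (hence $X$ is inverse-open and $X=X^\uparrow$), and part (b) is the identification of $\Spec(\Omega(X))$ with $Y_\beta$ via the restriction map, after which the layering conditions transfer. The paper's proof is a two-sentence version of this (it omits even the check that $X=X^\uparrow$ and simply says ``all the conditions of the statement hold'' once the homeomorphism is noted), so your version is a faithful and more detailed unpacking; your worry in (ii) about primes of $R$ outside $Y_\alpha$ lying above $P$ is unnecessary, since $\Max(Y_\alpha)$ refers only to maximality \emph{within} $Y_\alpha$, which is preserved automatically by the order-preserving bijection.
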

\begin{proof}
The first statement is equivalent to saying that $Y_\beta$ is closed in the inverse topology, which it is by definition of sequence of layers. The spectrum of $\Omega(\Spec(D)\setminus Y_\beta)$ is homeomorphic to $Y_\beta$ (by the restriction map $P\mapsto P\cap D$), and all the conditions of the statement hold.
\end{proof}

\begin{lemma}\label{lemma:unionOmega}
Let $D$ be a Pr\"ufer domain such that $\Spec(D)$ is compactly layered with sequence of layers $\{Y_\alpha\}_{\alpha<\lambda}$, and let $Z_\alpha:=\Spec(D)\setminus Y_\alpha$. If $\gamma$ is a limit ordinal, then $\Omega(Z_\gamma)=\bigcup\{\Omega(Z_\alpha)\mid \alpha<\gamma\}$.
\end{lemma}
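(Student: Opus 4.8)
The plan is to unwind the definition of the Nagata transform, reduce the statement to an equality of intersections of localizations, and then treat the two inclusions separately. First I would record that, since each $Y_\alpha$ is closed in the inverse topology, it is closed under generization, so $Z_\alpha=\Spec(D)\setminus Y_\alpha$ is closed under specialization, i.e.\ $Z_\alpha^\uparrow=Z_\alpha$ (equivalently, one may invoke Proposition~\ref{prop:complay}, which says each $Z_\alpha$ is a splitting set). Hence $\Omega(Z_\alpha)=\bigcap\{D_P\mid P\in Y_\alpha\}$, and likewise $\Omega(Z_\gamma)=\bigcap\{D_P\mid P\in Y_\gamma\}$; moreover $Y_\gamma=\bigcap_{\alpha<\gamma}Y_\alpha$ because $\gamma$ is a limit ordinal. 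Since $\{Y_\alpha\}$ is descending, $\{\Omega(Z_\alpha)\}$ is ascending, and the inclusion $\Omega(Z_\gamma)\supseteq\bigcup_{\alpha<\gamma}\Omega(Z_\alpha)$ is immediate: $Y_\gamma\subseteq Y_\alpha$ gives $\bigcap_{P\in Y_\gamma}D_P\supseteq\bigcap_{P\in Y_\alpha}D_P$ for every $\alpha<\gamma$.

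For the reverse inclusion I would argue by compactness. Given a nonzero $x\in\Omega(Z_\gamma)$, set $I:=(D:_D x)=\{d\in D\mid dx\in D\}$, a nonzero ideal of $D$ (it contains the denominator of $x$). A short check shows $x\in D_P$ if and only if $I\not\subseteq P$, so $\{P\in\Spec(D)\mid x\notin D_P\}=\V(I)$. The hypothesis $x\in\Omega(Z_\gamma)$ thus reads $\V(I)\cap Y_\gamma=\emptyset$, that is, $\bigcap_{\alpha<\gamma}\bigl(\V(I)\cap Y_\alpha\bigr)=\emptyset$. Now $\V(I)$ is Zariski-closed and each $Y_\alpha$ is closed in the inverse topology, so each $\V(I)\cap Y_\alpha$ is closed in the constructible topology; since $\Spec(D)$ is compact in the constructible topology and $\{\V(I)\cap Y_\alpha\}_{\alpha<\gamma}$ is a descending chain of closed sets with empty intersection, the finite intersection property must fail, and because the chain is totally ordered by inclusion this forces $\V(I)\cap Y_{\alpha_0}=\emptyset$ for some $\alpha_0<\gamma$. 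Then $x\in D_P$ for all $P\in Y_{\alpha_0}$, i.e.\ $x\in\Omega(Z_{\alpha_0})\subseteq\bigcup_{\alpha<\gamma}\Omega(Z_\alpha)$, as desired.

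The step I expect to be the crux is this compactness argument: the idea is to pass from the Zariski and the inverse topologies, in which $\V(I)$ and the $Y_\alpha$ respectively are closed, to the constructible topology, where $\Spec(D)$ is compact and both families become closed simultaneously; once that is in place, the finite-intersection-property argument on a descending chain is routine. Everything else---the identification $\{P\mid x\notin D_P\}=\V(I)$ and the rewriting of $\Omega(Z_\alpha)$ as $\bigcap_{P\in Y_\alpha}D_P$---is bookkeeping with the definitions.
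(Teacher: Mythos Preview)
Your proof is correct and follows essentially the same route as the paper: both reduce to the conductor ideal $I=(D:_Dx)$ and apply a compactness/finite-intersection argument to the descending chain $\{\V(I)\cap Y_\alpha\}_{\alpha<\gamma}$. The only difference is cosmetic---the paper notes that $I$ is finitely generated (since $D$ is Pr\"ufer) and uses the compactness of $\V(I)$ in the inverse topology, whereas you pass to the constructible topology on all of $\Spec(D)$, which is slightly cleaner and does not require $I$ to be finitely generated.
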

\begin{proof}
Clearly the union is contained in $\Omega(Z_\gamma)$. Suppose $x\in\Omega(Z_\gamma)$, and let $I:=(D:_Dx)$ be its conductor. Then, $I$ is a finitely generated ideal of $D$, and $I\Omega(Z_\alpha)=\Omega(Z_\alpha)$ if and only if $x\in Z_\alpha$, if and only if $\V(I)\cap Y_\alpha=\emptyset$.

The space $\V(I)$ is an open and compact subset of $\Spec(D)$, with respect to the inverse topology. Moreover, $I\Omega(Z_\gamma)=\Omega(Z_\gamma)$, and thus $\{Z_\alpha\cap \V(I)\}_{\alpha<\gamma}$ is a family of closed subsets of $\Spec(D)$ whose intersection is empty. By the finite intersection property, $Y_\beta\cap\V(I)=\emptyset$ for some $\beta<\gamma$; then, $I\Omega(Z_\beta)=\Omega(Z_\beta)$ and $x\in\Omega(Z_\beta)$. The claim is proved.
\end{proof}

\begin{defin}
We say that a domain $D$ is a \emph{CSD-domain} if $D$ is a strongly discrete Pr\"ufer domain with compactly layered spectrum.
\end{defin}

\begin{teor}\label{teor:CSD-free}
Let $D$ be a CSD-domain. Then, $\Inv(D)$ is free.
\end{teor}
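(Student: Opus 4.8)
The plan is to peel the layers of $\Spec(D)$ off one at a time by passing to Nagata transforms, using Proposition~\ref{prop:kerext} to translate each step into a statement about $\Inv(D)$ and Theorem~\ref{teor:Invd} to see that each summand that gets split off is free. The argument runs essentially parallel to the proof of Theorem~\ref{teor:Invd}.

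Fix a sequence of layers $\{Y_\alpha\}_{\alpha<\lambda}$ for $\Spec(D)$, let $\gamma$ be the ordinal with $Y_\gamma=\emptyset$, put $Z_\alpha:=\Spec(D)\setminus Y_\alpha$, and set $R_\alpha:=\Omega(Z_\alpha)$. Then $R_0=D$, the $R_\alpha$ form an ascending chain of Pr\"ufer overrings of $D$ with $\Spec(R_\alpha)\cong Y_\alpha$ via $Q\mapsto Q\cap D$ (Proposition~\ref{prop:complay}), and $R_\gamma=\Omega(\Spec(D))=K$, so $\Inv(R_\gamma)=0$; moreover each $R_\alpha$ inherits strong discreteness from $D$ since $(R_\alpha)_Q=D_{Q\cap D}$. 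The first point to check is that $X_\alpha:=Y_\alpha\setminus Y_{\alpha+1}$ is a discrete splitting set of $R_\alpha$ contained in $\Max(R_\alpha)$: by definition of a sequence of layers $X_\alpha\subseteq\Max(Y_\alpha)=\Max(R_\alpha)$, hence $X_\alpha=X_\alpha^\uparrow$ inside $\Spec(R_\alpha)$ and $X_\alpha$ is discrete because $R_\alpha$ is strongly discrete; and $\Spec(R_\alpha)\setminus X_\alpha=Y_{\alpha+1}$ is Zariski-compact, being closed in the inverse topology, so Proposition~\ref{prop:splitting} applies. The second point is to identify the Nagata transform of $R_\alpha$ with respect to $X_\alpha$, namely
\begin{equation*}
\bigcap\{(R_\alpha)_Q\mid Q\in Y_{\alpha+1}\}=\bigcap\{D_P\mid P\in Y_{\alpha+1}\}=\Omega(Z_{\alpha+1})=R_{\alpha+1}.
\end{equation*}
Feeding this into Proposition~\ref{prop:kerext}, applied with $R_\alpha$ in place of $D$ and $X_\alpha$ in place of $X$, gives for every $\alpha$ a short exact sequence of extension maps
\begin{equation*}
0\longrightarrow\Inv_{X_\alpha}(R_\alpha)\longrightarrow\Inv(R_\alpha)\longrightarrow\Inv(R_{\alpha+1})\longrightarrow 0,
\end{equation*}
in which $\Inv_{X_\alpha}(R_\alpha)$ is free by Theorem~\ref{teor:Invd}(c).

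To descend from these relative statements to $\Inv(D)$ itself, I would set $G_\alpha:=\ker\bigl(\Inv(D)\to\Inv(R_\alpha)\bigr)$. Since the extension maps are surjective and compose correctly, $\{G_\alpha\}_{\alpha\le\gamma}$ is an ascending chain with $G_0=0$ and $G_\gamma=\Inv(D)$, and the isomorphism $\Inv(D)/G_\alpha\simeq\Inv(R_\alpha)$ carries $G_{\alpha+1}/G_\alpha$ onto $\ker(\Inv(R_\alpha)\to\Inv(R_{\alpha+1}))=\Inv_{X_\alpha}(R_\alpha)$. For a limit ordinal $\alpha$ one has $G_\alpha=\bigcup_{\beta<\alpha}G_\beta$: this is where Lemma~\ref{lemma:unionOmega} enters, giving $R_\alpha=\bigcup_{\beta<\alpha}R_\beta$, so that a finitely generated ideal that becomes the unit ideal in $R_\alpha$ already does so in some $R_\beta$. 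A transfinite induction on $\alpha$ then shows each $G_\alpha$ is free: at a successor the sequence $0\to G_\beta\to G_{\beta+1}\to\Inv_{X_\beta}(R_\beta)\to 0$ splits since its right-hand term is free, so $G_{\beta+1}\simeq G_\beta\oplus\Inv_{X_\beta}(R_\beta)$; at a limit, Lemma~\ref{lemma:unionesgr} applied to $\{G_\beta\}_{\beta<\alpha}$ does the job. Taking $\alpha=\gamma$ yields that $\Inv(D)=G_\gamma$ is free; in fact the same bookkeeping gives $\Inv(D)\simeq\bigoplus_{\alpha<\gamma}\Inv_{X_\alpha}(R_\alpha)$, in parallel with Theorem~\ref{teor:Invd}(b).

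I expect the genuinely delicate part to be the first step: checking that $X_\alpha$ really is a splitting set of $R_\alpha$ and that the Nagata transform of $R_\alpha$ along $X_\alpha$ is again one of the rings of the chain. Both rest on the fact that $R_\alpha$ is a flat overring whose spectrum is $Y_\alpha$ compatibly with the Zariski and inverse topologies, so that the layer structure of $\Spec(D)$ restricts to a layer structure on $\Spec(R_\alpha)$ whose bottom layer is exactly $X_\alpha$. Once this compatibility is pinned down, the remainder is the same union-of-subgroups argument (Lemma~\ref{lemma:unionesgr}) used for Theorem~\ref{teor:Invd}.
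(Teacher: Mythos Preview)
Your proposal is correct and follows essentially the same route as the paper: set $R_\alpha=\Omega(Z_\alpha)$, use Proposition~\ref{prop:kerext} (together with Proposition~\ref{prop:complay}) to obtain the short exact sequences $0\to\Inv_{X_\alpha}(R_\alpha)\to\Inv(R_\alpha)\to\Inv(R_{\alpha+1})\to 0$, invoke Theorem~\ref{teor:Invd} for freeness of the kernels, and run the transfinite induction via Lemma~\ref{lemma:unionOmega} and Lemma~\ref{lemma:unionesgr}. The paper is somewhat terser in the verifications you flag as ``delicate'' (it defers them to Proposition~\ref{prop:complay}), but the structure of the argument and the final decomposition $\Inv(D)\simeq\bigoplus_\alpha\Inv_{X_\alpha}(R_\alpha)$ are the same.
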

\begin{proof}
Let $\{Y_\alpha\}_{\alpha<\lambda}$ be a sequence of layers of $\Spec(D)$ and, for every $\alpha$, let $X_\alpha:=Y_\alpha\setminus Y_{\alpha+1}$ and $Z_\alpha:=\Spec(D)\setminus Y_\alpha$. We claim that, for every $\gamma>0$, there is an exact sequence
\begin{equation}\label{eq:succ-invXalpha}
0\longrightarrow\bigoplus_{\alpha<\gamma}\Inv_{X_\alpha}(\Omega(Z_\alpha))\longrightarrow\Inv(D)\longrightarrow\Inv(\Omega(Z_\gamma))\longrightarrow 0.
\end{equation}
We proceed by induction on $\gamma$; the proof is similar to the proof of Theorem \ref{teor:Invd}.

If $\gamma=1$, then $Y_0=\Spec(D)$ and $\Omega(Z_0)=D$, so that the existence of the sequence \eqref{eq:succ-invXalpha} reduces to Proposition \ref{prop:kerext}.

Suppose that the claim is true for ordinal numbers strictly smaller than $\gamma$. We distinguish two cases.

Suppose first that $\gamma=\alpha+1$ is a successor ordinal, and let $D':=\Omega(Z_\alpha)$. Then, $D'$ has compactly layered spectrum with sequence of layers $\{Y_\beta\}_{\gamma\leq\beta<\lambda}$, by Proposition \ref{prop:complay}. Consider the three extension maps $\Psi_\alpha:\Inv(D)\longrightarrow\Inv(D')$, $\Psi_\gamma:\Inv(D)\longrightarrow\Inv(\Omega(Z_\gamma))$ and $\Psi':\Inv(D')\longrightarrow\Inv(\Omega(Z_\gamma))$; let their kernels be, respectively, $K_\alpha$, $K_\gamma$ and $K_{\alpha,\gamma}$. Then, $\Psi_\gamma=\Psi'\circ\Psi_\alpha$, and thus there is an exact sequence
\begin{equation*}
0\longrightarrow K_\alpha\longrightarrow K_\gamma\longrightarrow K_{\alpha,\gamma}\longrightarrow 0.
\end{equation*}
The map $\Psi'$ is just the map involved in the base case if we start from $D'$ instead of $D$; hence, $K_{\alpha,\gamma}\simeq\Inv_{X_\alpha}(D')$, which is free by Theorem \ref{teor:Invd}. Therefore, the previous sequence splits and $K_\gamma\simeq K_\alpha\oplus \Inv_{X_\alpha}(D')$; the claim now follows by induction.

Suppose now that $\gamma$ is a limit ordinal, and let as above $K_\alpha$ be the kernel of $\Psi_\alpha:\Inv(D)\longrightarrow\Inv(\Omega(Z_\alpha))$. The sequence $\{K_\alpha\}_{\alpha<\gamma}$ is an ascending sequence of free groups, and by the previous part of the proof each $K_\alpha$ is a direct summand of $K_{\alpha+1}$. We claim that $K_\gamma$ is the union of all $K_\alpha$ for $\alpha<\gamma$. Indeed, if $I\in K_\gamma$, then $I\Omega(Z_\gamma)=\Omega(Z_\gamma)$. Since $\gamma$ is a limit ordinal, $\Omega(Z_\gamma)=\bigcup_{\alpha<\gamma}\Omega(Z_\alpha)$ (Lemma \ref{lemma:unionOmega}) and $I\Omega(Z_\beta)=\Omega(Z_\beta)$ for some $\beta<\gamma$, i.e., $I\in K_\beta$ for some $\beta$. By Lemma \ref{lemma:unionesgr}, $K_\gamma$ has the desired decomposition.

\bigskip

Since $\Spec(D)$ is compactly layered, there is a $\beta$ such that $Z_\beta=\emptyset$, i.e., $Z_\beta=\Spec(D)$ and $\Omega(Z_\beta)=K$. Thus, the exact sequence \eqref{eq:succ-invXalpha} becomes
\begin{equation*}
0\longrightarrow\bigoplus_{\alpha<\beta}\Inv_{X_\alpha}(\Omega(Z_\alpha))\longrightarrow\Inv(D)\longrightarrow\Inv(K)\longrightarrow 0.
\end{equation*}
Since $\Inv(K)$ is trivial, we have that $\displaystyle{\Inv(D)\simeq\bigoplus_{\alpha<\beta}\Inv_{X_\alpha}(\Omega(Z_\alpha))}$; in particular, $\Inv(D)$ is a free group.
\end{proof}

\section{Examples and applications}\label{sect:examples}
In this section, we collect some sufficient conditions and some explicit examples of Pr\"ufer domains with compactly layered spectrum. A rather wide condition is the following.
\begin{prop}\label{prop:class-complay}
Let $\mathcal{C}$ be a class of Pr\"ufer domains such that:
\begin{itemize}
\item $\mathcal{C}$ is closed by overrings (i.e., if $D\in\mathcal{C}$ and $T\in\Over(D)$ then $T\in\mathcal{C}$);
\item for every $D\in\mathcal{C}$, the interior of $\Max(D)$ (with respect to the inverse topology) is nonempty.
\end{itemize}
Then, $\Spec(D)$ is compactly layered for every $D\in\mathcal{C}$.
\end{prop}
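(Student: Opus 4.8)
The plan is to build a sequence of layers $\{Y_\alpha\}_{\alpha<\lambda}$ for $\Spec(D)$ by transfinite recursion, peeling off at each stage a nonempty inverse-open set of maximal ideals of the appropriate overring; the hypothesis on $\mathcal{C}$ is exactly what guarantees such a set is available at every stage. First I would set $Y_0:=\Spec(D)$. Assuming $Y_\alpha$ is already defined and closed in the inverse topology, I stop and put $\lambda:=\alpha+1$ if $Y_\alpha=\emptyset$; otherwise I pass to the overring $T_\alpha:=\Omega(\Spec(D)\setminus Y_\alpha)$. Since $Y_\alpha$ is closed in the inverse topology, the restriction map $Q\mapsto Q\cap D$ identifies $\Spec(T_\alpha)$ with $Y_\alpha$ homeomorphically (for the inverse topology), exactly as in the proof of Proposition \ref{prop:complay}, and under this identification $\Max(T_\alpha)$ corresponds to the set of maximal elements of $Y_\alpha$. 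As $\mathcal{C}$ is closed under overrings we have $T_\alpha\in\mathcal{C}$, and $T_\alpha$ is a nonzero ring because $Y_\alpha\neq\emptyset$; hence, by hypothesis, the interior $U_\alpha$ of $\Max(T_\alpha)$ in the inverse topology of $\Spec(T_\alpha)$ is nonempty. Viewing $U_\alpha$ as a subset of $Y_\alpha$, it is nonempty, open in the subspace inverse topology, and contained in the set of maximal elements of $Y_\alpha$; I then set $Y_{\alpha+1}:=Y_\alpha\setminus U_\alpha$. At a limit ordinal $\alpha$ I set $Y_\alpha:=\bigcap_{\beta<\alpha}Y_\beta$.

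Then I would verify the axioms of a sequence of layers. Descent and the limit clause are built into the recursion. Each $Y_\alpha$ is closed in the inverse topology, by induction: at a successor stage $Y_{\alpha+1}=Y_\alpha\setminus U_\alpha$ is closed in $\Spec(D)$ because $U_\alpha$ is open in $Y_\alpha$ while $Y_\alpha$ is closed in $\Spec(D)$; at a limit stage an intersection of closed sets is closed. The requirement $Y_\alpha\setminus Y_{\alpha+1}\subseteq\Max(Y_\alpha)$ holds because $Y_\alpha\setminus Y_{\alpha+1}=U_\alpha$ lies in $\Max(T_\alpha)$, which is identified with the set of maximal elements of $Y_\alpha$.

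Finally I would argue that the recursion terminates. Whenever $Y_\alpha\neq\emptyset$ we have $U_\alpha\neq\emptyset$, so $Y_{\alpha+1}\subsetneq Y_\alpha$; picking $x_\alpha\in U_\alpha\subseteq Y_\alpha\setminus Y_{\alpha+1}$ for each such $\alpha$ and using that $x_\beta\in Y_\beta\subseteq Y_{\alpha+1}$ while $x_\alpha\notin Y_{\alpha+1}$ whenever $\alpha<\beta$, the assignment $\alpha\mapsto x_\alpha$ is injective into $\Spec(D)$; since $\Spec(D)$ is a set, the recursion cannot proceed through all ordinals, so $Y_\gamma=\emptyset$ for some $\gamma$ and $\{Y_\alpha\}_{\alpha\leq\gamma}$ is a sequence of layers. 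As every member of $\mathcal{C}$ is a Pr\"ufer domain to which the argument applies, $\Spec(D)$ is compactly layered for all $D\in\mathcal{C}$.

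The only genuinely non-routine point is the identification of $\Spec(T_\alpha)$ with $Y_\alpha$ as topological spaces for the inverse topology, so that ``the interior of $\Max(T_\alpha)$'' really transports to an inverse-open subset of $Y_\alpha$ consisting of maximal elements; this is where the Pr\"ufer hypothesis and the theory of Nagata transforms are needed, and it is already established in the proof of Proposition \ref{prop:complay}, so I would simply invoke it. Everything else is the standard bookkeeping for a strictly descending transfinite chain of inverse-closed subsets of $\Spec(D)$.
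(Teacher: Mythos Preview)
Your proposal is correct and follows essentially the same approach as the paper: both build the layers by transfinite recursion, removing at each successor step the (nonempty) inverse-interior of the maximal ideals of the overring corresponding to the current layer, and both terminate by a cardinality argument. Your write-up is somewhat more explicit than the paper's in verifying inductively that each $Y_\alpha$ is inverse-closed and in spelling out the termination; the only minor quibble is that the homeomorphism $\Spec(T_\alpha)\cong Y_\alpha$ you invoke is stated in the \emph{proof} of Proposition~\ref{prop:complay} rather than in its statement, so it would be cleaner to cite the underlying fact directly (e.g.\ Proposition~\ref{prop:splitting} together with the standard description of $\Spec$ of an intersection of localizations in a Pr\"ufer domain).
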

\begin{proof}
We define the layers $Y_\alpha$ recursively for every ordinal $\alpha$. We first set $Y_0:=\Spec(D)$. Suppose that $Y_\beta$ is defined for every $\beta<\alpha$.

If $\alpha$ is a limit ordinal we set $Y_\alpha:=\bigcap_{\beta<\alpha}Y_\beta$.

If $\alpha=\gamma+1$ is a successor ordinal, then $Y_\gamma$ is homeomorphic to $\Spec(T)$ for some $T\in\Over(D)$. By hypothesis, the interior $Z$ of $\Max(T)$ is nonempty; thus, $Y_\gamma\setminus Z$ is a closed set of $\Spec(D)$, with respect to the inverse topology, that is properly contained in $Y_\gamma$, and $Y_\gamma\setminus Y_\alpha\subseteq\Max(Y_\gamma)$.

Thus we only need to show that the sequence $\{Y_\alpha\}$ reaches the empty set. For cardinality reasons, there must be a $\beta$ such that $Y_\beta=Y_{\beta+1}$; however, by construction, if $Y_\beta\neq\emptyset$ then $Y_{\beta+1}\subsetneq Y_\beta$, a contradiction. Hence $\Spec(D)$ is compactly layered.
\end{proof}

\begin{cor}\label{cor:noeth-complay}
Let $D$ be an Pr\"ufer domain such that $\Spec(D)$ is a Noetherian space. Then $\Spec(D)$ is compactly layered.
\end{cor}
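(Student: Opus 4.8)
The plan is to obtain the statement as an immediate application of Proposition \ref{prop:class-complay}, taking $\mathcal{C}$ to be the class of all Pr\"ufer domains whose spectrum is a Noetherian space. It is then enough to check the two hypotheses of that proposition for this class: that it is closed under passing to overrings, and that for every $D\in\mathcal{C}$ the interior of $\Max(D)$ with respect to the inverse topology is nonempty.

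For the first point, I would recall that every overring $T$ of a Pr\"ufer domain $D$ is again a Pr\"ufer domain and that the contraction map $\Spec(T)\longrightarrow\Spec(D)$, $Q\mapsto Q\cap D$, is a homeomorphism onto its image; this is the same fact already used in the proofs of Proposition \ref{prop:complay} and Proposition \ref{prop:class-complay}. Since any subspace of a Noetherian topological space is again Noetherian (an ascending chain of open subsets of the subspace is the trace of an ascending chain of open subsets of the ambient space, obtained by taking partial unions, which stabilizes), it follows that $\Spec(T)$ is Noetherian, so $T\in\mathcal{C}$.

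For the second point, I would use the description of the inverse topology recalled in the Preliminaries: its closed sets are exactly the subsets that are closed under generization and quasi-compact in the Zariski topology. In a Noetherian space every subset is quasi-compact, hence the inverse-closed subsets of $\Spec(D)$ are precisely the generically closed ones, and dually the inverse-open subsets are precisely the specialization-closed ones. Since $\Max(D)$ is trivially closed under specialization, it is itself open in the inverse topology, so its interior equals $\Max(D)$, which is nonempty. Applying Proposition \ref{prop:class-complay} then shows that $\Spec(D)$ is compactly layered.

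The argument is short, and the only point requiring some care -- the main ``obstacle'' -- is to keep in mind that ``Noetherian space'' refers to the topology of $\Spec(D)$ and not to $D$ being a Noetherian ring, and to invoke the correct fact that for overrings of Pr\"ufer domains the contraction map is a topological embedding (so that Noetherianity of the spectrum descends to the overring) rather than merely a continuous injection.
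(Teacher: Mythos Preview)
Your proposal is correct and follows essentially the same route as the paper: both apply Proposition~\ref{prop:class-complay} to the class of Pr\"ufer domains with Noetherian spectrum, check closure under overrings via the fact that $\Spec(T)$ embeds homeomorphically into $\Spec(D)$, and observe that $\Max(D)$ is itself open in the inverse topology (you phrase this as ``specialization-closed implies inverse-open when every subset is compact'', the paper phrases it dually as ``$\Spec(D)\setminus\Max(D)$ is compact and closed under generization, hence inverse-closed'').
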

\begin{proof}
If $\Spec(D)$ is Noetherian, then $\Spec(T)$ is Noetherian for every $T\in\Over(D)$, since $\Spec(T)$ is homeomorphic to a subset of $\Spec(D)$. Furthermore, $\Max(D)$ is open in the inverse topology since $\Spec(D)\setminus\Max(D)$ is compact and closed by generizations. By Proposition \ref{prop:class-complay}, every Pr\"ufer domain $D$ with these properties has compactly layered spectrum.
\end{proof}

Recall that a subset $X\subseteq\Spec(D)$ is \emph{locally finite} if every $x\in D$ is contained in only finitely many $P\in X$, or equivalently if every $x\in K$ becomes a unit in all but finitely many $D_P$ with $P\in X$. Compare the following corollary with \cite[Proposition 6.15]{inv-free}. 
\begin{cor}
Let $D$ be a Pr\"ufer domain such that $\Spec(D)\setminus\Max(D)$ is locally finite. Then $\Spec(D)$ is compactly layered.
\end{cor}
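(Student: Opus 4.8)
The goal is to deduce the corollary from Proposition \ref{prop:class-complay}, so the plan is to exhibit a class $\mathcal{C}$ of Pr\"ufer domains containing $D$ that is closed under overrings and in which every member has $\Max$ with nonempty interior (in the inverse topology). The natural candidate is the class of Pr\"ufer domains $D'$ for which $\Spec(D')\setminus\Max(D')$ is locally finite; I would first check that this property passes to overrings. Since an overring $T$ of a Pr\"ufer domain $D$ has $\Spec(T)$ homeomorphic (via contraction) to a subspace of $\Spec(D)$, and the contraction of $\Max(T)$ lands among the primes of $D$ (not necessarily maximal, but certainly the non-maximal primes of $T$ contract to non-maximal primes of $D$), the set $\Spec(T)\setminus\Max(T)$ corresponds to a subset of $\Spec(D)\setminus\Max(D)$, and local finiteness is inherited by subsets. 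So closure under overrings is routine.

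The substantive step is to show that local finiteness of $\Spec(D)\setminus\Max(D)$ forces $\Max(D)$ to have nonempty interior in the inverse topology — equivalently, that $\Spec(D)\setminus\Max(D)$ is \emph{not} dense in the inverse topology, i.e.\ it is not the case that every nonempty $\V(I)$ with $I$ finitely generated meets $\Spec(D)\setminus\Max(D)$. Here I would argue by contradiction: if $\Spec(D)\setminus\Max(D)$ were inverse-dense, then in particular $\Spec(D)\setminus\Max(D)$ would be an infinite set whose inverse closure is all of $\Spec(D)$; but a locally finite subset of $\Spec(D)$ is closed and discrete in the constructible topology, hence (being a subset of the compact constructible space) it would have to be finite if its inverse closure were everything — or more directly, one shows a locally finite set cannot be inverse-dense in an infinite spectrum. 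Concretely: pick any $x \in \mm$ for $\mm$ a maximal ideal with $D_{\mm}$ not a field; then $\V(xD)$ is a nonempty clopen set (inverse topology) meeting only finitely many primes of $\Spec(D)\setminus\Max(D)$ by local finiteness, say $Q_1,\dots,Q_k$; choosing $y_i \in \mm \setminus Q_i$ for each $i$ (possible since $\mm \not\subseteq Q_i$, as $Q_i \subsetneq \mm'$ for some maximal $\mm'$... need $\mm \not\subseteq Q_i$, which holds as long as we pick $\mm$ appropriately, e.g.\ $\mm \in \V(xD)$ itself), the ideal $J := (x,y_1,\dots,y_k)$ is finitely generated with $\V(J) \subseteq \V(xD)$ a nonempty subset of $\Max(D)$, so $\Max(D)$ contains a nonempty inverse-open set.

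The one delicate point I expect to be the main obstacle is handling degenerate cases and the exact separation argument in the last step: one must ensure $\V(J)$ is nonempty after removing the finitely many non-maximal primes (which requires that the non-maximal primes in $\V(xD)$ don't exhaust it, guaranteed since $\mm \in \V(xD)$ is maximal), and one must be careful about whether $D$ could be a field or one-dimensional, though in those cases the conclusion is trivial or follows from Example \ref{ex:comp-layered}. Also I should make sure the argument that $\V(J) \subseteq \Max(D)$ is correct: a prime $P \in \V(J)$ contains $x$ so $P \in \V(xD)$, and $P$ avoids being one of the $Q_i$ by construction (since $y_i \notin Q_i$ but $y_i \in J \subseteq P$), and the $Q_i$ were \emph{all} the non-maximal primes in $\V(xD)$, so $P$ is maximal. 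Once this is in place, Proposition \ref{prop:class-complay} applies to the class $\mathcal{C} = \{D' \text{ Pr\"ufer} : \Spec(D')\setminus\Max(D') \text{ locally finite}\}$ and yields that $\Spec(D)$ is compactly layered.

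\begin{proof}
Let $\mathcal{C}$ be the class of Pr\"ufer domains $D'$ such that $\Spec(D')\setminus\Max(D')$ is locally finite; by hypothesis $D\in\mathcal{C}$. We verify the two conditions of Proposition \ref{prop:class-complay}.

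First, $\mathcal{C}$ is closed by overrings. If $D'\in\mathcal{C}$ and $T\in\Over(D')$, the contraction map $P\mapsto P\cap D'$ is a homeomorphism of $\Spec(T)$ onto a subspace of $\Spec(D')$, and it carries $\Spec(T)\setminus\Max(T)$ into a subset of $\Spec(D')\setminus\Max(D')$ (a non-maximal prime of $T$ cannot contract to a maximal ideal of $D'$, since $\Spec(T)$ is, via contraction, a subset of $\Spec(D')$ closed under the relevant specializations). As local finiteness of a subset of $\Spec(D')$ is inherited by every smaller subset, $\Spec(T)\setminus\Max(T)$ is locally finite, i.e.\ $T\in\mathcal{C}$.

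Second, for every $D'\in\mathcal{C}$ the interior of $\Max(D')$ in the inverse topology is nonempty. If $\dim(D')\leq 1$ this is clear, since then $\Spec(D')\setminus\Max(D')\subseteq\{(0)\}$, which is closed under generization, and $\Max(D')$ is its complement, hence inverse-open (and nonempty). Assume $\dim(D')\geq 2$ and pick a maximal ideal $\mm$ with a nonzero prime $Q_0\subsetneq\mm$; choose $x\in\mm$, $x\neq 0$. The set $\V(xD')$ is open in the inverse topology (as $xD'$ is finitely generated), nonempty (it contains $\mm$), and by local finiteness it contains only finitely many non-maximal primes, say $Q_1,\dots,Q_k$. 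For each $i$ we have $\mm\not\subseteq Q_i$ (as $Q_i$ is properly contained in some maximal ideal, and distinct maximal ideals are incomparable), so we may pick $y_i\in\mm\setminus Q_i$. Set $J:=(x,y_1,\dots,y_k)$, a finitely generated ideal contained in $\mm$, so that $\mm\in\V(J)$ and $\V(J)$ is nonempty. If $P\in\V(J)$, then $x\in P$, so $P\in\V(xD')$; moreover $P\neq Q_i$ for each $i$, since $y_i\in J\subseteq P$ but $y_i\notin Q_i$. As $Q_1,\dots,Q_k$ are all the non-maximal primes of $\V(xD')$, it follows that $P\in\Max(D')$. Hence $\V(J)$ is a nonempty inverse-open subset of $\Max(D')$, so $\Max(D')$ has nonempty interior.

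By Proposition \ref{prop:class-complay}, $\Spec(D')$ is compactly layered for every $D'\in\mathcal{C}$; in particular $\Spec(D)$ is compactly layered.
\end{proof}
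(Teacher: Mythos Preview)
Your proof is correct and follows essentially the same strategy as the paper's: both apply Proposition~\ref{prop:class-complay} to the class $\mathcal{C}$ of Pr\"ufer domains with $\Spec\setminus\Max$ locally finite, verify closure under overrings via the contraction map, and show $\Max(D')$ has nonempty inverse interior by starting from a principal $\V(xD')$ and killing the finitely many non-maximal primes above $x$ with extra generators. The only cosmetic differences are that the paper uses prime avoidance to find a single element avoiding all the $Q_i$ at once (rather than one $y_i$ per $Q_i$) and does not separate out the low-dimensional case explicitly.
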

\begin{proof}
Let $\mathcal{C}$ be the class of Pr\"ufer domains satisfying the hypothesis of the statement. If $D\in\mathcal{C}$ and $T\in\Over(D)$, then every $P\in\Spec(T)$ that is not maximal restricts to a prime ideal $P\cap D$ of $D$ that is not maximal; hence $\{T_Q\mid Q\in\Spec(T)\setminus\Max(T)\}\subseteq\{D_P\mid P\in\Spec(T)\setminus\Max(T)\}$, and thus $\mathcal{C}$ is closed by overrings. Let now $I$ be a nonzero finitely generated ideal of $D$: then, $I$ is contained in only finitely many non-maximal ideals, say $P_1,\ldots,P_n$. Let $M$ be a maximal ideal containing $I$: by prime avoidance, there is an $x\in M\setminus(P_1\cup\cdots\cup P_n)$, and thus $J:=(I,x)$ is a finitely generated ideal such that $\V(I)\subseteq\Max(D)$. Hence, the interior of $\Max(D)$ contains $\V(J)$ and so is nonempty. By Proposition \ref{prop:class-complay}, every $D\in\mathcal{C}$ has compactly layered spectrum.
\end{proof}

Before stating a rather general result about CSD-domain, we premise a lemma that is of independent interest.
\begin{lemma}\label{lemma:minimal-jump}
Let $D$ be an integral domain, and let $I$ be a proper finitely generated ideal such that $\Min(I)$ is compact and $\V(I)\nsubseteq\Max(D)$. Let $M\in\V(I)\setminus\Min(I)$. Then, there is a finitely generated ideal $J$ such that $I\subseteq J\subseteq M$ and $\Min(I)\cap\Min(J)=\emptyset$.
\end{lemma}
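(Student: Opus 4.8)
The plan is to reduce the condition $\Min(I)\cap\Min(J)=\emptyset$ to the simpler requirement that $J$ be contained in no minimal prime of $I$, and then to produce such a $J$ by adjoining to $I$ finitely many elements of $M$, using the compactness of $\Min(I)$ to keep the list finite. First I would record the reduction: if $J$ is a finitely generated ideal with $I\subseteq J\subseteq M$ and $J\nsubseteq P$ for every $P\in\Min(I)$, then every $Q\in\Min(J)$ satisfies $J\subseteq Q$, so $Q\notin\Min(I)$; hence $\Min(I)\cap\Min(J)=\emptyset$. Thus it is enough to build such a $J$.

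The key step is the observation that $M$ itself lies in no minimal prime of $I$: if $M\subseteq P$ for some $P\in\Min(I)$, then $I\subseteq M\subseteq P$, and minimality of $P$ in $\V(I)$ forces $M=P$, contradicting $M\notin\Min(I)$. (This is the only place where the precise hypothesis $M\in\V(I)\setminus\Min(I)$ is used; the hypothesis $\V(I)\nsubseteq\Max(D)$ is in fact equivalent to $\V(I)\setminus\Min(I)\neq\emptyset$, so it serves only to guarantee that such an $M$ exists at all.) Consequently, for each $P\in\Min(I)$ one may choose an element $x_P\in M\setminus P$, so that $P\in\D(x_P)$. The open sets $\D(x_P)$, as $P$ ranges over $\Min(I)$, cover the compact space $\Min(I)$, and hence finitely many of them, say $\D(x_{P_1}),\dots,\D(x_{P_n})$, already cover $\Min(I)$.

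Finally I would set $J:=I+(x_{P_1},\dots,x_{P_n})$: it is finitely generated since $I$ is, it is contained in $M$ since $I\subseteq M$ and each $x_{P_i}\in M$, and for every $P\in\Min(I)$ there is an index $i$ with $x_{P_i}\notin P$, so $J\nsubseteq P$. By the reduction of the first step this yields $\Min(I)\cap\Min(J)=\emptyset$, as required. I do not expect a genuine obstacle in this argument: it is essentially prime avoidance together with a compactness argument, and the only point requiring a moment's care is the minimality argument showing that the non-minimal prime $M$ cannot sit inside a minimal prime of $I$ --- which is exactly what allows the witnesses $x_P$ to be chosen within $M$.
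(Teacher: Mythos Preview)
Your proof is correct and follows essentially the same approach as the paper: choose witnesses $x_P\in M\setminus P$ for each $P\in\Min(I)$, use compactness of $\Min(I)$ to extract finitely many, and set $J=I+(x_{P_1},\ldots,x_{P_n})$. You are simply more explicit than the paper in spelling out the reduction to ``$J\nsubseteq P$ for all $P\in\Min(I)$'' and in justifying why $M\not\subseteq P$ for $P\in\Min(I)$.
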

\begin{proof}
Since $M\notin\Min(I)$, for each $P\in\Min(I)$ we can find an element $x_P\in M\setminus P$; thus, $\{\D(x_P)\mid P\in\Min(I)\}$ is an open cover of $\Min(I)$. Since $\Min(I)$ is compact, it has a finite subcover $\D(x_1),\ldots,\D(x_n)$. Let $J:=I+(x_1,\ldots,x_n)D$: then, $J$ is finitely generated and, by construction, $J\subseteq M$. Finally, if $P\in\Min(I)$ then $J\nsubseteq I$ since $P\in\D(x_i)$ for some $i$ and $x_i\in J$. Thus the claim is proved.
\end{proof}

\begin{prop}
Let $D$ be a strongly discrete Pr\"ufer domain such that $\Min(I)$ is compact (with respect to the Zariski topology) for every finitely generated ideal $I$. Then, $D$ is a CSD-domain.
\end{prop}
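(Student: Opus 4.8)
The plan is to apply Proposition~\ref{prop:class-complay} to the class $\mathcal{C}$ consisting of all strongly discrete Pr\"ufer domains $T$ such that $\Min(I)$ is compact, in the Zariski topology, for every finitely generated ideal $I$ of $T$. Since $D\in\mathcal{C}$ by hypothesis, it is enough to check that $\mathcal{C}$ is closed under overrings and that the interior of $\Max(T)$ in the inverse topology is nonempty for every $T\in\mathcal{C}$: then Proposition~\ref{prop:class-complay} yields that $\Spec(D)$ is compactly layered, and hence that $D$ is a CSD-domain.

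\emph{Closure under overrings.} Let $T$ be an overring of $D\in\mathcal{C}$. Then $T$ is a Pr\"ufer domain, and it is strongly discrete because $T_Q=D_{Q\cap D}$ for every $Q\in\Spec(T)$, so that the maximal ideal $QT_Q=(Q\cap D)D_{Q\cap D}$ of $T_Q$ is principal. The contraction map $\iota\colon\Spec(T)\to\Spec(D)$ is a homeomorphism onto its image; this image equals $\{P\in\Spec(D)\mid T\subseteq D_P\}$, which is closed under generization and, being homeomorphic to $\Spec(T)$, compact in the Zariski topology, hence closed in the inverse topology and therefore pro-constructible. Now let $I$ be a finitely generated ideal of $T$, and let $I_0$ be the finitely generated fractional ideal of $D$ generated by a finite set of generators of $I$, so that $I_0T=I$; by Lemma~\ref{lemma:JL-1} there is a finitely generated ideal $\mathfrak{a}\subseteq D$ with $\supp_D(I_0)=\V(\mathfrak{a})$. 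Since $T_Q=D_{Q\cap D}$, one has $Q\in\V_T(I)$ if and only if $Q\cap D\in\V(\mathfrak{a})$, so $\iota$ identifies $\V_T(I)$ with $\V(\mathfrak{a})\cap\iota(\Spec(T))$; as $\iota$ is an order embedding and its image is closed under generization, it identifies $\Min(I)$ with $\Min(\mathfrak{a})\cap\iota(\Spec(T))$. Finally, $\Min(\mathfrak{a})$ is closed under generization and, by hypothesis on $D$, compact; hence it is pro-constructible, and its intersection with the pro-constructible set $\iota(\Spec(T))$ is pro-constructible, hence compact. Thus $\Min(I)$ is compact and $T\in\mathcal{C}$.

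\emph{Nonempty interior of $\Max$.} Let $T\in\mathcal{C}$; we may assume $T$ is not a field, and it suffices to produce a finitely generated ideal $J$ with $\emptyset\neq\V(J)\subseteq\Max(T)$. The key remark is that if $I$ is a finitely generated proper ideal with $\V(I)=\Min(I)$, then $\V(I)\subseteq\Max(T)$: each $P\in\V(I)$ lies under some maximal ideal $M\supseteq I$, and $M\in\V(I)=\Min(I)$ forces $M=P$. So, starting from any nonzero finitely generated proper ideal $I_0$, as long as $\V(I_k)\neq\Min(I_k)$ we choose $M_k\in\V(I_k)\setminus\Min(I_k)$ and apply Lemma~\ref{lemma:minimal-jump} (whose hypothesis holds, since $\Min(I_k)$ is compact) to obtain a finitely generated $I_{k+1}$ with $I_k\subseteq I_{k+1}\subseteq M_k$ and $\Min(I_k)\cap\Min(I_{k+1})=\emptyset$. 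This process must stop after finitely many steps: otherwise, since $\{\V(I_k)\}_k$ is a descending chain of nonempty Zariski-closed subsets of the Zariski-compact space $\Spec(T)$, there is a prime $P$ with $P\supseteq I_k$ for every $k$; the primes of $T$ contained in $P$ form a chain (localize at $P$), so there is a unique minimal prime $P_k$ of $I_k$ with $P_k\subseteq P$, and applying the same remark to $P_{k+1}$ shows that the unique minimal prime of $I_k$ contained in $P_{k+1}$ is again $P_k$, whence $P_k\subseteq P_{k+1}$, while $P_k\neq P_{k+1}$ because $\Min(I_k)\cap\Min(I_{k+1})=\emptyset$; thus $P_0\subsetneq P_1\subsetneq\cdots$ is an infinite strictly ascending chain of prime ideals, contradicting the fact that in a strongly discrete Pr\"ufer domain every prime is branched, so that every ascending chain of primes stabilizes. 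Hence the process stops at some $I_k$ with $\V(I_k)=\Min(I_k)\subseteq\Max(T)$, and we take $J:=I_k$.

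The step I expect to require the most care is the closure of $\mathcal{C}$ under overrings, where the compactness of the sets of minimal primes has to be transferred from $D$ to $T$: this rests on identifying $\Spec(T)$ with a pro-constructible subspace of $\Spec(D)$ and on verifying that finitely generated ideals of $T$, their minimal primes, and the operation $I_0\mapsto\V(\mathfrak{a})$ of Lemma~\ref{lemma:JL-1} all behave compatibly under this identification.
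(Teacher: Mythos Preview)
Your proof follows essentially the same strategy as the paper's --- apply Proposition~\ref{prop:class-complay} to the class $\mathcal{C}$, verify closure under overrings, and verify that the interior of $\Max$ is nonempty via an ascending-chain argument ending in a contradiction with strong discreteness. The second part (nonempty interior) is correct and matches the paper's argument almost exactly; the paper makes the final contradiction explicit by forming $P=\bigcup_n P_n$ and observing $P=P^2$, which is precisely the content of your appeal to ``every prime is branched, so ascending chains of primes stabilize.''

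There is, however, one genuine slip in your closure-under-overrings argument. You assert that $\Min(\mathfrak{a})$ is closed under generization, and use this together with compactness to conclude it is pro-constructible. But $\Min(\mathfrak{a})$ is essentially never closed under generization: for any nonzero $\mathfrak{a}$ and any $P\in\Min(\mathfrak{a})$ one has $(0)\subsetneq P$ with $(0)\notin\Min(\mathfrak{a})$. Fortunately the \emph{conclusion} you need --- that $\Min(\mathfrak{a})$ is pro-constructible --- is still correct: write
\[
\Min(\mathfrak{a})=\Min(\mathfrak{a})^\downarrow\cap\V(\mathfrak{a}),
\]
note that $\Min(\mathfrak{a})^\downarrow$ is generization-closed and Zariski-compact (a basic-open cover of $\Min(\mathfrak{a})^\downarrow$ restricts to a cover of the compact set $\Min(\mathfrak{a})$, and each $\D(f)$ is itself generization-closed), hence inverse-closed, hence pro-constructible; and $\V(\mathfrak{a})$ is Zariski-closed, hence pro-constructible. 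Their intersection is then pro-constructible, and the rest of your argument goes through. This is exactly how the paper handles the point: it passes to $\Min(J)^\downarrow$ rather than working with $\Min(J)$ directly. With this correction your proof is complete and aligned with the paper's.
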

\begin{proof}
Let $\mathcal{C}$ be the class of Pr\"ufer domains satisfying the hypothesis. Then, $\mathcal{C}$ is closed by overrings: indeed, if $D\in\mathcal{C}$, $T\in\Over(D)$, and $I$ is a finitely generated ideal of $T$, then $I=JT$ for some finitely generated ideal $J$ of $D$. Let $\iota:\Spec(T)\longrightarrow\Spec(D)$ be the restriction map. Then, $\iota(\Min(I))=\Min(J)\cap\iota(\Spec(T))$, and
\begin{equation*}
\iota(\Min(I)^\downarrow)=\iota(\Min(I))^\downarrow=\Min(J)^\downarrow\cap\iota(\Spec(T)),
\end{equation*}
In particular, both $\Min(J)^\downarrow$ and $\iota(\Spec(T))$ are closed in the inverse topology (the former since $\Min(J)$ is compact), which means that also $\iota(\Min(I))^\downarrow$ is closed, and so $\iota(\Min(I))$ is compact. Since $\iota(\Min(I))$ is homeomorphic to $\Min(I)$, also $\Min(I)$ is compact, and $T\in\mathcal{C}$.

Let now $D\in\mathcal{C}$, and suppose that the interior of $\Max(D)$, with respect to the inverse topology, is empty, i.e., there is no finitely generated ideal whose minimal primes are all maximal. Let $I_0$ be a finitely generated ideal of $D$. By hypothesis, $\V(I_0)\nsubseteq\Max(D)$, and thus by Lemma \ref{lemma:minimal-jump} there is a proper finitely generated ideal $I_1\supseteq I_0$ such that $\Min(I_0)\cap\Min(I_1)=\emptyset$. Applying repeatedly the same reasoning, we can find a sequence $I_0\subsetneq I_1\subsetneq\cdots\subsetneq I_n\subsetneq\cdots$ of finitely generated ideals of $D$ such that $I_{n+1}$ is not contained in any minimal prime of $I_n$. Then, $\{\V(I_n)\}_{n\inN}$ is a decreasing sequence of closed subsets of $\Spec(D)$ (with respect to the Zariski topology) and thus, by the finite intersection property, there is an $M\in\bigcap_n\V(I_n)$. In particular, $I_n\subseteq M$ for every $n$, and thus we can find a (unique) minimal prime $P_n$ of $I_n$ contained in $M$. By construction, $P_n\subsetneq P_{n+1}$ for every $n$; thus $\{P_n\}_{n\inN}$ is a strictly increasing sequence of prime ideals. It follows that $P:=\bigcup_nP_n$ is a prime ideal that is equal to the union of the prime ideals strictly contained in itself; however, $P^2$ too contains each $P_n$, and thus $P=P^2$, against the fact that $D$ is strongly discrete.. Therefore, the sequence of the ideals $I_n$ cannot exists, and thus the interior of $\Max(D)$ is nonempty.

By Proposition \ref{prop:class-complay}, every $D\in\mathcal{C}$ is a CSD-domain.
\end{proof}

Another way to construct compactly layered spectra is by extensions.
\begin{prop}\label{prop:fiber0}
Let $D\subseteq R$ be integral domains, and let $\phi:\Spec(R)\longrightarrow\Spec(D)$, $P\mapsto P\cap D$ be the canonical restriction map. If $\Spec(D)$ is compactly layered and the fibers of $\phi$ are $0$-dimensional, then $\Spec(R)$ is compactly layered.
\end{prop}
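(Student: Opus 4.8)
The plan is to pull back the sequence of layers along $\phi$. Fix a sequence of layers $\{Y_\alpha\}_{\alpha<\lambda}$ for $\Spec(D)$ and set $\widetilde{Y}_\alpha:=\phi^{-1}(Y_\alpha)$ for each $\alpha$; I claim $\{\widetilde{Y}_\alpha\}_{\alpha<\lambda}$ is a sequence of layers for $\Spec(R)$. The first four conditions are formal. We have $\widetilde{Y}_0=\phi^{-1}(\Spec(D))=\Spec(R)$, and $\widetilde{Y}_\gamma=\phi^{-1}(\emptyset)=\emptyset$ for the $\gamma$ with $Y_\gamma=\emptyset$; the chain $\{\widetilde{Y}_\alpha\}$ is descending because $\phi^{-1}$ is monotone; and $\phi^{-1}$ commutes with arbitrary intersections, so the limit-ordinal clause $\widetilde{Y}_\alpha=\bigcap_{\beta<\alpha}\widetilde{Y}_\beta$ is inherited from the corresponding clause for the $Y_\beta$.

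For the fifth condition, the key point is that $\phi$ is continuous when $\Spec(R)$ and $\Spec(D)$ both carry the inverse topology: if $I$ is a finitely generated ideal of $D$, then $\phi^{-1}(\V(I))=\V(IR)$ and $IR$ is again finitely generated, so $\phi$ pulls the subbasic closed sets of the inverse topology of $\Spec(D)$ back to closed sets of the inverse topology of $\Spec(R)$, hence is continuous for these topologies. Since each $Y_\alpha$ is inverse-closed by hypothesis, so is $\widetilde{Y}_\alpha=\phi^{-1}(Y_\alpha)$.

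The crux is the last condition, $\widetilde{Y}_\alpha\setminus\widetilde{Y}_{\alpha+1}\subseteq\Max(\widetilde{Y}_\alpha)$, and this is where the hypotheses do real work. Since $Y_{\alpha+1}\subseteq Y_\alpha$, we have $\widetilde{Y}_\alpha\setminus\widetilde{Y}_{\alpha+1}=\phi^{-1}(Y_\alpha\setminus Y_{\alpha+1})$, so take $P\in\Spec(R)$ with $M:=P\cap D\in Y_\alpha\setminus Y_{\alpha+1}\subseteq\Max(Y_\alpha)$, and suppose $P'\in\widetilde{Y}_\alpha$ satisfies $P\subseteq P'$. Then $M=P\cap D\subseteq P'\cap D$, and $P'\cap D\in Y_\alpha$, so maximality of $M$ in $Y_\alpha$ forces $P\cap D=P'\cap D=M$; thus $P$ and $P'$ are comparable primes lying in the single fiber $\phi^{-1}(M)$, which is $0$-dimensional, whence $P=P'$. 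Hence $P$ is maximal in $\widetilde{Y}_\alpha$, and the verification is complete. Everything apart from this step is bookkeeping; the two genuine inputs are used precisely here, and each guards against one way maximality could fail: maximality of $M$ in $Y_\alpha$ (the layer structure on $\Spec(D)$) excludes a dominating prime $P'$ with $P'\cap D\supsetneq M$, and $0$-dimensionality of the fiber $\phi^{-1}(M)$ excludes a dominating prime lying over the same $M$. I do not expect any serious obstacle beyond keeping this dichotomy straight.
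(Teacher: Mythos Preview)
Your proof is correct and follows essentially the same approach as the paper: pull back the sequence of layers along $\phi$, check the formal conditions, use continuity of $\phi$ for the inverse topology to get closedness, and combine maximality in $Y_\alpha$ with the zero-dimensional-fiber hypothesis to verify the last condition. One small terminological slip: in the paper's convention the sets $\V(I)$ for finitely generated $I$ form a subbasis of \emph{open} sets for the inverse topology, not closed sets, but since $\phi^{-1}(\V(I))=\V(IR)$ with $IR$ finitely generated, your continuity argument goes through unchanged.
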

\begin{proof}
Let $\{Y_\alpha\}_\alpha$ be a sequence of layers for $\Spec(D)$, and let $Z_\alpha:=\phi^{-1}(Y_\alpha)$ for every $\alpha$. We claim that $\{Z_\alpha\}_\alpha$ is a sequence of layers for $\Spec(R)$.

Indeed, $Z_0=\phi^{-1}(Y_0)=\phi^{-1}(\Spec(D))=\Spec(R)$, while $Z_\alpha=\emptyset$ whenever $Y_\alpha$ is empty; moreover, $\{Z_\alpha\}$ is clearly a descending chain, and if $\alpha$ is a limit ordinal then
\begin{equation*}
Z_\alpha=\phi^{-1}(Y_\alpha)=\phi^{-1}\left(\bigcap_{\beta<\alpha}Y_\beta\right)=\bigcap_{\beta<\alpha}\phi^{-1}(Y_\beta)=\bigcap_{\beta<\alpha}Z_\beta
\end{equation*}
since $\{Y_\alpha\}$ is a descending chain.

The restriction map $\phi$ is continuous with respect to the inverse topology, and thus each $Z_\alpha$ is closed since each $Y_\alpha$ is closed. If now $P\in Z_\alpha\setminus Z_{\alpha+1}$ is not maximal in $Z_\alpha$, there is a $Q\in Z_\alpha$ such that $P\subsetneq Q$, and thus $\phi(P)\subseteq\phi(Q)$; since $\phi(P)\in Y_\alpha\setminus Y_{\alpha+1}$, we must have $\phi(P)=\phi(Q)$. However, this case is impossible since $\phi^{-1}(\phi(P))$ is zero-dimensional. Hence $\{Z_\alpha\}_\alpha$ be a sequence of layers for $\Spec(R)$, as claimed.
\end{proof}

\begin{cor}
Let $D$ be an integral domain and $R$ an integral extension of $D$. If $\Spec(D)$ is compactly layered, so is $\Spec(R)$.
\end{cor}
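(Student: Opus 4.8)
The plan is to deduce this from Proposition~\ref{prop:fiber0}. Let $\phi\colon\Spec(R)\longrightarrow\Spec(D)$, $P\mapsto P\cap D$, be the canonical restriction map. The key observation I would establish is that, because $R$ is integral over $D$, every fiber of $\phi$ is $0$-dimensional: for $\mathfrak p\in\Spec(D)$ the fiber $\phi^{-1}(\mathfrak p)$ is homeomorphic to $\Spec\bigl(R\otimes_D\kappa(\mathfrak p)\bigr)$, where $\kappa(\mathfrak p)$ is the residue field of $D$ at $\mathfrak p$, and $R\otimes_D\kappa(\mathfrak p)$ is integral over the field $\kappa(\mathfrak p)$, hence of Krull dimension $0$. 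Equivalently, one may invoke directly the incomparability property of integral extensions, which says precisely that no two distinct primes of $R$ lying over the same prime of $D$ are comparable; this is exactly the hypothesis that Proposition~\ref{prop:fiber0} needs.

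Once this is in place, Proposition~\ref{prop:fiber0} applies immediately and yields that $\Spec(R)$ is compactly layered. The only point that calls for a (minor) comment is that Proposition~\ref{prop:fiber0} is phrased for an inclusion $D\subseteq R$ of domains, whereas an integral ring extension need not be a domain; however, its proof uses only the restriction map $\phi$ and its continuity for the inverse topology, so it carries over verbatim to an arbitrary integral extension. I therefore do not anticipate any genuine obstacle here: the whole content lies in the $0$-dimensionality of the fibers of $\phi$, which is classical, and everything else is a direct citation of Proposition~\ref{prop:fiber0}.
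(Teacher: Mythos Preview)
Your proposal is correct and follows the same approach as the paper: invoke the incomparability property of integral extensions to conclude that the fibers of the restriction map are $0$-dimensional, then apply Proposition~\ref{prop:fiber0}. Your extra remark about Proposition~\ref{prop:fiber0} being stated for domains while $R$ need not be one is a fair observation, though not strictly required for the argument as stated in the paper.
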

\begin{proof}
By the incomparability property, the fibers of the restriction map of an integral extension are $0$-dimensional. Apply Proposition \ref{prop:fiber0}.
\end{proof}

\begin{cor}\label{cor:CSD-overring}
Let $D$ be a Pr\"ufer domain and $T$ an overring of $D$. Then, the following hold. 
\begin{enumerate}[(a)]
\item If $\Spec(D)$ is compactly layered, then so is $\Spec(T)$.
\item If $D$ is a CSD-domain, then so is $T$.
\end{enumerate}
\end{cor}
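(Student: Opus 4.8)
The plan is to deduce both statements from Proposition \ref{prop:fiber0} together with two classical facts about overrings of Pr\"ufer domains: an overring of a Pr\"ufer domain is again a Pr\"ufer domain, and a Pr\"ufer domain admits a unique valuation overring with a prescribed center (see \cite[Chapter IV]{gilmer}).

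For part (a), I would examine the restriction map $\phi\colon\Spec(T)\longrightarrow\Spec(D)$, $Q\mapsto Q\cap D$, and show that its fibers are $0$-dimensional — in fact that $\phi$ is injective. Given $Q\in\Spec(T)$ with $P:=Q\cap D$, the localization $T_Q$ is a valuation overring of $D$ whose maximal ideal contracts to $P$; by uniqueness of the valuation overring of $D$ centered at $P$ one gets $T_Q=D_P$, so $Q=PD_P\cap T$ is determined by $P$ alone. Hence each fiber of $\phi$ is empty or a singleton, in particular $0$-dimensional, and Proposition \ref{prop:fiber0} gives that $\Spec(T)$ is compactly layered whenever $\Spec(D)$ is.

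For part (b), since $D$ is a CSD-domain it is a strongly discrete Pr\"ufer domain with compactly layered spectrum; then $T$ is Pr\"ufer (being an overring of a Pr\"ufer domain), and $\Spec(T)$ is compactly layered by part (a), so it only remains to check that $T$ is strongly discrete. For a nonzero $Q\in\Spec(T)$, put $P:=Q\cap D\neq(0)$; as in part (a) one has $T_Q=D_P$, hence $QT_Q=PD_P$, which is principal because $D$ is strongly discrete. Thus $QT_Q$ is principal for every nonzero prime $Q$ of $T$, so $T$ is strongly discrete, and therefore a CSD-domain.

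I do not expect a genuine obstacle here: the argument is a short combination of Proposition \ref{prop:fiber0} with the standard behaviour of primes and localizations under passage to an overring of a Pr\"ufer domain. The only point that needs to be stated carefully is the identification $T_Q=D_{Q\cap D}$, which simultaneously delivers the injectivity of $\phi$ needed in (a) and the principality of $QT_Q$ needed in (b).
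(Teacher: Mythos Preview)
Your proof is correct and follows essentially the same route as the paper: both arguments invoke Proposition~\ref{prop:fiber0} after observing that the restriction map $\phi$ has fibers that are at most singletons, since $\phi^{-1}(P)$ can only contain $PD_P\cap T$. The only minor difference is that for part~(b) the paper simply cites the standard fact that an overring of a strongly discrete Pr\"ufer domain is strongly discrete, whereas you re-derive it from the identification $T_Q=D_{Q\cap D}$; this is the same content, just made explicit.
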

\begin{proof}
Under the hypothesis on $D$ and $T$, if $\phi:\Spec(T)\longrightarrow\Spec(D)$ is the restriction map then $\phi^{-1}(P)$ is a singleton, since it only contains $PD_P\cap T$. The first claim follows from Proposition \ref{prop:fiber0}. The second claim follows by joining the previous result with the fact that an overring of a strongly discrete Pr\"ufer domain is strongly discrete.
\end{proof}

\medskip

To conclude the paper, we now give a detailed look at one interesting class of examples, namely rings of integer valued polynomials. Let $D$ be an integral domain with quotient field $K$, and let $E\subseteq K$. The \emph{ring of integer-valued polynomials} on $E$ over $D$ is
\begin{equation*}
\Int(E,D):=\{f\in K[X]\mid f(E)\subseteq D\};
\end{equation*}
we set $\Int(D):=\Int(D,D)$. This construction has many interesting properties and has been studied very deeply (see \cite{intD}); indeed, the method of using $\Inv_X(D)$ that we used in this paper, can be seen as a wide generalization of the study of $\Inv(\Int(D))$ carried out by first considering unitary ideals, where an ideal $I$ of $\Int(D)$ is \emph{unitary} if $I\cap D\neq(0)$.

Let $V$ be a discrete valuation with finite residue field, let $\mm$ be its maximal ideal, $K$ its quotient field and $v$ be its valuation. Then, $\Int(V)$ is a two-dimensional Pr\"ufer domain \cite[Proposition V.1.8, Lemma VI.1.4]{intD}, and it is strongly discrete: indeed, let $P\in\Int(V)$. If $P\cap D=(0)$, then $P=q(X)K[X]\cap\Int(V)$ for some irreducible polynomial $q(X)$ of $K[X]$, and thus $\Int(V)_P=K[X]_{(q(X))}$ is a DVR \cite[Corollary V.1.2]{intD}. If $P\cap D=\mm$, then
\begin{equation*}
P=\mathfrak{M}_\alpha:=\{f\in\Int(D)\mid f(\alpha)\in\widehat{V}\}.
\end{equation*}
for some $\alpha\in\widehat{V}$, where $\widehat{V}$ is the completion of $V$ with respect to the $\mm$-adic topology \cite[Proposition V.2.2]{intD}; in this case, $\Int(D)_P$ is s strongly discrete valuation domain since it is either a DVR or a two-dimensional valuation ring with value group $\insZ\times\insZ$, and $w_P(f)=v(f(\alpha))$ \cite[Proposition VI.1.9 and previous discussion]{intD}. Thus $\Int(V)$ is strongly discrete.

Let now $\mathcal{X}$ be the set of unitary maximal ideals. Then, $\mathcal{X}$ is open in the inverse topology, since it is equal to $\V(\mm\Int(V))$; it is also easy to see that $\Omega(\mathcal{X})=K[X]$. The assignment $\alpha\mapsto\mathfrak{M}_\alpha$, from $\widehat{V}$ to $\mathcal{X}$, is not only a bijection \cite[Theorem V.2.10]{intD}, but also a homeomorphism from the inverse topology (which coincides with the Zariski topology on $\mathcal{X}$) to the $\mm$-adic topology \cite[Chapter V, Exercise 7]{intD}. Since $\mm\Int(V)$ is radical, moreover, $\inscrit_\mathcal{X}(D)=\emptyset$, and thus unitary ideals have radical factorization.

Therefore, $\Spec(\Int(V))$ admits a sequence of layers
\begin{equation*}
\begin{aligned}
Y_0 & =\Spec(\Int(V)),\\
Y_1 & =\{P\in\Spec(\Int(V))\mid P\cap D=(0)\}=\Spec(D)\setminus \mathcal{X},\\
Y_2 & =\{(0)\},\\
Y_3 & =\emptyset.
\end{aligned}
\end{equation*}
and so $\Int(V)$ is a CSD-domain.

A fractional ideal $I$ is \emph{almost unitary} if $dI$ is unitary for some $d\in V$, $d\neq 0$: this condition is equivalent to $\supp(I)\subseteq\mathcal{X}$, and thus the subgroups $\Inv^u(\Int(V))$ and $\Div^u(\Int(V))$ of (respectively) the almost unitary invertible and divisorial ideals coincides with $\Inv_\mathcal{X}(D)$ and $\Div_\mathcal{X}(D)$. Moreover, if $I$ is almost unitary, the ideal function $\nu_I$ on $\mathcal{X}$ coincides with the ``classical'' ideal function sending the maximal ideal $\mathfrak{M}_\alpha$ to $\min\{\widehat{v}(f(\alpha))\mid f\in I\}$ (where $\widehat{v}$ denotes the valuation on $\widehat{V}$).

By Theorems \ref{teor:Invd} and \ref{teor:CSD-free}, we have the following.
\begin{teor}
Let $V$ be a discrete valuation ring with finite residue field, let $K$ be its residue field and $\widehat{V}$ be its completion. Let $\mathcal{X}$ be the set of maximal unitary ideals of $\Int(V)$. Then, $\Int(V)$ is a CSD-domain; moreover,
\begin{equation*}
\Inv(\Int(V))\simeq\inscont(\widehat{V},\insZ)\oplus\Inv(K[X]).
\end{equation*}
and
\begin{equation*}
\Div_\mathcal{X}(\Int(V))\simeq\inscont(E_{\widehat{V}},\insZ).
\end{equation*}
\end{teor}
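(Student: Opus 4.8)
The plan is to assemble the statement from the pieces already established in the paper. For the first claim, that $\Int(V)$ is a CSD-domain, I would simply invoke the discussion immediately preceding the theorem: $\Int(V)$ is a strongly discrete Pr\"ufer domain (shown via the explicit description of its localizations at unitary and non-unitary primes), and $\Spec(\Int(V))$ is compactly layered with the exhibited sequence of layers $Y_0 \supseteq Y_1 \supseteq Y_2 \supseteq Y_3 = \emptyset$. Hence by definition $\Int(V)$ is a CSD-domain and, by Theorem~\ref{teor:CSD-free}, $\Inv(\Int(V))$ is free; more precisely, the decomposition from the proof of Theorem~\ref{teor:CSD-free} gives $\Inv(\Int(V)) \simeq \bigoplus_{\alpha<\beta} \Inv_{X_\alpha}(\Omega(Z_\alpha))$, where $X_\alpha = Y_\alpha \setminus Y_{\alpha+1}$ and $Z_\alpha = \Spec(\Int(V)) \setminus Y_\alpha$.

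For the concrete formula for $\Inv(\Int(V))$, I would unwind this direct sum for the three-step layer sequence. The layer $X_0 = Y_0 \setminus Y_1 = \mathcal{X}$ is the set of unitary maximal ideals, and $\Omega(Z_0) = \Omega(\emptyset)$'s relevant Nagata transform at this stage is just $\Int(V)$ itself, so the first summand is $\Inv_{\mathcal{X}}(\Int(V))$. By the preceding discussion, $\mathcal{X}$ is a discrete splitting set with $\inscrit_{\mathcal{X}}(\Int(V)) = \emptyset$ (because $\mm\Int(V)$ is radical and finitely generated, so no unitary maximal ideal is critical), hence by Corollary~\ref{cor:invX-sp}, $\Inv_{\mathcal{X}}(\Int(V)) \simeq \inscont_c(\mathcal{X},\insZ)$. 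Now $\mathcal{X} = \V(\mm\Int(V))$ is compact in the inverse topology, so every continuous function on it has compact support and $\inscont_c(\mathcal{X},\insZ) = \inscont(\mathcal{X},\insZ)$; using the homeomorphism $\mathcal{X} \simeq \widehat{V}$ (with its $\mm$-adic topology), this gives $\inscont(\widehat{V},\insZ)$. The second summand corresponds to $\Omega(Z_1) = \Omega(\Spec(\Int(V)) \setminus Y_1) = K[X]$ (noted explicitly in the text), together with the remaining layers $Y_1 \setminus Y_2 = \{P : P\cap D = (0)\} \setminus \{(0)\}$ and $Y_2 \setminus Y_3 = \{(0)\}$: these reassemble, via the exact sequence of Theorem~\ref{teor:CSD-free} applied with starting ring $K[X]$, precisely into $\Inv(K[X])$. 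Putting the two summands together yields $\Inv(\Int(V)) \simeq \inscont(\widehat{V},\insZ) \oplus \Inv(K[X])$.

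For the last isomorphism, $\Div_{\mathcal{X}}(\Int(V)) \simeq \inscont(E_{\widehat{V}},\insZ)$, I would apply Proposition~\ref{prop:Div-SP}: since $\mathcal{X} \subseteq \Max(\Int(V))$ is a discrete splitting set with $\inscrit_{\mathcal{X}}(\Int(V)) = \emptyset$, that proposition gives $\Div_{\mathcal{X}}(\Int(V)) \simeq \inscont_c(E_{\mathcal{X}},\insZ)$, where $E_{\mathcal{X}}$ is the Gleason cover of $\mathcal{X}$. Again compactness of $\mathcal{X}$ (hence of $E_{\mathcal{X}}$) collapses $\inscont_c$ to $\inscont$, and the homeomorphism $\mathcal{X} \simeq \widehat{V}$ identifies $E_{\mathcal{X}}$ with $E_{\widehat{V}}$, giving the stated formula.

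The only genuine subtlety — the step I would be most careful about — is the identification of the ``tail'' summands with $\Inv(K[X])$: one must check that the subsequence $\{Y_\alpha\}_{\alpha \geq 1}$ is a valid sequence of layers for $\Omega(Z_1) = K[X]$ (which is Proposition~\ref{prop:complay}) and that the direct-sum decomposition of Theorem~\ref{teor:CSD-free}, when truncated, splits off the $\mathcal{X}$-part cleanly as a direct summand with complementary summand exactly $\Inv(K[X])$. This is precisely the content of the exact sequence~\eqref{eq:succ-invXalpha} at $\gamma = 1$ together with the splitting argument already carried out in that proof, so no new work is needed; but it is worth stating explicitly rather than leaving to the reader. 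Everything else is a matter of quoting Corollary~\ref{cor:invX-sp}, Proposition~\ref{prop:Div-SP}, and the elementary fact that compact spaces make $\inscont_c$ and $\inscont$ coincide.
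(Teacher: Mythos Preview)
Your proposal is correct and follows essentially the same approach as the paper, which simply states that the theorem follows from Theorems~\ref{teor:Invd} and~\ref{teor:CSD-free} together with the preceding discussion; you have merely spelled out the assembly of the pieces (Corollary~\ref{cor:invX-sp} for $\Inv_{\mathcal{X}}$, Proposition~\ref{prop:kerext} for the quotient $\Inv(K[X])$, and Proposition~\ref{prop:Div-SP} for $\Div_{\mathcal{X}}$) in more detail than the paper does.
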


Let now $D$ be an almost Dedekind domain with finite residue fields. It is not always true that $\Int(D)$ is a Pr\"ufer domain; however, this holds if $\Int(D)$ \emph{behaves well under localization}, i.e., if $\Int(D)_P=\Int(D_P)$ for every maximal ideal $P$ of $D$ \cite[Proposition VI.1.6]{intD}; this in particular holds if $D$ is a Dedekind domain \cite[Proposition I.2.7]{intD}. To state the result in full generality, recall that a \emph{fractional subset} of a domain $D$ is a subset $F$ of its quotient field such that $dF\subseteq D$ for some $d\in D$, $d\neq 0$. Recall also that an \emph{SP-domain} is an integral domain where every ideal has radical factorization.
\begin{teor}\label{teor:intD-ad}
Let $D$ be an almost Dedekind domain with finite residue fields, let $K$ be its residue field, $F\subseteq K$ be a fractional subset and $\mathcal{X}$ the set of unitary prime ideals of $D$. Suppose that $\Int(D)$ behaves well under localization. Then, the following hold.
\begin{enumerate}[(a)]
\item $\Int(F,D)$ is a CSD-domain.
\item If $D$ is an SP-domain, then
\begin{equation*}
\Inv(\Int(F,D))\simeq\inscont(\mathcal{X},\insZ)\oplus\Inv(K[X])
\end{equation*}
and
\begin{equation*}
\Div^u(\Int(F,D))\simeq\inscont(E_\mathcal{X},\insZ).
\end{equation*}
\item If $D$ is a Dedekind domain, then
\begin{equation*}
\Inv(\Int(E,D))\simeq\left(\bigoplus_{Q\in\Max(D)}\inscont(\widehat{F_Q},\insZ)\right)\oplus\Inv(K[X])
\end{equation*}
and
\begin{equation*}
\Div^u(\Int(F,D))\simeq\bigoplus_{Q\in\Max(D)}\inscont(E_{\widehat{F_Q}},\insZ),
\end{equation*}
where $\widehat{F_Q}$ is the completion of $F$ with respect to the $QD_Q$-adic topology.
\end{enumerate}
\end{teor}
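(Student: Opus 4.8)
The plan is to obtain all three statements from the structure theory developed in Sections~6--9, feeding in only the known local description of rings of integer-valued polynomials over a DVR with finite residue field, just as was done above for $\Int(V)$. For (a), I would first check that $\Int(F,D)$ is a strongly discrete Pr\"ufer domain: it is Pr\"ufer because $\Int(D)$ \emph{behaves well under localization} and $D$ is almost Dedekind (see \cite{intD}), and it is strongly discrete prime by prime, since for $P\in\Spec(\Int(F,D))$ with $\mathfrak q:=P\cap D$ one has $\Int(F,D)_P=\Int(F_{\mathfrak q},D_{\mathfrak q})_{P}$, a localization of a ring of the type already analysed for $\Int(V)$. Then I would exhibit the sequence of layers
\begin{equation*}
Y_0=\Spec(\Int(F,D)),\qquad Y_1=\{P\mid P\cap D=(0)\},\qquad Y_2=\{(0)\},\qquad Y_3=\emptyset,
\end{equation*}
checking that $Y_1$ is closed by generization and, being homeomorphic by contraction to $\Spec(\Omega(\mathcal{X}))=\Spec(K[X])$ (a Noetherian space), is compact in the Zariski topology, hence inverse-closed; that $Y_0\setminus Y_1=\mathcal{X}$ is contained in $\Max(\Int(F,D))$, because the local analysis shows that every unitary prime is maximal; and that $Y_1\setminus Y_2$ corresponds to $\Max(K[X])$ while $Y_2\setminus Y_3=\{(0)\}$ is the only prime of the residual ring $K(X)$. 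Thus $\Int(F,D)$ is a CSD-domain.

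Applying Theorem~\ref{teor:CSD-free} to this sequence of layers gives $\Inv(\Int(F,D))\simeq\bigoplus_{\alpha<3}\Inv_{X_\alpha}(\Omega(Z_\alpha))$ with $X_\alpha=Y_\alpha\setminus Y_{\alpha+1}$ and $Z_\alpha=\Spec(\Int(F,D))\setminus Y_\alpha$. The term $\alpha=0$ is $\Inv_{\mathcal{X}}(\Int(F,D))$, since $\Omega(Z_0)=\Int(F,D)$; the term $\alpha=1$ is $\Inv_{\Max(K[X])}(K[X])=\Inv(K[X])$, because every invertible fractional ideal of the PID $K[X]$ has support inside $\Max(K[X])$; and the term $\alpha=2$ is $\Inv(K(X))=0$. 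Hence
\begin{equation*}
\Inv(\Int(F,D))\simeq\Inv_{\mathcal{X}}(\Int(F,D))\oplus\Inv(K[X]),
\end{equation*}
which is the shape of the first isomorphism in (b) and (c), and everything is reduced to describing $\Inv_{\mathcal{X}}(\Int(F,D))$ and $\Div^u(\Int(F,D))=\Div_{\mathcal{X}}(\Int(F,D))$.

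Assume now that $D$ is an SP-domain. Then $\inscrit_{\Max(D)}(D)=\emptyset$ (Proposition~\ref{prop:caratt-VI-crit}), so each $\mathfrak q\in\Max(D)$ contains a finitely generated radical ideal $J_{\mathfrak q}$ of $D$, whose extension $J_{\mathfrak q}\Int(F,D)$ is finitely generated, has support inside $\mathcal{X}$, lies in every unitary maximal ideal over $\mathfrak q$, and is radical --- the last point reducing, after localizing at $\mathfrak q$ and using that $\Int(D)$ behaves well under localization, to the fact that the extension to $\Int$ of a radical ideal of a DVR with finite residue field is radical (\cite{intD}). Hence $\inscrit_{\mathcal{X}}(\Int(F,D))=\emptyset$, so Corollary~\ref{cor:invX-sp} gives $\Inv_{\mathcal{X}}(\Int(F,D))\simeq\inscont_c(\mathcal{X},\insZ)$ and Proposition~\ref{prop:Div-SP} gives $\Div^u(\Int(F,D))\simeq\inscont_c(E_{\mathcal{X}},\insZ)$, which together with the previous paragraph proves (b). For (c), a Dedekind domain behaves well under localization and is an SP-domain, so it remains only to refine the function groups: with $\mathcal{X}_Q:=\{P\in\mathcal{X}\mid P\cap D=Q\}$ one has $\mathcal{X}=\bigsqcup_{Q\in\Max(D)}\mathcal{X}_Q$ with each $\mathcal{X}_Q=\V(Q\Int(F,D))$ clopen in the inverse topology (since $Q$ is finitely generated), and the local picture at $Q$ --- passing to $D_Q$ and then to $\widehat{D_Q}$ --- identifies $\mathcal{X}_Q$ homeomorphically with the compact space $\widehat{F_Q}$. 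As $\mathcal{X}$ is then a disjoint union of compact clopen pieces, $\inscont_c(\mathcal{X},\insZ)\simeq\bigoplus_Q\inscont(\widehat{F_Q},\insZ)$, and similarly $E_{\mathcal{X}}=\bigsqcup_Q E_{\mathcal{X}_Q}$ gives $\inscont_c(E_{\mathcal{X}},\insZ)\simeq\bigoplus_Q\inscont(E_{\widehat{F_Q}},\insZ)$; substituting into (b) yields (c).

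I expect the main obstacle to be part (a), and specifically the verification that the displayed $\{Y_i\}$ genuinely is a sequence of layers: this rests on the two facts that every unitary prime of $\Int(F,D)$ is maximal and that $Y_1$ is inverse-closed, both of which must be derived from the DVR case via the hypothesis that $\Int(D)$ behaves well under localization. A secondary delicate point is the radicality of $J_{\mathfrak q}\Int(F,D)$ invoked in (b); making this precise would require a careful citation for the behaviour of radical ideals of a DVR with finite residue field under the $\Int$ construction over an arbitrary fractional subset.
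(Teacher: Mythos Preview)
Your proposal is correct and follows essentially the same overall architecture as the paper's proof, but the paper streamlines two of the points you flag as delicate.

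First, for part (a) the paper does not verify the sequence of layers directly for an arbitrary fractional subset $F$. Instead it treats only the case $F=D$, and then observes that for general $F$ one has $\Int(F,D)\simeq\Int(dF,D)$ with $dF\subseteq D$, so that $\Int(F,D)$ is (isomorphic to) an \emph{overring} of $\Int(D)$; Corollary~\ref{cor:CSD-overring} then gives that $\Int(F,D)$ is a CSD-domain for free. This sidesteps your concern about redoing the local analysis over $F$. Second, for the inverse-closedness of $Y_1$ (equivalently, the inverse-openness of $\mathcal{X}$), the paper does not go through the identification with $\Spec(K[X])$ and Noetherianity; it simply writes $\mathcal{X}=\bigcup_{d\in D\setminus\{0\}}\V(d\cdot\Int(D))$, a union of basic inverse-open sets. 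This dissolves what you called the ``main obstacle''. Your worry about the radicality of $J_{\mathfrak q}\Int(F,D)$ in (b) is handled the same way you suggest: one reduces, via good behaviour under localization, to the DVR case, where $\mm\Int(V)$ is radical; the paper states this without further comment.
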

\begin{proof}
Suppose first that $F=D$. By \cite[Proposition VI.1.6]{intD}, $\Int(D)$ is a Pr\"ufer domain; let $P$ be a prime ideal. If $P\cap D=(0)$, then as above $\Int(V)_P=K[X]_{(q(X))}$ is a DVR, where $q(X)\in K[X]$ is irreducible \cite[Corollary V.1.2]{intD}; if $P\cap D=Q\neq(0)$, then $P\Int(D)_Q=P\Int(D_Q)$ is a unitary prime ideal of $\Int(D_Q)$, and thus $\Int(D)_P=\Int(D_Q)_{PD_Q}$ is strongly discrete. Hence $\Int(D)$ is strongly discrete. Moreover, $\mathcal{X}=\bigcup\V(d\cdot\Int(D))$, as $d$ ranges among the nonzero elements of $D$, and thus $\mathcal{X}$ is open, with respect to the inverse topology; it follows that 
\begin{equation*}
\begin{aligned}
Y_0 & =\Spec(\Int(D)),\\
Y_1 & =\{P\in\Spec(\Int(D))\mid P\cap D=(0)\}=\Spec(D)\setminus \mathcal{X},\\
Y_2 & =\{(0)\},\\
Y_3 & =\emptyset.
\end{aligned}
\end{equation*}
is a sequence of layers for $\Int(D)$. Thus $\Int(D)$ is a CSD-domain.

If $D$ is an SP-domain and $P\in \mathcal{X}$, then $P\cap D$ contains a radical finitely generated ideal $I$; thus, $I\cdot\Int(D)$ is a radical finitely generated ideal contained in $P$, and so $\inscrit_\mathcal{X}(D)=\emptyset$. The claim follows from Theorems \ref{teor:Invd} and \ref{teor:CSD-free}.

By the previous reasoning, $\mathcal{X}$ is always the set-theoretical union of the sets $\mathcal{X}_Q:=\{P\in \mathcal{X}\mid P\cap D=Q\}$, which is homeomorphic to the space of unitary prime ideals of $\Int(D_Q)$, i.e., to $\widehat{D_Q}$. (Note that the notation is not ambiguous, as $D$ is dense in the completion of $D_Q$, with respect to the $QD_Q$-adic topology.) If $D$ is a Dedekind domain, moreover, each $\mathcal{X}_Q$ is clopen in the inverse topology, since is is equal to $\V(Q\Int(D))$ and $Q$ is finitely generated; therefore, $\mathcal{X}$ is homeomorphic to the disjoint union of the spaces $\widehat{D_Q}$, and so
\begin{equation*}
\inscont(\mathcal{X},\insZ)\simeq\bigoplus_{Q\in\Max(D)}\inscont(\widehat{D_Q},\insZ).
\end{equation*}
The claim about $\Inv(D)$ now follows from the previous point, while the one about $\Div_\mathcal{X}(D)$ from Proposition \ref{prop:Div-SP} and the fact that $\inscrit_\mathcal{X}(D)=\emptyset$.

Suppose now that $F\subseteq K$ is any fractional subset, and suppose that $dF\subseteq K$, $d\neq 0$. Then, $\Int(F,D)\simeq\Int(dF,D)$ \cite[Remark I.1.11]{intD} and $\Int(dF,D)$ is an overring of $D$ since $dF\subseteq D$ \cite[Proposition I.1.6]{intD}. Hence, $\Int(F,D)$ is a CSD-domain (Corollary \ref{cor:CSD-overring}), and the other claims follow from the fact that, if $D=V$ is a valuation domain, then the space of unitary prime ideals of $\Int(F,V)$ is homeomorphic to $\widehat{F}$ \cite[Proposition 5.2.2]{intD}.
\end{proof}

\bibliographystyle{plain}
\bibliography{/bib/articoli,/bib/libri,/bib/miei}
\end{document}